\newtheorem{theorem}{Theorem}[section]
\newtheorem{proposition}[theorem]{Proposition}
\newtheorem{lemma}[theorem]{Lemma}
\newtheorem{corollary}[theorem]{Corollary}
\theoremstyle{remark}
\newtheorem{remark}[theorem]{Remark}
\newtheorem{notation}[theorem]{Notation}
\newtheorem{definition}[theorem]{Definition}
\numberwithin{equation}{section}
\begin{document}

\title[Non-orientable fundamental surfaces in lens spaces] {
Non-orientable fundamental surfaces in lens spaces}

\author{Miwa Iwakura}

\date{April 20, 2009}


\begin{abstract}
 We consider non-orientable closed surfaces of minimum crosscap number 
in the $(p,q)$-lens space $L(p,q) \cong V_1 \cup_{\partial} V_2$,
where $V_1$ and $V_2$ are solid tori.
 Bredon and Wood gave a formula for calculating the minimum crosscap number.
 Rubinstein showed that
$L(p,q)$ with $p$ even has only one isotopy class of such surfaces,
and it is represented by a surface in a standard form,
which is constructed from a meridian disk in $V_1$
by performing a finite number of band sum operations in $V_1$
and capping off the resulting boundary circle by a meridian disk of $V_2$.
 We show that the standard form corresponds 
to an edge-path $\lambda$ 
in a certain tree graph in the closure of the hyperbolic upper half plane.
 Let $0=p_0/q_0,\ p_1/q_1,\ \cdots,\ p_k/q_k = p/q$ be the labels of vertices
which $\lambda$ passes.
 Then the slope of the boundary circle
of the surface right after the $i$-th band sum is $(p_i, q_i)$.
 The number of edges of $\lambda$ is equal to the minimum crosscap number.
 We give an easy way of calculating $p_i / q_i$
using a certain continued fraction expansion of $p/q$.
\\
{\it Mathematics Subject Classification 2000:}$\ $ 57N10.\\
{\it Keywords:}$\ $
non-orientable surface, minimum crosscap number, lens space, 
geometrically incompressible, band sum, edge-path
\end{abstract}

\maketitle

\section{Introduction}

 A solid torus $V$ is homeomorphic to $D^2 \times S^1$,
where $D^2$ is the $2$-dimensional disk and $S^1$ the $1$-dimensional sphere.
 A circle on the boundary torus $\partial V \cong (\partial D^2) \times S^1$ 
is of {\it $(p,q)$-slope} (or $p/q$-slope)
if it is isotopic to the circle given by the expression
\newline
$( (\cos 2\pi q \theta, \sin 2\pi q \theta), 
   (\cos 2\pi p \theta, \sin 2\pi p \theta) )$
where $p$ and $q$ are coprime integers
and $\theta$ is a parameter with $0 \le \theta \le 1$.
 We call a circle of $(1,0)$-slope a {\it longitude}
and that of $(0,1)$-slope a {\it meridian}.
 In general, a circle in a surface is said to be {\it essential}
if it does not bound a disk in the surface.
 As is well-known,
in the boundary torus $\partial V$,
any essential circle is of $(p,q)$-slope for some coprime integers $p, q$.

 For a pair of coprime integers $p$ and $q$ with $p \ge 2$,
the {\it $(p,q)$-lens space} $L(p,q)$ 
is obtained from two solid tori $V_1$ and $V_2$
by gluing their boundary tori by a homeomorphism 
which maps the meridian circle on $\partial V_2$ 
to a circle of $(p,q)$-slope on $\partial V_1$.
 Throughout this paper,
we regard $L(p,q)$ as the union of $V_1$ and $V_2$ as above.
 Since $L(p,q) \cong L(p,-q)$ and $L(p,q) \cong L(p, q+p)$,
it is enough for establishing a general result for $(p,q)$-lens spaces
to consider $L(p,q)$ with $1 \le q \le 2/p$.

 It is well-known that $L(p,q)$ contains a non-orientable closed surface
if and only if $p$ is even.
 This holds because
$H_2 (L(p,q); {\Bbb Z}_2) = {\Bbb Z}_2$ when $p$ is even,
$H_2 (L(p,q); {\Bbb Z}_2) = 0$ when $p$ is odd,
and $H_2 (L(p,q);{\Bbb Z}) = 0$ for any integer $p \ge 2$.
 In \cite{BW}, Bredon and Wood gave a formula 
for calculating the minimum crosscap number ${\rm Cr}(p,q)$ 
among those of all non-orientable connected closed surfaces in $L(p,q)$.
 The formula is based on a continued fraction expansion of $p/q$.

\begin{notation}
 For a finite sequence of real numbers $a_0, a_1, \cdots, a_n$, 
we let 
\newline
$[a_0,a_1,\cdots,a_{n-1},a_n]$ 
denote 
$a_0+\dfrac{1}{
\begin{array}{c}
a_1 + \\
 \\
 \\
\end{array}
\begin{array}{c}
\ddots
 \\
 \\
\end{array}
\begin{array}{c} 
\\
+\dfrac{1}{a_{n-1} + \dfrac{1}{a_n}} \\
\end{array}
}
\in {\mathbb R} \cup \{ \infty \}$,
where $r/0 = \infty$, $r / \infty = 0$
and $\infty + r = \infty$ for any real number $r$,
\end{notation}

\begin{definition}
 Let $r$ be a positive rational number.
 A continued fraction expansion
$r = [a_0, a_1, \cdots, a_n]$ is said to be {\it standard}
if $a_0$ is a non-negative integer, 
$a_i$ is a natural number for $i=1, 2, \cdots, n$
and $a_n \ge 2$.
 By considering Euclidiean method of mutual division,
it is easily seen that such an expression is unique for $r$.
\end{definition}

\begin{theorem} {\rm (Bredon and Wood, (6.1) Theorem in \cite{BW})}
 Let $p,q$ be coprime natural numbers with $p \ge 2$,
and $p/q = [a_0, a_1, \cdots, a_n]$
the standard continued fraction expansion.

 Then the minimum crosscap number is calculated by 
${\rm Cr}(p,q)= \sum_{i=0}^n b_i/2$,
\newline
where 
$b_0 = a_0$, and 
$b_i = 
\left\{ \begin{array}{l}
a_i 
\ ({\rm when}\ b_{i-1} \ne a_{i-1} \ 
{\rm or} \ \sum_{j=0}^{i-1} b_j \ {\rm is \ odd})\\ 
0 \ \ ({\rm otherwise}) \\
\end{array} \right.$
\end{theorem}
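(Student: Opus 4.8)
\medskip

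\noindent\textbf{Proof strategy.}
The plan is to reduce the formula to a combinatorial count of band sums in the solid torus $V_1$, and then to run a recursion on the continued fraction of $p/q$, where the delicate point is a parity phenomenon that makes the naive count too large.

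\emph{Reduction to standard form.} A closed non-orientable surface $S\subset L(p,q)$ of minimum crosscap number may be taken geometrically incompressible: a compression would produce a closed surface $S'$ with $[S']=[S]\neq 0$ in $H_2(L(p,q);\mathbb{Z}_2)$, hence again non-orientable --- since $H_2(L(p,q);\mathbb{Z})=0$ forces every closed orientable surface in $L(p,q)$ to be null-homologous, so separating --- and of strictly larger Euler characteristic, contradicting minimality (any $2$-sphere summand is discarded). Put $S$ in minimal position with respect to the Heegaard torus $T=\partial V_1=\partial V_2$, so that $S\cap V_1$ and $S\cap V_2$ are incompressible in the respective solid tori, and perform the usual innermost-disk and outermost-arc reductions. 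Since a non-meridional curve bounds no disk in a solid torus, one is forced onto the standard form described in the introduction: $S\cap V_2$ is one meridian disk of $V_2$, so $S\cap T$ is a single circle of slope $(p,q)$ on $\partial V_1$, and $S\cap V_1$ is isotopic in $V_1$ to a meridian disk of $V_1$ with some number $k$ of bands attached. As a non-orientable disk with $k$ bands is a once-punctured $N_k$, its crosscap number is exactly $k$; hence $\mathrm{Cr}(p,q)$ is the least number of band sums needed, inside $V_1$, to pass from a meridian disk of $V_1$ to an embedded incompressible non-orientable surface whose single boundary circle has slope $(p,q)$.

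\emph{The recursion.} Next I would record how one band sum changes the boundary slope. A band attached to a circle $\gamma$ of slope $(p,q)$ so as to keep a single boundary circle runs around a loop $\delta$ in $V_1$, of some slope $s/t$, and the new boundary circle is homologous on $\partial V_1$ to $\gamma\pm 2\delta$, i.e.\ has slope $(p\pm 2s)/(q\pm 2t)$; embeddedness off the existing bands, together with incompressibility, restricts $s/t$ to a Farey neighbour of $(p,q)$, and taking $s/t$ to be the penultimate convergent of $p/q=[a_0,\dots,a_n]$ one lands on the slope $[a_0,\dots,a_{n-1},a_n-2]$. This gives the upper bound $\mathrm{Cr}(p,q)\le \mathrm{Cr}\bigl([a_0,\dots,a_{n-1},a_n-2]\bigr)+1$ (with the obvious re-interpretation when the last entry drops to $1$ or $0$), and iterating it down the continued fraction --- together with easy base cases such as $\mathrm{Cr}(2,1)=1$ --- produces an explicit non-orientable surface. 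The content of the theorem is that this naive count can be improved: whenever the previous ``block'' of the continued fraction was used up completely ($b_{i-1}=a_{i-1}$) and the number of crosscaps spent so far is even ($\sum_{j<i}b_j$ even), the $i$-th block can be realized by a sharper construction at no crosscap cost, which is exactly the term $b_i=0$. One then checks by an elementary induction that $\tfrac12\sum b_i$ satisfies the same recursion and base cases as this sharp count, so $\mathrm{Cr}(p,q)=\tfrac12\sum_{i=0}^{n}b_i$.

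\emph{Where the work is.} The routine part is the first paragraph above and the \emph{upper} bound in the second: these are standard $3$-manifold topology, the only subtlety being to keep careful track of slopes and of the sign ambiguities coming from the orientation conventions on $\partial V_1$. The real work --- and the main obstacle --- is the matching \emph{lower} bound: given any embedded incompressible non-orientable surface in $L(p,q)$, one must show its crosscap number is at least $\tfrac12\sum b_i$, i.e.\ that no band sequence shorter than the sharp count yields an incompressible surface. This forces one to understand exactly when a band can be removed (or a block ``folded'') while preserving incompressibility, and it is precisely here that the parity of $\sum_{j<i}b_j$ enters as an obstruction; ruling out all the shorter possibilities, using incompressibility in an essential way, is the heart of the argument.
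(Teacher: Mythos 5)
This statement is quoted from Bredon--Wood and is not proved in the paper; the closest internal analogue is Theorem \ref{theorem:NewFormula}, which the paper derives from the tree structure of ${\mathbb D}_2$ and the ``mother'' recursion $[a_0,\dots,a_n]\mapsto[a_0,\dots,a_n-2]$. Your outline follows essentially that route (reduction to standard form, one band sum per mother step), and the first paragraph and the upper bound are fine in substance. But as a proof it has two genuine gaps, both of which you flag yourself without closing.

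First, the lower bound. You correctly identify that the heart of the matter is showing that no shorter band sequence produces a geometrically incompressible surface, but you offer no mechanism for it. The paper's mechanism is concrete: every band sum moves the boundary slope along an edge of ${\mathbb D}_2$ (Theorem \ref{theorem:BandSumD2}), ${\mathbb D}_2$ is a tree (Theorem \ref{theorem:tree}), so any band-sum sequence traces an edge-path from $0/1$ to $p/q$ whose length is at least the tree distance; and a path that repeats an edge forces two mutually dual surgeries, whose bands cobound an annulus giving a compressing disk (Lemma \ref{lemma:UniqueBandSum} and the proof of Theorem \ref{theorem:main}). Equivalently one can invoke Rubinstein's uniqueness of the incompressible surface. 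Without some such argument your ``sharp count'' is only an upper bound and the theorem is not proved.

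Second, the combinatorial identification. You assert that ``an elementary induction'' shows $\tfrac12\sum b_i$ satisfies the recursion ${\rm Cr}([a_0,\dots,a_n])={\rm Cr}([a_0,\dots,a_n-2])+1$. This is not elementary as stated: the Bredon--Wood quantities $b_i$ are built from the \emph{head} of the expansion ($b_0=a_0$, each $b_i$ depending on $b_{i-1}$ and the running parity), whereas the geometric recursion decrements the \emph{tail} entry $a_n$; decrementing $a_n$ by $2$ can reorganize the entire expansion (e.g.\ when $a_n=2$ or $a_n=3$), and tracking how all the $b_i$ respond is a genuine combinatorial lemma. The paper sidesteps exactly this by stating its formula for the reversed expansion $p/q=[\alpha_n,\dots,\alpha_0]$ with the recursion anchored at the last entry $\alpha_0$, so that it peels off in the same direction as the mother map; the zero terms ($\beta_i=0$) then fall out of the arithmetic identity $[\dots,\alpha_{i+1},\alpha_i,0]=[\dots,\alpha_{i+1}]$ rather than from any ``sharper construction at no crosscap cost,'' which in your sketch is a narrative placeholder with no construction behind it. To complete your argument you would need either to prove the head-to-tail compatibility of the $b_i$ directly, or to prove the tail-anchored formula first and then verify its equivalence with the stated one.
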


 J. H. Rubinstein studied 
non-orientable closed surfaces in $3$-manifolds 
in \cite{R1}, \cite{R2} and \cite{R3}.
 Such surfaces of minimum crosscap number in lens spaces are considered
in section 3 in \cite{R3}.
 See also \cite{F}, 
in which one-sided closed surfaces in Seifert fibered spaces are studied
by C. Frohman.
 In order to introduce results in \cite{R3},
we need to recall some definitions.

\begin{definition}
 Let $M$ be a closed $3$-manifold,
and $F$ a (possibly one-sided) closed surface embedded in $M$.
 An embedded disk $D$ in $M$ is called a compressing disk of $F$
if $D \cap F = \partial D$ 
and the boundary circle $\partial D$ is essential in $F$.
 We say $F$ is {\it geometrically incompressible}
if it has no compressing disk.
 If it has, it is {\it geometrically compressible}.
\end{definition}

\begin{remark}\label{remark:NonOriIncpr}
 A non-orientable closed surface 
of minimum crosscap number in $L(p,q)$ is 
geometrically incompressible
as shown in lines 9--11 in page 192 in \cite{R3}.
 Section \ref{section:GeometricalIncompressibility}
includes the argument for self-containedness.
\end{remark}

\begin{figure}
\includegraphics[width=8cm]{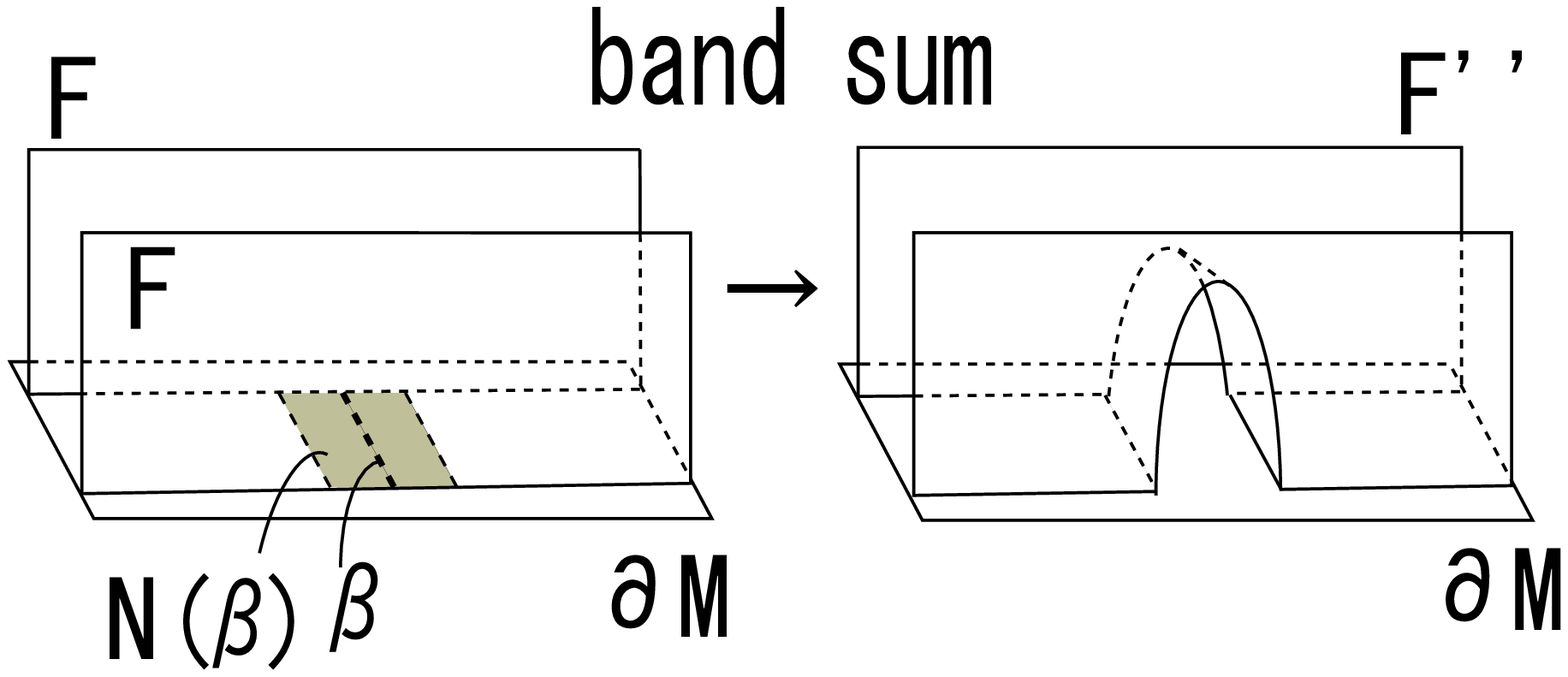}
\caption{}
\label{fig:BandSum}
\end{figure}

\begin{definition}
 Let $M$ be a compact $3$-manifold with non-empty boundary $\partial M$,
$F$ a compact surface properly embedded in $M$,
and $\beta$ an arc embedded in $\partial M$ 
so that $\beta \cap \partial F = \partial \beta$.
 We obtain a new surface $F''$ 
by an operation called a {\it band sum} on $F$ along $\beta$ as below.
 We take a tubular neighbourhood $N(\beta) \cong \beta \times I$ 
of $\beta$ in $\partial M$
so that $\partial F \cap N(\beta) = (\partial \beta) \times I$.
 We isotope the interior of the surface $F' = F \cup N(\beta)$ 
slightly into int\,$M$
with its boundary circles $\partial F'$ fixed, 
to obtain a new surface $F''$.
 See Figure \ref{fig:BandSum}.
 A band sum is {\it trivial}
if a union of $\beta$ and a subarc of $\partial F$
forms a circle bounding a disk in $\partial M$.
\end{definition}

\begin{remark}\label{remark:EssBandSum}
 In the above definition of band sum,
if $\partial M$ is a torus and
$\partial F$ is a single essential circle in $\partial M$,
then 
a non-tirivial band sum 
is along an arc connecting the both sides of $\partial F$,
that is, for an arbitrary tubular neighborhood 
$N(\partial F) \cong \partial F \times I$ of $\partial F$ in $\partial M$, 
both $\beta \cap (\partial F \times \{ 0 \}) \ne \emptyset$ 
and $\beta \cap (\partial F \times \{ 1 \}) \ne \emptyset$ hold.
 In this case, the boundary of the resulting surface 
is again a single essential circle in $\partial M$.
\end{remark}

\begin{definition}\label{definition:standard}
 Let $F$ be a connected closed surface embedded 
in $L(p,q) \cong V_1 \cup_{\partial} V_2$.
 Then we say that $F$ is in {\it standard form}
if $F$ is obtained from a meridian disk $D_1$ of $V_1$
by performing a finite number of non-trivial band sum operations
and capping off the boundary circle of the resulting surface
by a meridian disk $D_2$ of $V_2$.
\end{definition}

 Note that $F$ in standard form is non-orientable,
since it intersects a core loop of $V_1$ (resp. $V_2$)
in a single point in the interior of $D_1$ (resp. $D_2$).

\begin{theorem}\label{theorem:Rubinstein}
{\rm (Rubinstein, Proposition 11 and Theorem 12 in \cite{R3})}
 Let $L(p,q) \cong V_1 \cup_{\partial} V_2$ 
be the $(p,q)$-lens space with $p$ even.
 Let $F$ be a geometrically incompressible connected closed surface in $L(p,q)$
which is not homeomorphic to the $2$-sphere.
 Then $F$ can be isotoped to be in standard form.
 Moreover, in $L(p,q)$,
a geometrically incompressible non-spherical connected closed surface 
is unique up to isotopy,
and hence 
is a non-orientable closed surface of minimum crosscap number.
\end{theorem}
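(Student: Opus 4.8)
The plan is to place $F$ in minimal position with respect to the Heegaard torus $T=\partial V_1=\partial V_2$, read the standard form off the way $F$ sits inside the two solid tori, and then obtain uniqueness by reducing to a uniqueness statement for surfaces in a single solid torus.

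\emph{Existence of the standard form.} First I would make $F$ transverse to $T$ and choose the isotopy class of $F$ so that $|F\cap T|$ is minimal. A standard innermost–disk argument then gives the basic simplification: $L(p,q)$ is irreducible (every $2$-sphere bounds a ball), so a circle of $F\cap T$ bounding an innermost disk in $T$ cannot be essential in $F$ — otherwise that disk would be a compressing disk for $F$, contradicting Remark~\ref{remark:NonOriIncpr} — and combining such a disk with a subdisk of $F$ produces a reducing sphere across which $F$ can be isotoped to lower $|F\cap T|$; likewise a circle bounding an innermost disk in $F$ can be removed. Hence, after isotopy, every circle of $F\cap T$ is essential in $T$ (so all are parallel, of a common slope $\delta$) and none bounds a disk in $F$. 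Geometric incompressibility of $F$ forces each component of $F\cap V_1$ and of $F\cap V_2$ to be incompressible in the relevant solid torus, so by the classification of incompressible surfaces in a solid torus each piece is a meridian disk or a $\partial$-parallel annulus. Since $p\ge 2$, $\delta$ is not simultaneously the meridian of $V_1$ and of $V_2$; using this, together with the fact that $F$ is one-sided (a two-sided geometrically incompressible non-spherical surface would be $\pi_1$-injective by the loop theorem, which is impossible in $L(p,q)$), I would push off $\partial$-parallel annuli by isotopy, reducing $|F\cap T|$, and track what obstructs further reduction. The outcome should be that $F\cap T$ is a single circle bounding a meridian disk $D_2=F\cap V_2$ of $V_2$, while $G:=F\cap V_1$ is a connected one-sided surface properly embedded in $V_1$ with $\partial G=\delta$ the $(p,q)$-curve on $\partial V_1$ (so $\delta$ is determined by $L(p,q)$, and $p$ even is precisely the condition for $\delta$ to bound such a non-orientable $G$). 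Finally I would check that $G$, being non-orientable and incompressible (hence neither a meridian disk nor a $\partial$-parallel annulus), is $\partial$-compressible, and that running a maximal sequence of $\partial$-compressions of $G$ down to a meridian disk $D_1$ of $V_1$ backwards exhibits $G$ exactly as a sequence of non-trivial band sums along arcs in $\partial V_1$ applied to $D_1$. That is the standard form for $F$.

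\emph{Uniqueness.} Let $F$ and $F'$ both be geometrically incompressible, non-spherical, connected and closed. By the first part each is isotopic to standard form, so we may assume $F\cap V_2$ and $F'\cap V_2$ are meridian disks of $V_2$, and $G=F\cap V_1$, $G'=F'\cap V_1$ are geometrically incompressible once-punctured non-orientable surfaces in $V_1$ with the same boundary $\delta$ on $\partial V_1$. As the meridian disk of $V_2$ is unique up to isotopy, it suffices to prove $G\simeq G'$ in $V_1$. I would argue by induction on the crosscap number, the base case being a meridian disk of $V_1$ (unique up to isotopy). For the inductive step I would $\partial$-compress $G$ and $G'$; using an innermost-disk/outermost-arc argument between a $\partial$-compressing disk for $G$ and the surface $G'$ (exploiting irreducibility of $V_1$ and geometric incompressibility to cancel intersections) one arranges the two $\partial$-compressions to be reverses of the same band sum, so the $\partial$-compressed surfaces fall under the inductive hypothesis and are isotopic, and pushing the band sum back shows $G\simeq G'$. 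Along the way one must also check that a geometrically incompressible standard-form surface realizes the \emph{minimum} crosscap number ${\rm Cr}(p,q)$ — equivalently, that a standard-form surface built from too many band sums acquires a compressing disk — so that the induction quantity is well defined and all geometrically incompressible non-spherical surfaces share the same crosscap number. Combined with Remark~\ref{remark:NonOriIncpr}, which supplies one geometrically incompressible non-orientable surface of minimum crosscap number, uniqueness identifies $F$ with that surface; in particular $F$ is a non-orientable closed surface of minimum crosscap number.

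\emph{Main obstacle.} The crux is the solid-torus bookkeeping in the first part: cataloguing which configurations of meridian disks and $\partial$-parallel annuli can persist in minimal position for a one-sided $F$, and showing that isotopies remove all the extra pieces, leaving the single-circle, one-meridian-disk picture that defines the standard form. The parallel difficulty in the uniqueness half is making the two $\partial$-compressions compatible and, with it, excluding geometrically incompressible standard-form surfaces of non-minimum crosscap number.
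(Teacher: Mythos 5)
There is a genuine gap in the existence half. You assert that, once $|F\cap T|$ is minimal and all intersection circles are essential, ``by the classification of incompressible surfaces in a solid torus each piece is a meridian disk or a $\partial$-parallel annulus.'' That classification requires the pieces to be geometrically incompressible \emph{and} $\partial$-incompressible (this is exactly Lemma \ref{lemma:IncprInSolidTorus} of the paper, whose conclusion is ``spheres and disks''); with incompressibility alone it is false. Indeed your own endpoint contradicts it: the piece $G=F\cap V_1$ you arrive at is a once-punctured non-orientable surface of arbitrarily large crosscap number, which is geometrically incompressible but $\partial$-compressible and is neither a disk nor an annulus. So the step ``push off $\partial$-parallel annuli and conclude $F\cap T$ is a single circle'' does not follow from what precedes it; the whole content of the reduction is to control an incompressible but $\partial$-compressible piece, and that is precisely what is missing. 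The paper handles this differently: it first isotopes $F$ so that $F\cap V_1$ is a minimal collection of meridian disks of $V_1$, proves $S=F\cap V_2$ is geometrically incompressible (Lemma \ref{lemma:FcapV2Incpr}) and necessarily $\partial$-compressible, shows via a careful analysis of a $\partial$-compressing disk that $F\cap V_1$ is a \emph{single} meridian disk, and then runs a terminating sequence of $\partial$-compressions of $S$ in $V_2$, each of which is a band sum on the $V_1$-side; the process ends when the remaining piece in $V_2$ is $\partial$-incompressible and hence a meridian disk. Your final ``run the $\partial$-compressions of $G$ backwards'' idea is the right mechanism, but you need the $\partial$-compression/band-sum bookkeeping (including ruling out inessential band sums) rather than the false disk-or-annulus classification.

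The uniqueness half is also essentially only asserted. The key step --- using an innermost-disk/outermost-arc exchange to make a $\partial$-compressing disk of $G$ compatible with $G'$ so that the two $\partial$-compressions are ``reverses of the same band sum'' --- is exactly where the work lies, and you also note but do not supply the fact that a geometrically incompressible standard-form surface realizes the minimum crosscap number. The paper takes a different route: it shows the sequence of boundary slopes produced by the band sums traces an edge-path in the tree ${\mathbb D}_2$ (Theorems \ref{theorem:BandSumD2} and \ref{theorem:tree}), that a repeated edge would produce a compressing disk (so the path is the unique minimal one $\gamma(p,q)$), and that the band-sum arc realizing a given slope change is unique (Lemma \ref{lemma:UniqueBandSum}); uniqueness of the surface and the crosscap-number count then fall out together. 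As written, your proposal would need both the corrected solid-torus analysis and a genuine proof of the compatibility step before it could stand.
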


 As an example, 
the sequence of band sums for $(8,3)$-lens space 
is described in Figure \ref{fig:83-lens}.

\begin{figure}
\includegraphics[width=8cm]{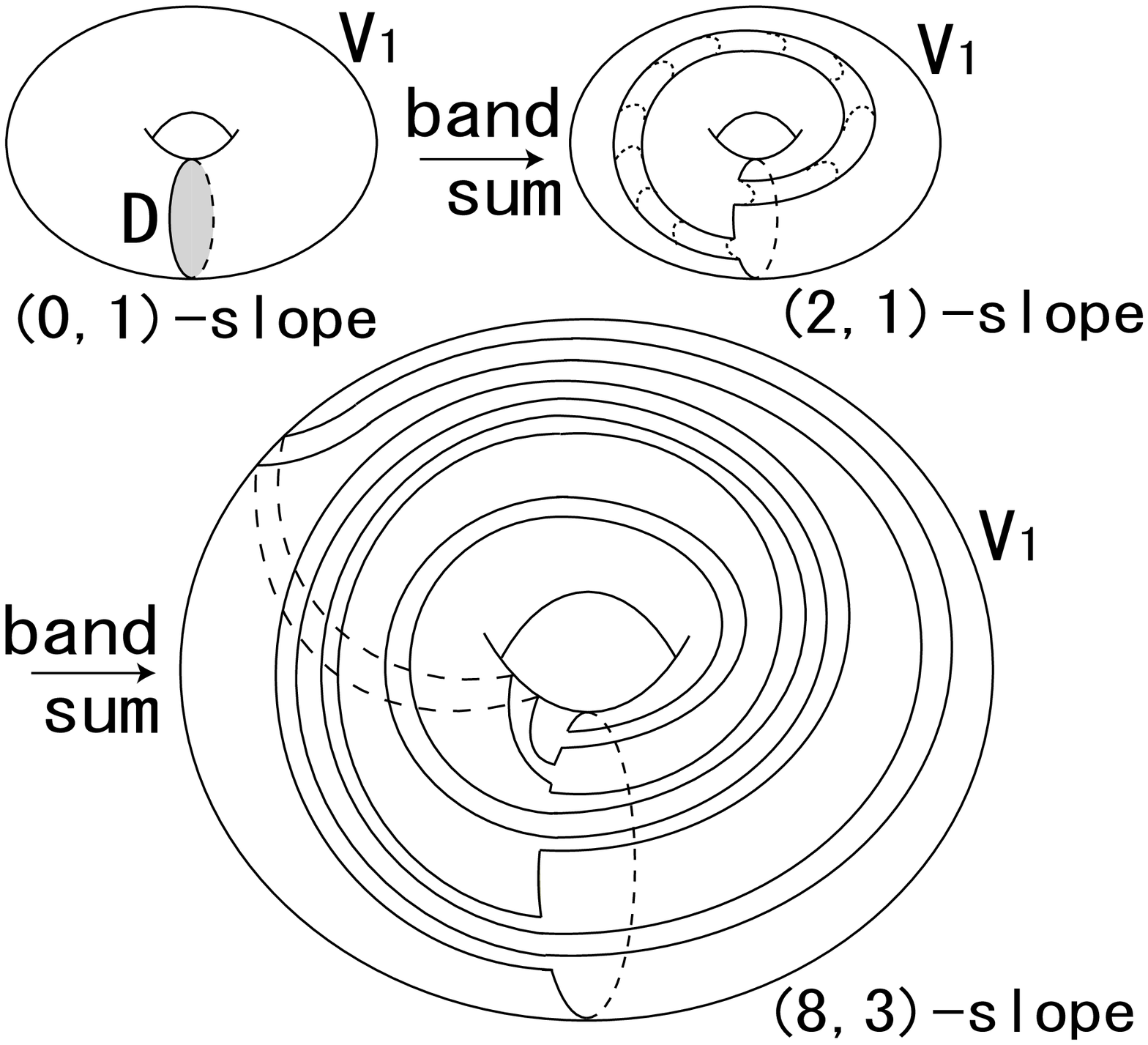}
\caption{}
\label{fig:83-lens}
\end{figure}

 Applications of uniqueness of an isotopy class 
of some kind of non-orientable surfaces
are found in \cite{R1}, \cite{R2}, \cite{R3}, \cite{R4} and \cite{JRT}.

 The next theorem gives
a new formula of the minimal crosscap number
of non-orientable connected closed surfaces in $L(p,q)$,
which is obtained from consideration on standard positions. 

\begin{theorem}\label{theorem:NewFormula}
 Let $p,q$ be coprime natural numbers with $p \ge 2$,
and 
\newline
$p/q = [\alpha_n, \alpha_{n-1}, \cdots, \alpha_0]$ 
be the standard continued fraction expansion.
\newline
 We define 
$\alpha'_0=\alpha_0$, and
$\alpha_i' = 
\left\{ \begin{array}{l}
\alpha_i 
\ ({{\rm when}  \ \alpha'_{i-1}=\infty})\\ 
\alpha_i+1 \ ({{\rm when}  \ \alpha'_{i-1} \  {\rm is \  odd}}) \\
\infty \ ({{\rm when}  \ \alpha'_{i-1} \  {\rm is \  even }}) \\
\end{array} \right.$ 
for $i=1, 2, \cdots, n$.
\newline
 We set 
$\beta_i = 
\left\{ \begin{array}{l}
\alpha'_i / 2 
\ ({{\rm when}  \ \alpha'_i \  {\rm is \  even}})\\ 
(\alpha'_i-1) / 2 \ ({{\rm when}  \ \alpha'_i \  {\rm is  \ odd}}) \\
0 \ ({{\rm when}  \ \alpha'_i=\infty}) \\
\end{array} \right.$
for $i=0, 1, \cdots, n$.
\newline
 Then the minimum crosscap number is calculated by
${\rm Cr}(p,q) = \sum_{i=0}^n \beta_i$.
\end{theorem}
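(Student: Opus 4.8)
The plan is to deduce Theorem~\ref{theorem:NewFormula} from the edge-path picture of standard forms. (Recall that $p$ must be even for ${\rm Cr}(p,q)$ to be defined, so we assume this.) By Theorem~\ref{theorem:Rubinstein}, a non-orientable closed surface of minimum crosscap number in $L(p,q)$ is unique up to isotopy and lies in standard form, and the earlier part of the paper identifies the sequence of band sums producing such a standard form with an edge-path $\lambda$ in a certain tree graph $T$ in the closure of the hyperbolic upper half plane: $\lambda$ runs from the vertex labelled $0/1$ to the vertex labelled $p/q$, and the label $p_i/q_i$ of its $i$-th vertex is the slope of the boundary circle after the $i$-th band sum. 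Since the number of edges of $\lambda$ equals the number of band sums, which equals ${\rm Cr}(p,q)$, it suffices to count the edges of $\lambda$ in terms of the standard continued fraction $p/q=[\alpha_n,\alpha_{n-1},\cdots,\alpha_0]$.

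So the heart of the argument is a combinatorial count, which I would organise as follows. The vertex labels $p_i/q_i$ are read off the continued fraction by processing the coefficients from the innermost $\alpha_0$ (which is $\geq 2$) outwards: the labels run through a prescribed subsequence of the partial evaluations $[\alpha_j,\alpha_{j-1},\cdots,\alpha_0]$ together with certain mediants between them, so that $\lambda$ decomposes into consecutive blocks $B_0,B_1,\dots,B_n$, with $B_i$ the stretch of $\lambda$ that incorporates $\alpha_i$. Each edge of $T$ amounts to two elementary mediant (Farey) steps on the boundary torus, so a block incorporating $\alpha_i$ takes roughly $\alpha_i/2$ edges --- this is where the factor $1/2$ comes from. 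I would then prove by induction on $i$ that after traversing $B_0,\dots,B_i$ the number of edges used equals $\sum_{j=0}^{i}\beta_j$ and that the ``carry'' passed into the next block --- the relevant residue modulo $2$, or the coincidence of two consecutive block endpoints --- is exactly the value (even, odd, or $\infty$) of $\alpha'_i$. The three clauses defining $\alpha'_i$ should match the three ways $B_i$ attaches to $B_{i-1}$: when $\alpha'_{i-1}=\infty$, the path $B_{i-1}$ already ended at the first vertex of $B_i$, and $\alpha_i$ Farey steps give $\beta_i=\lfloor\alpha_i/2\rfloor$ edges; when $\alpha'_{i-1}$ is odd, one Farey step of $B_i$ is spent joining onto $B_{i-1}$, so $\alpha_i+1$ steps are consumed, giving $\beta_i=\lfloor(\alpha_i+1)/2\rfloor$ edges; when $\alpha'_{i-1}$ is even, the first vertex of $B_i$ has already been reached, so $B_i$ adds no new edge and one carries the marker $\alpha'_i=\infty$ forward --- this is the geometric origin of $\beta_i=0$, mirroring the $b_i=0$ case of the formula of Bredon and Wood. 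Summing over $i$, and using that $\lambda$ ends at $p/q$, yields ${\rm Cr}(p,q)=\sum_{i=0}^{n}\beta_i$; the worked case of $L(8,3)$ in Figure~\ref{fig:83-lens} is a convenient check.

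The step I expect to be the main obstacle is the inductive step: describing the geometry of $T$ at the junction of two consecutive blocks sharply enough to force exactly the rounding above. The $\infty$-case is the subtle one, where the Farey steps needed to absorb $\alpha_i$ have in effect already been performed inside $B_{i-1}$, so $\alpha_i$ contributes no edge at all. One must also check that $\lambda$ stops exactly at $p/q$, neither short nor overshooting --- equivalently that $\alpha'_n$ is always even --- which I would obtain by propagating the state modulo $2$ along the recursion, together with the fact that ${\rm Cr}(p,q)$ is an integer, so no half-edge can be left over at the end.

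Finally, as an independent check one can match $\sum_{i=0}^{n}\beta_i$ against the value $\sum_{i=0}^{n}b_i/2$ of Bredon and Wood. Reversing the coefficient sequence of a standard continued fraction of $p/q$ and re-standardising gives a standard expansion of some $p/q^{*}$ with $q^{*}\equiv\pm q^{\pm1}\pmod p$, whence $L(p,q)\cong L(p,q^{*})$ and ${\rm Cr}(p,q)={\rm Cr}(p,q^{*})$; it then remains to verify, by a finite case analysis on the three possibilities for $\alpha'_{i-1}$, that the recursion $(\alpha'_i,\beta_i)$ reproduces block by block the sequence $(b_i)$ attached to the reversed expansion, the delicate points being the treatment of the outermost coefficient $\alpha_0=a_n\geq2$ and of the re-standardisation at the reversed end.
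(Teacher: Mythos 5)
Your overall strategy is the paper's: reduce, via Lemma \ref{lemma:FEuler} and Theorem \ref{theorem:main}, to counting the edges of the minimal edge-path in the tree from $0/1$ to $p/q$, and then perform that count by sweeping through $\alpha_0,\alpha_1,\cdots,\alpha_n$ with a parity carry. The difficulty is that the step you yourself flag as ``the main obstacle'' --- describing the junction of two consecutive blocks sharply enough to force the stated rounding --- is precisely where all the content lies, and your sketch does not supply it. The paper's device is the \emph{mother} map of Definition \ref{definition:mother}: for $r=[a_0,\cdots,a_n]$ standard, $M(r)=[a_0,\cdots,a_n-2]$ is a neighbour of $r$ in ${\mathbb D}_2$ of strictly smaller size (Lemma \ref{lemma:mother}), the iterated mother sequence reaches $0/1$ (Lemma \ref{lemma:connected}) and, by Lemmas \ref{lemma:child} and \ref{lemma:territory}, is exactly the minimal edge-path. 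Granting that, your ``blocks'' are the maximal runs of $M$ that consume one coefficient, and the three clauses of the recursion for $\alpha'_i$ drop out of the re-standardisation identities $[\cdots,\alpha_1,1]=[\cdots,\alpha_1+1]$ (odd case) and $[\cdots,\alpha_2,\alpha_1,0]=[\cdots,\alpha_2,\infty]=[\cdots,\alpha_2]$ (even case, which is why the whole coefficient $\alpha_i$ evaporates and $\beta_i=0$). Without something equivalent --- i.e.\ a proof that each edge of the path is ``subtract $2$ from the last coefficient of the standard expansion'' --- the observation that an edge of ${\mathbb D}_2$ is two Farey steps does not determine \emph{which} two Farey steps, so the rounding is asserted rather than forced. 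You should either cite the mother/territory lemmas or reprove them; as written this is a gap.

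A smaller but genuine error: your argument that $\alpha'_n$ is even ``because ${\rm Cr}(p,q)$ is an integer, so no half-edge can be left over'' does not work. If $\alpha'_n$ were odd, $\beta_n=(\alpha'_n-1)/2$ would still be an integer; the problem would instead be that the mother sequence terminates at $1/1$ rather than $0/1$. The correct reason is Lemma \ref{lemma:EvenInherit} (equivalently Lemma \ref{lemma:mother}): every vertex on the path has even numerator, so when the expansion collapses to length one it reads $[\alpha'_n]=\alpha'_n/1$ with $\alpha'_n$ even. Your closing cross-check against the Bredon--Wood formula is harmless but plays no role in the proof.
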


%

\begin{figure}
\includegraphics[width=8cm]{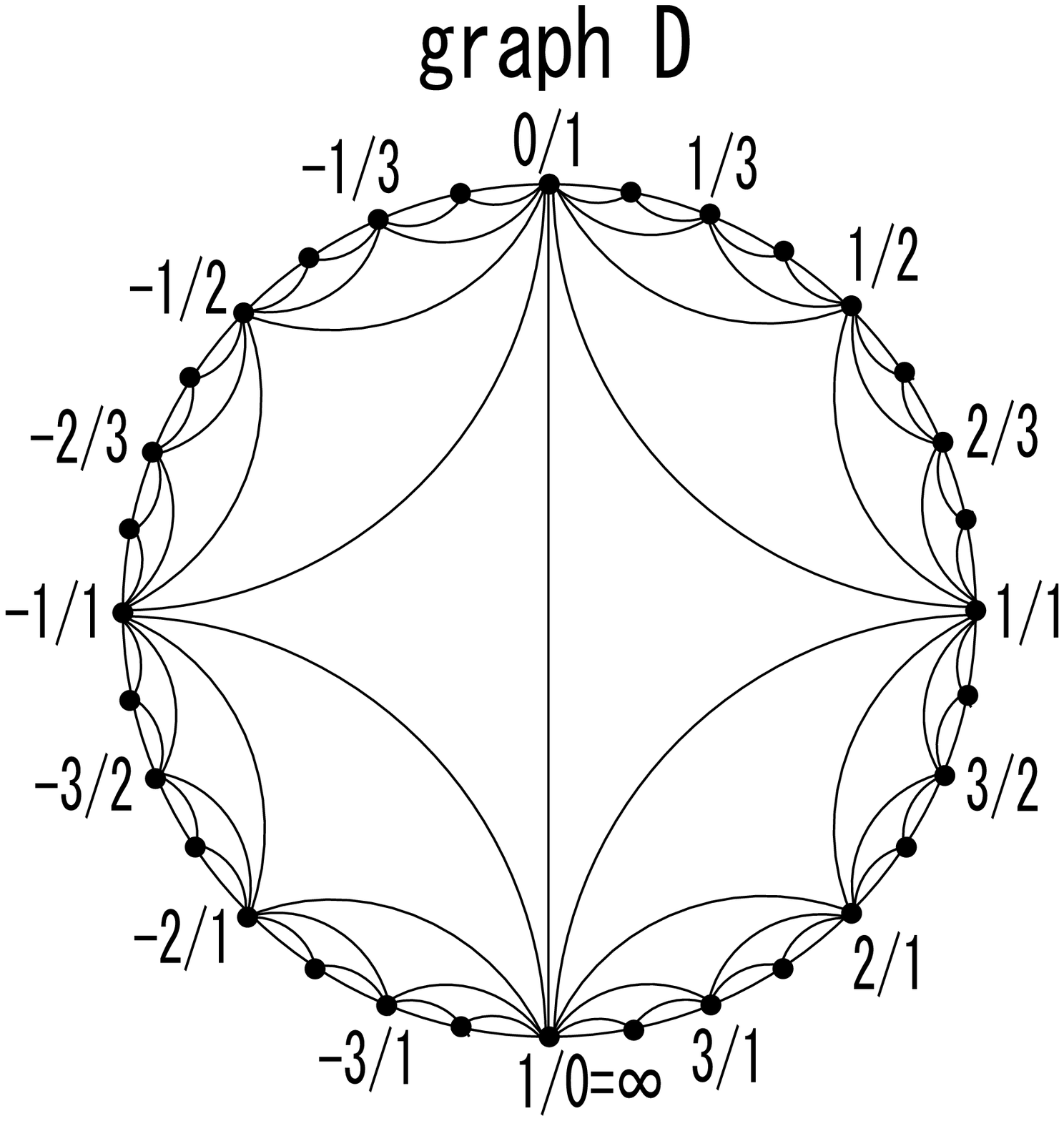}
\caption{}
\label{fig:D}
\end{figure}

 In \cite{HT}, 
transitions of slopes (of circles and arcs in a $2$-sphere with four punctures) 
caused by band sum operations 
is described by edge-paths in the graph ${\mathbb D}$ below.
 In this paper, 
we introduce a certain tree graph ${\mathbb D}_2$ taking after ${\mathbb D}$.

 For a pair of integers $p$ and $q$,
we say $p/q$ is an {\it irreducible fractional number}
if $p$ and $q$ are coprime, that is, ${\rm GCD}(p,q) =1$.
 Hence $p = p/1$ is an irreducible fractional number for any integer $p$.
 We consider $1/0$ and $(-1)/0$ 
representing the same irreducible fractional number,
which is denoted by $\infty$.
 As usual, $\infty + p/q = \infty$ and $1/\infty = 0$.
 For an arbitrary pair of irreducible fractional numbers $p/q$ and $r/s$,
we set 
$d(p/q, r/s) 
= |{\rm det} \left( \begin{array}{cc} p & r \\ q & s \end{array} \right)| 
= |ps-rq|$, 
and call it the {\it distance} of them.

 The graph ${\mathbb D}$ is embedded on the upper half plane 
${\mathbb H} \ (\subset {\mathbb C})$
with the real line ${\mathbb R}$ and the point at infinity $\infty$.
 Its vertices are the rational points and $\infty$
in ${\mathbb R} \cup \{ \infty \}$,
and its edges are geodesics on the upper half model of the hyperbolic plane
which connect two vertices corresponding to the irreducible fractional numbers
$a/c$, $b/d$, ($a,b,c,d \in {\mathbb Z}$)
if and only if $|ad-bc| = d(a/c, b/d) = 1$.
 See Figure \ref{fig:D},
where ${\mathbb H} \cup {\mathbb R} \cup \{ \infty \}$ 
is transformed onto the Poincar\'{e} disk model 
by the map $z \mapsto \dfrac{z+i}{iz+1}$.
 We can easily draw this graph
by following the rule that
two vertices corresponding to the irreducible fractional number $p/q$, $r/s$ 
and connected by an edge of ${\mathbb D}$
are those of a triangle face of ${\mathbb D}$
with $(p+q)/(r+s)$ being the third vertex.

\begin{figure}
\includegraphics[width=8cm]{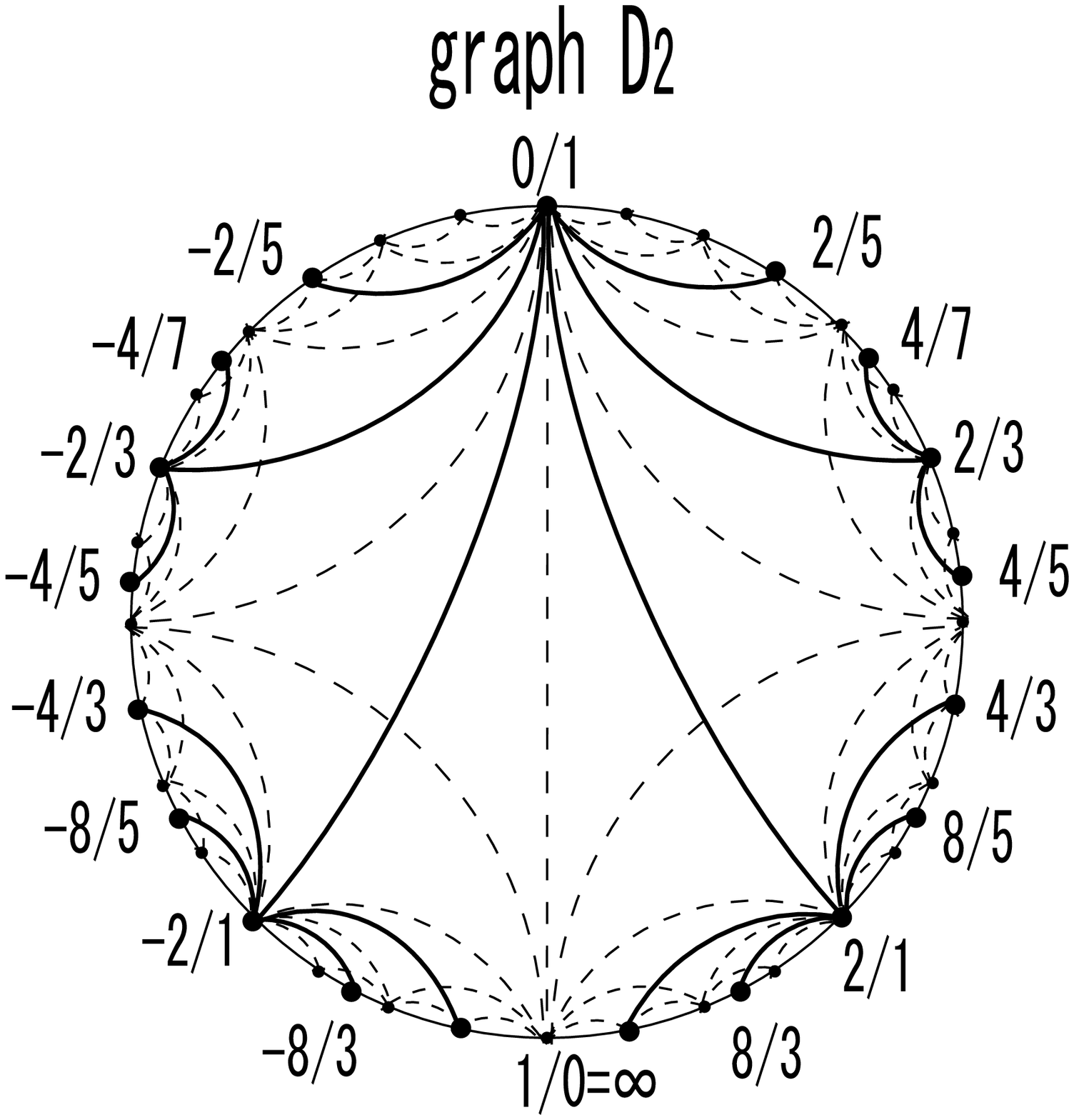}
\caption{}
\label{fig:D2}
\end{figure}

 When $p$ is even,
a vertex of ${\mathbb D}$ 
corresponding to the irreducible fractional number $p/q$
is called an {\it even} vertex in this paper.
 If $p$ is odd, we call it an {\it odd} vertex.
 Adequately separating the trigonal faces of ${\mathbb D}$ 
into adjacent pairs
and taking a union of every pair of trigonal faces,
we obtain a tiling of the upper half plane 
by infinitely many quadrilaterals
with two even vertices and two odd vertices.
 The vertices of ${\mathbb D}_2$ are the even vertices of ${\mathbb D}$.
 Two vertices $p/q$, $r/s$, 
($p,q,r,s \in {\mathbb Z}$, ${\rm GCD}(p,q)=1$ and ${\rm GCD}(r,s)=1$)
are connected by an edge of ${\mathbb D}_2$
if and only if $|ps-rq|=d(p/q,r/s)=2$.
 Each edge of ${\mathbb D}_2$ does not appear in ${\mathbb D}$,
but forms a diagonal line of a quadrilateral of the tiling as above.
 See Figure \ref{fig:D2},
where ${\mathbb D}_2$ is described by solid lines.
 We regard the vertex, 
assigned an irreducible fractional number $r/s$, 
as corresponding to the $(r,s)$-slope on $\partial V_1$.

 The next theorem will be shown in Section \ref{section:SurgeryAndSlope}.

\begin{theorem}\label{theorem:BandSumD2}
 For any band sum operation in Definition \ref{definition:standard},
the slopes of the boundary circles of the surfaces in $V_1$
before and after the operation
are connected by an edge of ${\mathbb D}_2$.
\end{theorem}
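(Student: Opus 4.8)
The plan is to follow the boundary circle on $\partial V_1$ through a single band sum and show that one such step changes its homology class by twice a primitive class that meets the old one exactly once; this forces the two slopes to be at distance $2$, and, since all the slopes occurring along a standard form turn out to be even vertices, the two vertices are joined by an edge of ${\mathbb D}_2$.

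Write $T=\partial V_1$ and fix a basis $\mu,\lambda$ of $H_1(T;{\mathbb Z})$ represented by a meridian and a longitude, so that the $(r,s)$-slope has class $s\mu+r\lambda$; then for two slopes the distance $d$ is exactly the absolute value of the homological intersection number of the corresponding classes, so it suffices to compute that. Consider one band sum of Definition \ref{definition:standard}, producing $F''$ from $F$ along an arc $\beta\subset T$, and set $c=\partial F$, $c'=\partial F''$. By Remark \ref{remark:EssBandSum}, $c$ and $c'$ are single essential simple closed curves on $T$, and non-triviality of the band sum means that the two feet of $\beta$ lie on opposite sides of $c$ in $T$. With $N(\beta)\cong\beta\times I$ the band, one has, as a curve on $T$,
\[
c' \;=\;\bigl(c\setminus(\partial\beta\times I)\bigr)\cup(\beta\times\partial I),
\]
that is, $c$ with the two short arcs $\partial\beta\times I$ deleted and the two parallel copies $\beta\times\{0\}$, $\beta\times\{1\}$ of $\beta$ inserted.

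First I would establish the homology identity
\[
[c']\;=\;[c]+2\,[\widetilde\beta]\qquad\text{in }H_1(T;{\mathbb Z}),
\]
for a suitable subarc $\gamma$ of $c$ cut off by the feet of $\beta$, where $\widetilde\beta:=\beta\cup\gamma$ is an embedded loop, and for suitable orientations of $c,c',\widetilde\beta$. The key qualitative point is that, because the feet of $\beta$ lie on opposite sides of $c$, when one traverses $c'$ the two complementary arcs of $c$ are run through with \emph{opposite} induced orientations, contributing $[c]-2[\gamma]$, while the two parallel copies of $\beta$ are run through coherently, contributing $2[\beta]$; regrouping $[\beta]-[\gamma]$ as a loop homologous to $[\widetilde\beta]$ gives the identity. (Had the feet been on the same side the two arcs of $c$ would be run through coherently and one would get a disconnected curve or an unchanged class, which is precisely the trivial case excluded by Remark \ref{remark:EssBandSum}; this is where non-triviality enters.) Next, since the band sum is non-trivial, no arc $\beta\cup(\text{subarc of }c)$ bounds a disk in $T$, so $\widetilde\beta$ is essential, hence primitive in $H_1(T;{\mathbb Z})$. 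Finally, pushing $\gamma$ slightly off $c$ shows that $\widetilde\beta$ and $c$ meet transversely in a single point (the crossing survives at the foot at which $\beta$ and $\gamma$ leave $c$ on opposite sides); a single transverse intersection of two embedded curves on $T$ cannot be removed, so $|[c]\wedge[\widetilde\beta]|=1$.

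Combining, $[c]\wedge[c']=[c]\wedge\bigl([c]+2[\widetilde\beta]\bigr)=2\,[c]\wedge[\widetilde\beta]=\pm2$, so $d\bigl(\mathrm{slope}(c),\mathrm{slope}(c')\bigr)=2$. It remains to note that every slope occurring along a standard form is an even vertex: $\partial F$ is the ${\mathbb Z}_2$-boundary of the properly embedded surface $F$, hence represents $0$ in $H_1(V_1;{\mathbb Z}_2)={\mathbb Z}_2$, and the map $H_1(T;{\mathbb Z}_2)\to H_1(V_1;{\mathbb Z}_2)$ sends $s\mu+r\lambda$ to $r\bmod 2$, so $r$ is even (equivalently, starting from the meridian $(0,1)$, a distance-$2$ neighbour of an even vertex is again even, coprimality forcing the other coordinate to be odd). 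Two even vertices at distance $2$ are joined by an edge of ${\mathbb D}_2$ by definition, which completes the argument. The main obstacle is the homology identity, specifically checking that the coefficient of $[\widetilde\beta]$ is exactly $2$ rather than $0$: this requires tracing the rerouted curve $c'$ carefully enough to see that the two complementary arcs of $c$ acquire opposite orientations, and it is exactly here that Remark \ref{remark:EssBandSum} (feet on opposite sides) is used; the determinant computation and the even-vertex claim are routine.
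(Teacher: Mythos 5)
Your argument is correct and follows essentially the same route as the paper: Remark \ref{remark:EssBandSum} reduces the band sum to a surgery along an arc joining opposite sides of the boundary circle, the coherent orientation of the two parallel copies of $\beta$ then forces the old and new boundary classes to have intersection number $\pm 2$ (the paper's Lemma \ref{lemma:SurgeryOnCircleSlope1}, which counts the two intersection points directly rather than via your decomposition $[c']=[c]+2[\widetilde\beta]$), and evenness of all intermediate slopes is exactly Lemma \ref{lemma:EvenInherit}. The only difference is bookkeeping, not substance.
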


 This theorem implies that
the transition of the slopes of the boundaries of the surfaces in $V_1$ 
by the band sums in Definition \ref{definition:standard} is 
along an edge-path of ${\mathbb D}_2$,
in which the same edge can appear twice. 

 We will show the next theorem in Section \ref{section:D2}.

\begin{theorem}\label{theorem:tree}
The graph ${\mathbb D}_2$ is a tree, i.e., 
${\mathbb D}_2$ is connected and contains no cycle.
\end{theorem}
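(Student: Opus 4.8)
The plan is to establish connectedness and acyclicity separately, using the fact that $\mathbb{D}_2$ sits inside the Farey graph $\mathbb{D}$, which is well known to be connected with the property that its complementary regions (the triangles) each have exactly three vertices, two of which are joined by an edge. First I would record the elementary number-theoretic facts about distances: if $d(p/q,r/s)=2$ then $p$ and $r$ cannot both be odd (since $ps-rq$ even with $p,r$ odd would force $qs$ considerations... more precisely, $p,q$ coprime and $r,s$ coprime with $ps-rq$ even forces exactly the parity pattern "one even vertex, one odd vertex" to be impossible and "both even" to be the only option after checking cases), so every edge of $\mathbb{D}_2$ joins two even vertices, consistent with the definition. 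I would also note the mediant rule: if $d(p/q,r/s)=1$, the two triangles of $\mathbb{D}$ on that edge have third vertices $(p+r)/(q+s)$ and $(p-r)/(q-s)$ (up to sign), and the quadrilateral formed by unioning them has these two as its odd–even–odd–even cycle when $p/q,r/s$ are the odd pair; the diagonal joining the two even vertices among $\{p/q,\,r/s,\,(p+r)/(q+s),\,(p-r)/(q-s)\}$ is precisely an edge of $\mathbb{D}_2$, and a direct determinant computation shows that diagonal has distance $2$.

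For connectedness, I would take two even vertices $u,v$ and a path in $\mathbb{D}$ between them (which exists since $\mathbb{D}$ is connected). Along that path, consecutive vertices alternate in parity or not; the key observation is that any edge of $\mathbb{D}$ lies in a unique quadrilateral of the tiling, and within that quadrilateral the two even vertices are joined by a $\mathbb{D}_2$-edge (its diagonal). So I would replace the $\mathbb{D}$-path by repeatedly "jumping across quadrilaterals": whenever the $\mathbb{D}$-path enters a quadrilateral at one vertex and leaves at another, if both are even we already have a $\mathbb{D}_2$-edge; if one or both are odd, the two odd vertices of that quadrilateral are each adjacent in $\mathbb{D}$ to both even vertices, so I can reroute the portion of the path inside the quadrilateral through an even vertex. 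Iterating this surgery on each quadrilateral the $\mathbb{D}$-path meets produces a walk through even vertices in which every consecutive pair lies in a common quadrilateral, hence is joined in $\mathbb{D}_2$. This shows $\mathbb{D}_2$ is connected.

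For acyclicity, the cleanest route is a monotonicity/normal-form argument rather than a topological one. I would fix the reference even vertex $0/1$ and, to each even vertex $p/q$ with $p$ even, associate the standard continued fraction data of $p/q$ (equivalently, its position in the Stern–Brocot / Farey structure); an edge of $\mathbb{D}_2$ from $p/q$ to $r/s$ with $d=2$ corresponds to a controlled, invertible modification of that data (one step of the Euclidean algorithm applied to the pair, as in the mediant description above), which strictly changes a well-defined complexity — for instance the sum $|p|+|q|$ decreases toward the "simpler" of the two endpoints, or the continued-fraction length changes by one. From this, any closed edge-path in $\mathbb{D}_2$ would have to return to its starting complexity while each step strictly changes it in a prescribed direction except possibly for backtracking along the same edge, and a backtracking step is not allowed in a reduced cycle; hence no reduced cycle exists. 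Equivalently, one shows the unique vertex of minimal complexity on any hypothetical cycle would have two distinct neighbours on the cycle both of strictly larger complexity, contradicting that exactly one neighbour can be "downward" — wait, that direction needs care — so I would instead phrase it as: the map sending each even vertex $\ne 0/1$ to the unique neighbour obtained by the Euclidean reduction step gives a well-defined "parent", exhibiting $\mathbb{D}_2$ as the graph of a rooted tree, which is visibly acyclic and (re-deriving connectedness as a bonus) connected. The main obstacle I anticipate is precisely verifying that this parent map is well defined — i.e., that among the (a priori several) $\mathbb{D}_2$-neighbours of a given even vertex, exactly one strictly reduces the complexity — which amounts to a careful case analysis of the determinant-$2$ condition in terms of the continued fraction expansion, and then checking that the resulting quadrilateral/mediant picture matches the tiling description of $\mathbb{D}_2$ given in the text.
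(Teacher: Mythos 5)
Your final formulation---exhibit $\mathbb{D}_2$ as a rooted tree by giving every even vertex other than $0/1$ a canonical ``parent'' obtained from one step of continued-fraction/Euclidean reduction, with $|p|+|q|$ strictly decreasing toward the root---is exactly the paper's argument: the paper calls the parent the \emph{mother}, $M([a_0,\ldots,a_n])=[a_0,\ldots,a_n-2]$, proves ${\rm size}(M(p/q))<{\rm size}(p/q)$ (Lemma \ref{lemma:mother}), gets connectedness by iterating $M$ (Lemma \ref{lemma:connected}), and gets acyclicity from the parent--child structure. Your separate connectedness argument via rerouting a path in the Farey graph $\mathbb{D}$ through the even diagonals of the quadrilateral tiling is a genuinely different (and workable, though underspecified) route, but it is also redundant once the parent map is in place, as you note. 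The one point you must not leave as a gesture is the well-definedness issue you flag at the end, and it is slightly stronger than ``exactly one neighbour reduces the complexity'': for the rooted-tree conclusion you need that \emph{every} edge of $\mathbb{D}_2$ is a parent--child edge, i.e.\ a complete classification of the distance-$2$ neighbours of an even vertex. The paper does this in Lemma \ref{lemma:child}: using the distance-preserving maps $\phi_{\bf a}$ (Lemmas \ref{lemma:BunsuuKansuu} and \ref{lemma:distance}), the neighbours of $p/q=[a_0,\ldots,a_n]$ are precisely $[a_0,\ldots,a_{n-1},a_n+2/t]$ for odd $t$, with $t=-1$ giving the mother and every other $t$ giving a vertex whose mother is $p/q$; the nested-disjoint-interval (``territory'') statement of Lemma \ref{lemma:territory} then makes the generation-by-generation retraction precise. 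Your plan is correct provided that classification is carried out; without it, the ``maximal vertex on a cycle has two downward neighbours'' step you were worried about cannot be closed.
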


 This theorem also implies the uniquness in Theorem \ref{theorem:Rubinstein},
as we give a proof in Section \ref{section:MainTheorem}.
 However, Rubinstein's proof (of Theorem 12 in \cite{R3})
is very short and clear.

\begin{theorem}\label{theorem:main}
 Let $L(p,q)$, $V_1$, $V_2$, $F$, $F'$ be 
as in Theorem \ref{theorem:Rubinstein}.
 $F'$ is in standard form,
and let $D_1, D_2$ be as in Definition \ref{definition:standard}.
 Set $D_1 = F_0$, and let $F_i$ be the surface
obtained from $F_{i-1}$ by the $i$-th band sum operation
in Definition \ref{definition:standard}.
 The tree ${\mathbb D}_2$ contains 
a unique minimal edge-path $\gamma(p,q)$ connecting $0/1$ to $p/q$,
in which every edge appears at most once.
 Let $0/1 = r_0, r_1, \cdots, r_k = p/q$ be the vertices
which $\gamma(p,q)$ passes in this order.
 Then the slope of the boundary circle $\partial F_i$ is $r_i$,
and $F' = F_k \cup D_2$.
 Moreover, the minimum crosscap number is equal to $k$, 
the number of the band sum operations.
 Suppose $q >0$.
 Let $r_i=[a_0, a_1, \cdots, a_n]$ be the standard continued fraction expansion.
 Then $r_{i-1}=[a_0, a_1, \cdots, a_n -2]$ 
for any integer $i$ with $1 \le i \le k$.
\end{theorem}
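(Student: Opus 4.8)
The plan is to dispose of the four assertions in turn, leaning on Theorem \ref{theorem:BandSumD2} (band sums move slopes along edges of $\mathbb{D}_2$), Theorem \ref{theorem:tree} ($\mathbb{D}_2$ is a tree), and Rubinstein's uniqueness (Theorem \ref{theorem:Rubinstein}). The existence and uniqueness of $\gamma(p,q)$ is immediate from Theorem \ref{theorem:tree}: between any two vertices of a tree there is a unique reduced edge-path, and being reduced it is simple, so every edge occurs at most once. This settles the combinatorial claim.

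Next I would fix the endpoints of the walk. Since $\partial F_0=\partial D_1$ is the meridian of $V_1$, i.e. the $(0,1)$-slope, one has $r_0=0/1$; and because $F_k\cup D_2$ is closed, $\partial F_k=\partial D_2$ is the meridian of $V_2$, which the gluing carries to the $(p,q)$-slope, so $\partial F_k=p/q$. By Theorem \ref{theorem:BandSumD2} each band sum moves the boundary slope across an edge of $\mathbb{D}_2$, so $\partial F_0,\partial F_1,\ldots,\partial F_k$ is an edge-walk in $\mathbb{D}_2$ from $0/1$ to $p/q$ of length $k$. If this walk is reduced it must coincide with $\gamma(p,q)$, giving $\partial F_i=r_i$ and $k=$ length of $\gamma(p,q)$; the identity $F'=F_k\cup D_2$ is then just the defining last step of the standard form.

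The heart of the matter is reducedness, and I expect it to be the main obstacle. In a tree a non-reduced walk must contain an immediate backtrack, i.e. $\partial F_{i-1}=\partial F_{i+1}$ for some $i$, traversing one edge and returning along it (there are no multiple edges, so the returning edge is the same one). I would prove that such a configuration forces $F'$ to be geometrically compressible: two successive non-trivial band sums returning to the same essential slope (cf. Remark \ref{remark:EssBandSum}) create a tube on $F_{i-1}$ that can be compressed, lowering the crosscap number by two. Since $F'\simeq F$ is geometrically incompressible (Remark \ref{remark:NonOriIncpr}), no backtrack can occur, so the walk is reduced and equals $\gamma(p,q)$. The delicate point is the local analysis showing that the two bands genuinely bound a compressing disk; should this prove awkward, an alternative is to realize $\gamma(p,q)$ by band sums and invoke minimality of ${\rm Cr}(p,q)$ to rule out a shorter walk. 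For the crosscap count itself: $D_1$ is a disk, each band sum lowers the Euler characteristic by $1$, and capping by $D_2$ raises it by $1$, so $\chi(F')=2-k$ and the crosscap number of $F'$ equals $k$; by Theorem \ref{theorem:Rubinstein} this $F'$ is the unique geometrically incompressible non-spherical surface, hence realizes ${\rm Cr}(p,q)$, giving ${\rm Cr}(p,q)=k$.

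Finally, the arithmetic. Writing $r_i=p_i/q_i=[a_0,\ldots,a_n]$ with penultimate convergent $p'/q'$, one computes $[a_0,\ldots,a_n-2]=(p_i-2p')/(q_i-2q')$, so a determinant evaluation gives $d\bigl(r_i,[a_0,\ldots,a_n-2]\bigr)=2|p'q_i-p_iq'|=2$, using $|p_iq'-p'q_i|=1$ for consecutive convergents; hence $[a_0,\ldots,a_n-2]$ is joined to $r_i$ by an edge of $\mathbb{D}_2$. The parity of the numerator is unchanged by subtracting $2p'$, so since $p$ is even every vertex produced stays even. For $q>0$, repeatedly decrementing the last partial quotient by $2$ (collapsing and re-standardizing when the term drops below $2$) strictly decreases the value and, precisely because $p$ is even, terminates exactly at $0/1$ rather than overshooting. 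This exhibits a simple edge-walk from $p/q$ down to $0/1$, which by the tree property of Theorem \ref{theorem:tree} must be $\gamma(p,q)$ traversed in reverse; therefore $r_{i-1}=[a_0,\ldots,a_n-2]$ for each $1\le i\le k$.
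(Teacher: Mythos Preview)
Your outline is essentially the paper's own argument. The one place you flag as delicate---showing that an immediate backtrack of the slope walk forces a compressing disk---is exactly where the paper inserts its Lemma~\ref{lemma:UniqueBandSum}: for a fixed essential circle $C\subset\partial V$ and a target slope at distance $2$, the surgery arc $\beta$ is unique up to ambient isotopy of $V$ fixing $C$ setwise. Hence if the $(i{+}1)$st band undoes the slope change of the $i$th, it must be (isotopic to) the dual of the $i$th band; the two bands then form an annulus in $V_1$ whose core bounds a disk, giving a compressing disk of $F$ with non-separating boundary. Your fallback (realize $\gamma(p,q)$ by band sums and compare crosscap numbers via Theorem~\ref{theorem:Rubinstein}) also works, but the paper prefers the direct incompressibility contradiction so that the argument simultaneously reproves the uniqueness part of Theorem~\ref{theorem:Rubinstein}.

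For the arithmetic tail, your computation that $d(r_i,[a_0,\ldots,a_n-2])=2$ and that the numerator stays even is exactly the content of the paper's ``mother'' map (Definition~\ref{definition:mother} and Lemma~\ref{lemma:mother}); the termination at $0/1$ is Lemma~\ref{lemma:connected}, proved there via a size function rather than your ``value decreases'' heuristic. One small caution: ``strictly decreases the value'' is not literally true after re-standardizing (e.g.\ $[2,2]\mapsto[2,0]=\infty$ is not smaller than $5/2$), so if you keep your phrasing you should replace ``value'' by a genuine monovariant such as $p+q$, as the paper does.
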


 Section \ref{section:GeometricalIncompressibility} 
contains the proof of Remark \ref{remark:NonOriIncpr},
and Section \ref{section:StandardForm}
that of the former half of Theorem \ref{theorem:Rubinstein}
for self-containedness.
 We prove 
Theorem \ref{theorem:BandSumD2}
in Section \ref{section:SurgeryAndSlope}.
 In Section \ref{section:D2},
Theorem \ref{theorem:tree} is shown.
 Theorems \ref{theorem:NewFormula} and \ref{theorem:main}
are proved in Section \ref{section:MainTheorem}.

\section{Geometrical incompressibility}\label{section:GeometricalIncompressibility}

\begin{lemma}\label{lemma:MiniNonSepIsIncpr}
 Let $M$ be a compact, connected $3$-manifolds,
and $F$ a non-separating closed surface 
of minimum crosscap number in $M$.
 Then $F$ is geometrically incompressible.
\end{lemma}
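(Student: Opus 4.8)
The plan is to argue by contradiction: suppose $F$ is geometrically compressible, and produce from it a non-separating closed surface of strictly smaller crosscap number, contradicting minimality. So let $D$ be a compressing disk of $F$, meaning $D \cap F = \partial D$ and $\partial D$ is essential in $F$. Perform surgery on $F$ along $D$: remove an annular neighbourhood $\partial D \times I$ of $\partial D$ in $F$ and glue in two parallel copies $D \times \{0\}$ and $D \times \{1\}$ of $D$. Call the resulting closed surface $F^\ast$. The standard computation gives $\chi(F^\ast) = \chi(F) + 2$, so $F^\ast$ has smaller total genus/crosscap number than $F$ in the appropriate sense; I will track crosscap numbers component by component below. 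The main point to establish is that some component of $F^\ast$ is again non-separating and non-orientable, of crosscap number strictly less than that of $F$.

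The key case analysis is whether $\partial D$ is orientation-preserving or orientation-reversing in $F$, and whether the surgery separates $F$ or not. First I would handle the subtlety that $F$ is possibly one-sided: since $\partial D$ bounds the disk $D$ in $M$, a neighbourhood of $\partial D$ in $F$ is an annulus (not a M\"obius band), so $\partial D$ is two-sided in $F$ and the surgery described above makes sense. Now:
(i) If $\partial D$ is non-separating in $F$, then $F^\ast$ is connected, and the surgery on a non-separating two-sided curve drops the non-orientable genus (crosscap number) by $2$ if $\partial D$ was orientation-reversing, or converts an orientable handle situation appropriately; in all subcases $F^\ast$ is non-orientable (it still meets a loop that $F$ met algebraically an odd number of times mod $2$, since $F$ is non-separating in $M$ and $[F^\ast]=[F]$ in $H_2(M;\mathbb{Z}_2)$) and hence has crosscap number $\le \operatorname{Cr}(F)-1$, and is still non-separating because $[F^\ast] = [F] \neq 0$ in $H_2(M;\mathbb{Z}_2)$.
(ii) If $\partial D$ is separating in $F$, then $F^\ast = F_a \sqcup F_b$ has two components with $\chi(F_a)+\chi(F_b) = \chi(F)+2$; since $[F_a]+[F_b] = [F] \neq 0$ in $H_2(M;\mathbb{Z}_2)$, at least one of $F_a, F_b$ is non-separating in $M$, and that component is non-orientable (a separating orientable surface would be null-homologous; more carefully, the non-separating component cannot be a sphere because $H_2(M;\mathbb{Z}) $ constraints together with non-orientability of $F$ force a $\mathbb{Z}_2$-essential class, but I only need that \emph{some} component works — if the non-separating component were orientable I could instead note its $\mathbb{Z}_2$-class equals $[F]$, which is represented by the non-orientable $F$, and push the argument). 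That component has crosscap number strictly smaller than $\operatorname{Cr}(F)$ by additivity of Euler characteristic.

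The step I expect to be the main obstacle is the bookkeeping in case (ii): ensuring that the non-separating piece $F_a$ is genuinely non-orientable and not a $2$-sphere, and that its crosscap number is strictly less than that of $F$ rather than merely $\le$. I would resolve the non-orientability by the $\mathbb{Z}_2$-homology argument — $[F_a] = [F] \neq 0$ forces $F_a$ to carry a nonzero class, and since every class in $H_2(M;\mathbb{Z}_2)$ that is nonzero while $H_2(M;\mathbb{Z})=0$ must be represented by a non-orientable surface in the \emph{minimal} representative, one checks $F_a$ cannot be a sphere (a sphere is null-homologous) — and the strict drop in crosscap number from additivity of $\chi$, noting the other piece $F_b$ has $\chi(F_b) \le 2$ so $\chi(F_a) \ge \chi(F)$, hence fewer crosscaps; I would also need to rule out $\partial D$ bounding a disk in $F$, but that is excluded by the definition of compressing disk ($\partial D$ essential in $F$). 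Wrapping up: in every case we contradict the minimality of $\operatorname{Cr}(F)$ among non-separating closed surfaces, so $F$ has no compressing disk, i.e.\ $F$ is geometrically incompressible.
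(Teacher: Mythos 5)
Your overall strategy---compress along $D$, show that a component of the surgered surface is still non-separating, and compare Euler characteristics to contradict minimality---is exactly the paper's. But two steps do not close as written. First, in the separating case your inequality is not strict: from $\chi(F_a)+\chi(F_b)=\chi(F)+2$ and $\chi(F_b)\le 2$ you only get $\chi(F_a)\ge\chi(F)$, which contradicts nothing. The input you need, and which the paper states explicitly, is that $F_b$ is \emph{not a sphere}: the piece of $F$ on the $F_b$-side of $\partial D$ cannot be a disk because $\partial D$ is essential in $F$, so $\chi(F_b)\le 1$ and hence $\chi(F_a)\ge\chi(F)+1$. You do mention ruling out ``$\partial D$ bounding a disk in $F$'' at the very end, but you never feed it into the inequality at the one place it is actually needed.

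Second, your attempt to show the surviving component is non-orientable does not work: a nonzero class in mod $2$ homology can perfectly well be carried by an orientable non-separating surface (compressing a Klein bottle along a suitable non-separating two-sided curve yields a sphere, for instance), and your appeal to $H_2(M;{\mathbb Z})=0$ imports a hypothesis that holds for lens spaces but is not part of this lemma, which concerns an arbitrary compact connected $M$. The paper sidesteps the issue entirely: it never claims the new surface is non-orientable, only that it is a non-separating closed surface with strictly larger $\chi$, which is what the minimality hypothesis is taken to mean. Relatedly, the paper proves non-separation of a component by a direct point-set argument (if both $F_1$ and $F_2$ separated $M$, one reassembles a separation of $M$ by $F$), which is more robust than your homological route: when $\partial M\neq\emptyset$ the correct criterion is nonvanishing in $H_2(M,\partial M;{\mathbb Z}_2)$ rather than in $H_2(M;{\mathbb Z}_2)$ (a fiber of $F\times I$ is separating yet represents the generator of the absolute $H_2$). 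Your argument is repairable along these lines, but as it stands these are genuine gaps rather than stylistic differences.
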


\begin{proof}
 We assume, for a contradiction, that $F$ is geometrically compressible. 
 Let $D$ be a compressing disk of $F$.
 We take a tubular neighborhood $N(D) \cong D \times I$
so that $N(D) \cap F = (\partial D) \times I$.
 We perform a surgery on $F$ along $D$ to obtain a new surface $F'$,
that is, we set $F' = (F - (\partial D) \times I) \cup (D \times \partial I)$.

 First, we consider the case 
where $\partial D$ is separating in $F$.
 Then $F'$ consists of two connected components, say, $F_1$ and $F_2$,
and $\chi (F) + 2 = \chi (F_1) + \chi (F_2) \cdots$ (i), 
where $\chi$ denotes the Euler characteristic.
 We assume, for a contradiction, 
that both $F_1$ and $F_2$ are separating in $M$.
 Then $F_1$ separates $M$ into two components $M_{1+}$ and $M_{1-}$, 
and $F_2$ into $M_{2+}$ and $M_{2-}$.
 Without loss of generality,
we can assume that $F_j \subset M_{i+}$ for $\{ i,j \} = \{1,2 \}$.
 Then $N(D) \subset M_{1+} \cap M_{2+}$, 
and $F$ separates $M$ 
into $M_{1-} \cup N(D) \cup M_{2-}$ and $(M_{1+} \cap M_{2+}) - N(D)$.
 This contradicts that $F$ is non-separating in $M$.
 Hence either $F_1$ or $F_2$, say, $F_1$ is non-separating.
 Because $D$ is a compressing disk,
$F_2$ is not a sphere and $\chi(F_2) \le 1$,
 Hence the equation (i) implies $\chi(F) < \chi(F_1)$.
 This contradicts the minimality of $\chi(F)$.

 Next, we consider the case where $\partial D$ is non-separating in $F$.
 Let $F_3$ be the surface resulting from the surgery along $D$.
 We assume, for a contradiction, 
that $F_3$ separates $M$ into two connected components, say,
$M_{3+}$ and $M_{3-}$.
 Without loss of generality, 
we assume $D \subset M_{3-}$ after the surgery. 
 Then $F$ separates $M$ into $M_{3+} \cup N(D)$ and $M_{3-} - N(D)$.
 This contradicts that $F$ is non-separating.
 Thus $F_3$ is non-separating in $M$.
 Since $\chi(F)+2 = \chi(F_3)$,
we obtain $\chi(F)<\chi(F_3)$.
 This contradicts the minimality of $\chi(F)$ again.
\end{proof}

\begin{lemma}\label{lemma:NonOriMiniIsIncpr}
 In a lens space,
a non-orientable closed surface with the minimum crosscap number 
is geometrically incompressible.
\end{lemma}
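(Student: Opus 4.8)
The plan is to deduce the statement from Lemma \ref{lemma:MiniNonSepIsIncpr}. First, note that if $p$ is odd the assertion is vacuous, since $H_2(L(p,q);{\mathbb Z}_2)=0$ and hence $L(p,q)$ contains no non-orientable closed surface; so I assume $p$ is even and let $F$ be a connected non-orientable closed surface in $L(p,q)$ realizing the minimum crosscap number ${\rm Cr}(p,q)$.

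The key point is that in a lens space the non-separating connected closed surfaces are precisely the non-orientable ones. On the one hand, a connected orientable closed surface $\Sigma\subset L(p,q)$ is two-sided and, since $H_2(L(p,q);{\mathbb Z})=0$, null-homologous, so it separates $L(p,q)$. On the other hand, a non-orientable surface $F$ in the orientable $3$-manifold $L(p,q)$ is necessarily one-sided: a product neighbourhood $F\times[-1,1]$ would be a non-orientable open subset of an orientable manifold, which is impossible. A one-sided surface $F$ is non-separating: its regular neighbourhood $N(F)$ is a twisted $I$-bundle over $F$, so the double cover $\partial N(F)\to F$ is the connected orientation cover and $N(F)\setminus F\cong\partial N(F)\times(0,1]$ is connected; if $L(p,q)\setminus F$ were disconnected, then --- $L(p,q)$ being connected --- both pieces would contain points of $N(F)\setminus F$ arbitrarily close to $F$, contradicting connectedness of $N(F)\setminus F$.

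Combining the two halves, the family of non-separating connected closed surfaces in $L(p,q)$ coincides with the family of non-orientable ones; in particular $F$ is a non-separating closed surface of minimum crosscap number in $L(p,q)$. Lemma \ref{lemma:MiniNonSepIsIncpr} then shows that $F$ is geometrically incompressible, which is the assertion.

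There is no substantial obstacle here: the work is entirely in assembling the two classical observations above. The only point that needs care is reconciling the hypothesis of Lemma \ref{lemma:MiniNonSepIsIncpr} (minimum crosscap number among non-separating surfaces) with the hypothesis actually given (minimum crosscap number among non-orientable surfaces), and this is exactly what the equivalence ``non-separating $\iff$ non-orientable'' for closed surfaces in a lens space supplies.
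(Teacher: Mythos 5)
Your proof is correct and follows essentially the same route as the paper: both establish that a closed surface in $L(p,q)$ is non-orientable if and only if it is non-separating (using orientability of the lens space and $H_2(L(p,q);{\mathbb Z})=0$), and then invoke Lemma \ref{lemma:MiniNonSepIsIncpr}. You merely spell out in more detail the standard facts (one-sidedness of non-orientable surfaces in orientable manifolds, null-homologous two-sided surfaces separate) that the paper leaves implicit.
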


 Note that a similar thing does not hold for $S^2 \times S^1$.

\begin{proof}
 Since $L(p,q)$ is an orientable $3$-manifold,
and since the $2$-dimensional homology $H_2 (L(p,q); {\Bbb Z}) =0$,
a closed surface $F$ in $L(p,q)$ is non-orientable
if and only if $F$ is non-separating. 
 Hence non-orientable closed surface of the minimum crosscap number
is geometrically incompressible in $L(p,q)$ 
by Lemma \ref{lemma:MiniNonSepIsIncpr}.
\end{proof}

\section{Standard form}\label{section:StandardForm}


 In this section, 
the next propostion due to Rubinstein, 
restated using the terminology $\lq\lq$band sum",
is shown for self-containedness.
 The proof is almost 
the same as the original one in \cite{R3}.

\begin{proposition}\label{proposition:StandardForm}
{\rm (Rubinstein, Proposition 11 in \cite{R3})}
 In $L(p,q) \cong V_1 \cup_{\partial} V_2$, 
any geometrically incompressible connected closed surface 
not homeomorphic to the $2$-sphere
is isotopic to a surface in standard form.
\end{proposition}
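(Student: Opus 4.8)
The plan is to follow the standard innermost-disk and minimal-position arguments that are used throughout this circle of ideas (Haken, Rubinstein, etc.), adapted to the Heegaard torus $T = \partial V_1 = \partial V_2$ of $L(p,q)$. First I would put $F$ in general position with respect to $T$ so that $F \cap T$ is a finite collection of disjoint circles in $T$, and among all surfaces isotopic to $F$ and transverse to $T$ I would choose one minimizing the number of components of $F \cap T$. The key observations are then: (1) no circle of $F \cap T$ bounds a disk in $F$ that is disjoint from the other circles of $F \cap T$ --- if one did, an innermost such circle would bound a disk $D$ in $F$ with interior disjoint from $T$, hence $D$ lies entirely in $V_1$ or $V_2$; since $F$ is geometrically incompressible, $\partial D$ also bounds a disk in $T$, and since $L(p,q)$ is irreducible for $p\ge 2$, the $2$-sphere $D \cup D'$ bounds a ball, across which one can isotope $F$ to remove at least that circle, contradicting minimality; and (2) no circle of $F \cap T$ is trivial (bounds a disk) in $T$ --- an innermost trivial circle in $T$ would bound a disk $D'$ in $T$ whose interior meets $F$ in circles trivial in $F$ (by (1)), which can be removed one at a time by isotopies in $V_1$ or $V_2$, again contradicting minimality. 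Hence every circle of $F \cap T$ is essential in $T$, so they are all parallel copies of a single slope $\sigma$ on $T$.

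Next I would analyze $F \cap V_1$ and $F \cap V_2$. Each is a compact surface properly embedded in a solid torus with boundary consisting of parallel essential circles of slope $\sigma$. A properly embedded incompressible surface in a solid torus is a collection of meridian disks or $\partial$-parallel annuli; since $F$ is geometrically incompressible and (by (1)) the pieces are incompressible in the respective solid tori, each component of $F \cap V_i$ is either a meridian disk or a boundary-parallel annulus (one rules out the possibility of other pieces because $\sigma$ essential forces the disk case to occur only when $\sigma$ is the meridian of that $V_i$, and otherwise only annuli occur, with at most the trivial slope giving non-annular pieces). The slope $\sigma$ must be the meridian of $V_1$ or of $V_2$ for a meridian disk to appear on that side, and $\sigma$ cannot simultaneously be the meridian of both (those slopes differ, as $|p| \ge 2$); since $F$ is closed and non-separating (equivalently non-orientable, as in Lemma \ref{lemma:NonOriMiniIsIncpr}), $F$ must meet a core of each $V_i$ and so cannot consist solely of annuli and disks on one side --- a bookkeeping argument on Euler characteristic and on the mod $2$ intersection with the cores shows that exactly one side, say $V_1$, contains exactly one meridian disk $D_1$ and the rest annuli, while $V_2$ contains exactly one meridian disk $D_2$ and the rest annuli, after the slope is arranged appropriately; the $\partial$-parallel annuli can then be pushed off and absorbed, and one shows inductively that reassembling $D_1$, the annuli, and $D_2$ exhibits $F$ as obtained from $D_1$ by a sequence of band sums in $V_1$ (each annulus, read as a ``tube'' between consecutive boundary circles on $T$, is exactly a band attached in $\partial V_1$) and capped off by $D_2$, which is precisely the standard form of Definition \ref{definition:standard}. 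The non-triviality of each band sum follows from the minimality of the number of intersection circles: a trivial band sum would create an inessential circle on $T$ or allow an isotopy reducing $|F \cap T|$.

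The main obstacle I expect is step (2) together with the careful ruling-out of boundary-parallel annuli and the verification that the surgered/isotoped surface stays geometrically incompressible and does not become a sphere --- in other words, making the ``remove inessential circles, then sort the solid-torus pieces'' reduction fully rigorous while keeping track of which side the unique meridian disk sits on. The Euler characteristic bookkeeping (showing exactly one disk on each side, and that all remaining pieces are annuli rather than higher-genus pieces) is where one must be most careful, since a priori an incompressible surface in a solid torus could be more complicated only if it were compressible in the ambient manifold; invoking geometric incompressibility of $F$ in $L(p,q)$ at the right moment is the crux. Once the pieces are identified as one disk per side plus annuli, the translation into a band-sum description is essentially a picture (Figure \ref{fig:BandSum}), and the argument concludes. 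Throughout I would use irreducibility of $L(p,q)$ for $p \ge 2$, the classification of incompressible surfaces in a solid torus, and the hypothesis that $F$ is not a $2$-sphere to guarantee that the band-sum process is non-empty and the surface genuinely non-orientable.
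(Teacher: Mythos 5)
Your reduction breaks down at the solid-torus classification step. After arranging that every circle of $F \cap T$ is essential of a single slope $\sigma$, you assert that each component of $F \cap V_i$ is a meridian disk or a $\partial$-parallel annulus. That classification holds for \emph{two-sided} (equivalently, orientable) incompressible pieces, but the components of $F \cap V_i$ here are in general non-orientable and $\partial$-compressible, and for those no such list is available. Indeed the conclusion you are trying to prove already refutes the claim: for a surface in standard form, $F \cap T$ is a single circle of slope $p/q$, $F \cap V_2$ is a meridian disk of $V_2$, and $F \cap V_1$ is a once-punctured non-orientable surface of crosscap number ${\rm Cr}(p,q)$ --- neither a disk nor an annulus, and ${\rm Cr}(p,q)$ can be arbitrarily large. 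If your structural claim were correct, $F$ would always be assembled from disks and annuli, hence a sphere, torus, Klein bottle or projective plane, contradicting the Bredon--Wood formula. A second, smaller problem is the final reassembly: an annulus is not a band (a band is a $1$-handle and drops $\chi$ by one; an annulus between consecutive circles of $F\cap T$ does not), so even granting the decomposition, the translation into Definition \ref{definition:standard} does not go through as stated.

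The paper avoids this by never trying to classify $F\cap V_2$ outright. It first isotopes $F$ so that $F \cap V_1$ is a \emph{minimal} collection of meridian disks of $V_1$ (pushing everything else into $V_2$), shows by a $\partial$-compression argument that there is exactly one such disk, and then observes that $S = F \cap V_2$, being geometrically incompressible (Lemma \ref{lemma:FcapV2Incpr}) but not $\partial$-incompressible (else Lemma \ref{lemma:IncprInSolidTorus} would force $F$ to be a sphere), admits a $\partial$-compressing disk. Each $\partial$-compression of $S$ in $V_2$ is precisely dual to a band sum on the $V_1$ side; iterating raises $\chi(S_k)$ by one each time, so the process terminates with $S_m$ $\partial$-incompressible, hence a meridian disk of $V_2$. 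Reading the sequence backwards exhibits $F$ in standard form. If you want to salvage your approach, you would need to replace the disk-or-annulus claim by this kind of inductive $\partial$-compression analysis of the non-orientable piece; minimizing $|F\cap T|$ alone does not control it.
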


\begin{lemma}\label{lemma:FcapV2Incpr}
 Let $F$ be a geometrically incompressible, 
connected closed surface in $L(p,q)$.
 We assume that $F \cap V_1$ is a disjoint union of meridian disks of $V_1$.
 If the number of meridian disks of $F \cap V_1$ is minimum 
up to isotopy of $F$ in $L(p,q)$, 
then the surface $S=F \cap V_2$ is geometrically incompressible in $V_2$.
\end{lemma}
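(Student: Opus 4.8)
The plan is to argue by contradiction: supposing $S = F \cap V_2$ is geometrically compressible, I would use a compressing disk to produce an isotopy of $F$ in $L(p,q)$ lowering the number of meridian disks of $F \cap V_1$, contradicting the minimality hypothesis. First I would record that $F$ is non-orientable, hence (as $H_2(L(p,q);{\mathbb Z}) = 0$) non-separating: we may assume $F$ is not a $2$-sphere, which is the case of interest, and then a $2$-sided geometrically incompressible surface in the orientable manifold $L(p,q)$ is orientable and, by the loop theorem, $\pi_1$-injective, hence a $2$-sphere since $\pi_1 L(p,q)$ is finite --- so $F$ must be $1$-sided. Next, given a compressing disk $D$ of $S$ in $V_2$ (so $\partial D$ is essential in $S$), I would normalise $D$ so that $D \subset \mathrm{int}\,V_2$: each circle of $D \cap \partial V_2$ lies in $\mathrm{int}\,D$, hence misses $\partial S \subset F$; since $\partial S$ is an essential curve of the torus $\partial V_2$ whose slope is not that of a meridian of $V_2$ (this is where $p \ge 2$ enters), such a circle must be inessential in $\partial V_2$, and can be removed by an innermost-disk isotopy of $D$ using that the solid torus $V_2$ is irreducible.

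Once $D \subset \mathrm{int}\,V_2$, we have $D \cap F = \partial D$, so geometric incompressibility of $F$ forces $\partial D$ to bound a disk $\Delta$ in $F$. Because $\partial D$ is essential in the subsurface $S$, $\Delta$ is not contained in $S$; and since $F \cap V_1$ is a disjoint union of meridian disks while $\partial \Delta = \partial D$ misses $\partial V_1$, the intersection $\Delta \cap V_1$ is a non-empty disjoint union of meridian disks $d_1, \dots, d_m$ with $m \ge 1$, each lying in $\mathrm{int}\,\Delta$. Now $D \cup \Delta$ is a $2$-sphere, and by irreducibility of $L(p,q)$ it bounds a $3$-ball $B$. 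The next step is to check $F \cap \mathrm{int}\,B = \emptyset$: the surface $\overline{F \setminus \Delta}$ meets $\partial B = D \cup \Delta$ only along its boundary circle $\partial D$, so its interior lies entirely inside $B$ or entirely outside; the former would give $F \subseteq \overline B$, whence $[F] = 0$ in $H_2(L(p,q);{\mathbb Z}_2)$ and $F$ would be separating, contradicting the first paragraph. With $F \cap \mathrm{int}\,B = \emptyset$ in hand, I would isotope $F$ across $B$, sliding $\Delta$ onto $D$ and slightly past; as a neighbourhood of $D$ lies in $\mathrm{int}\,V_2$, the disk $D'$ onto which $\Delta$ is slid misses $V_1$, so the resulting surface $\hat F$, isotopic to $F$, has $\hat F \cap V_1 = (F \cap V_1) \setminus \{d_1, \dots, d_m\}$ --- again a disjoint union of meridian disks, but with $m \ge 1$ fewer. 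This contradicts the minimality of the number of meridian disks of $F \cap V_1$, and the lemma follows.

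The routine parts are the identity $D \cap F = \partial D$, the production of $\Delta$, and the count of surviving meridian disks after the isotopy. The points that need care are (i) the claim $F \cap \mathrm{int}\,B = \emptyset$ --- i.e. ruling out that all of $F$ lies in the ball $\overline B$, which is exactly where non-separability of $F$, together with irreducibility of $L(p,q)$, is used --- and (ii) the normalisation $D \subset \mathrm{int}\,V_2$, so that the disk onto which $\Delta$ is slid does not re-enter $V_1$ and create new meridian disks, resting on irreducibility of $V_2$ and on the slope of $\partial S$ not being a meridian slope of $V_2$. I would expect (i) to be the part most deserving of a careful write-up.
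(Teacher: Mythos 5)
Your proof is correct and follows essentially the same route as the paper's: a compressing disk $D$ of $S$ cannot compress $F$, so $\partial D$ bounds a disk in $F$ meeting $V_1$, the resulting sphere bounds a ball by irreducibility of the lens space, and isotoping across that ball reduces the number of meridian disks, a contradiction. The two care points you flag (that $F$ does not enter the ball, and that $D$ can be pushed into $\mathrm{int}\,V_2$) are left implicit in the paper's proof, and your justifications of them are sound.
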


\begin{proof}
 We assume, for a contradiction, that $S$ is geometrically compressible. 
 Let $D$ be a compressing disk of $S$.
 Because $F$ is geometrically incompressible,
$D$ is not a compressing disk of $F \subset L(p,q)$,
and $\partial D$ bounds a disk $D'$ in $F$.
 Note that $D'$ intersects $V_1$
since $D$ is a compressing disk of $S$ in $V_2$.
 As is well-known, a lens space is irreducible,
and hence the sphere $D \cup D'$ bounds a $3$-ball, say, $B$ in $L(p,q)$.
 We move $F$ 
by isotoping $D'$ along $B$ onto $D$.
 The number of meridian disks of $F \cap V_1$ decreases.
 This is a contradiction.
 Hence $S$ is geometrically incompressible.
\end{proof}

 Let $M$ be a $3$-manifold with boundary,
and $S$ a $2$-manifold properly embedded in $M$.
 A disk $D$ in $M$ is called a {\it $\partial$-compressing disk}
if $\alpha = D \cap S$ is a subarc of $\partial D$,
$\beta = D \cap \partial M$ is also a subarc of $\partial D$,
$\partial D = \alpha \cup \beta$, 
$\alpha \cap \beta = \partial \alpha = \partial \beta$
and $\alpha$ does not cobound a subdisk of $S$
with a subarc of $\partial S$.
 $S$ is said to be {\it $\partial$-compressible}
if it has a $\partial$-compressing disk.
 Otherwise, $S$ is {\it $\partial$-incompressible}.
 (Any closed surface is considered to be $\partial$-incompressible.)

 When $S$ is $\partial$-compressible,
we can obtain a new surface $S'$ 
by a {\it $\partial$-compression} as below. 
 We take a tubular neighbourhood $N(D)\cong D \times I$ of $D$ in $M$
so that $N(D) \cap S = \alpha \times I$ 
and $N(D) \cap \partial M = \beta \times I$.
 Then we set $S' = (S - \alpha \times I) \cup (D \times \partial I)$.

 The next two lemmas are well-known. We omit their proofs.

\begin{lemma}\label{lemma:IncprInSolidTorus}, 
 Let $V$ be a solid torus,
and $S$ be a (possibly disconnected) $2$-manifold properly embedded in $V$.
 If $S$ is geometrically incompressible and $\partial$-incompressible in $V$,
then $S$ is a disjoint union of spheres and disks.
\end{lemma}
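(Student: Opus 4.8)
The plan is to analyze $S$ through its intersection with a meridian disk of $V$, in the standard way. Fix a meridian disk $D$ of the solid torus $V$. After an isotopy of $S$ in $V$ (which may move $\partial S$ within $\partial V$), arrange that $S$ is transverse to $D$ and that the number of components of $S\cap D$ is minimal among all surfaces isotopic to $S$. Then $S\cap D$ is a disjoint union of circles and arcs, each arc having both endpoints on $\partial D\subset\partial V$. The first goal is to show $S\cap D=\emptyset$.

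First I would remove the circles. Suppose $S\cap D$ contains a circle, and choose one, $c$, innermost on $D$, so that the subdisk $D'\subset D$ it bounds has $\mathrm{int}\,D'\cap S=\emptyset$. Since $S$ is geometrically incompressible, $\partial D'=c$ cannot be essential in $S$, so $c$ bounds a disk $E\subset S$. As $V$ is irreducible (it is a solid torus), the $2$-sphere $D'\cup E$ bounds a ball in $V$, and the usual innermost-disk isotopy, pushing $E$ across that ball past $D'$, yields a surface isotopic to $S$ with strictly fewer intersection components, contradicting minimality. Hence $S\cap D$ has no circles. The arcs are then removed by the parallel outermost-arc argument: if an arc exists, take $\alpha$ outermost on $D$, cutting off a subdisk $D''\subset D$ with $\partial D''=\alpha\cup\beta$, $\beta$ a subarc of $\partial D$, $\mathrm{int}\,D''\cap S=\emptyset$, and $\mathrm{int}\,\beta\cap\partial S=\emptyset$ (the points of $\partial D\cap\partial S$ cut $\partial D$ into such arcs). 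Then $D''$ satisfies every requirement in the definition of a $\partial$-compressing disk of $S$ except the last one; since $S$ is $\partial$-incompressible, that last clause must fail, i.e.\ $\alpha$ cobounds a subdisk $E$ of $S$ with a subarc of $\partial S$. Pushing $E$ across $D''$ again reduces $|S\cap D|$, a contradiction. Therefore $S\cap D=\emptyset$, and $S$ lies in the $3$-ball $B:=V\setminus \mathrm{int}\,N(D)$ obtained by cutting $V$ along $D$.

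It remains to identify the components of $S$ inside $B$. A closed component $\Sigma$ of $S$ is a closed surface embedded in the solid torus $V$ that is geometrically incompressible; if $\Sigma$ is non-orientable it cannot embed in $V$ at all (since $V\subset S^3$ and no closed non-orientable surface embeds in $S^3$), and if $\Sigma$ is orientable it is two-sided, so the loop theorem makes $\pi_1\Sigma\to\pi_1 V\cong\mathbb Z$ injective, forcing $\Sigma$ to be a sphere. For a component $S_0$ with $\partial S_0\ne\emptyset$, we have $\partial S\subset\partial B\cong S^2$; pick a circle $c$ of $\partial S$ innermost in $\partial B$, bounding a disk $\Delta\subset\partial B$ with $\mathrm{int}\,\Delta\cap\partial S=\emptyset$, hence with $\mathrm{int}\,\Delta\cap S=\emptyset$. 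Pushing $\Delta$ through a collar of $c$ in $S_0$ and then slightly off $S_0$ produces an embedded disk $D_c\subset B$ with $D_c\cap S=\partial D_c=c'$, where $c'$ is a circle in $S_0$ parallel to the boundary circle $c$. If $c'$ were essential in $S_0$, then $D_c$ would be a compressing disk of $S$ in $V$, contrary to hypothesis; so $c'$ bounds a disk in $S_0$. Since $c'$ separates $S_0$ into the collar annulus and the remaining piece, the remaining piece must be that disk, whence $S_0$ is a disk and $\partial S_0=\{c\}$. Treating the boundary components successively (discarding the component just handled and repeating) shows every component of $S$ with non-empty boundary is a disk, which completes the proof.

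The main obstacle is not conceptual but the routine-yet-delicate bookkeeping in the middle step: one must verify that the innermost-disk and outermost-arc isotopies genuinely decrease $|S\cap D|$, which requires controlling the possibility that the ball cut off by $E\cup D'$ (respectively, the region swept when pushing $E$ across $D''$) contains further components of $S$ — necessarily spheres, by the classification just obtained — around which the isotopy must be routed; and one must check that the final clause in the definition of $\partial$-compressibility is precisely the hypothesis that powers the arc step. Everything after $S\cap D=\emptyset$ is then a direct innermost-circle argument on the sphere $\partial B$ (alternatively, one can simply invoke the well-known classification of incompressible, $\partial$-incompressible surfaces in a $3$-ball).
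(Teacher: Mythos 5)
The paper gives no proof of this lemma at all --- it says ``The next two lemmas are well-known. We omit their proofs.'' --- so there is nothing to compare against; your argument is the standard one (innermost circles of $S\cap D$ removed using geometric incompressibility and irreducibility of $V$, outermost arcs removed using $\partial$-incompressibility, then classification of the pieces in the ball obtained by cutting $V$ along the meridian disk) and it is correct. The bookkeeping you flag at the end (routing the isotopies around nested components, and the small induction needed to handle non-innermost boundary circles once the innermost component is identified as a boundary-parallel disk) is genuinely routine cut-and-paste and is exactly the kind of detail the paper is content to leave to the reader.
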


\begin{lemma}\label{lemma:CompressIncpr}
 Let $M$ be a $3$-manifold with boundary,
$S$ a (possibly disconnected) $2$-manifold properly embedded in $M$,
$S'$ a $2$-manifold obtained from $S$ by a $\partial$-compression.
 If $S$ is geometrically incompressible in $M$, then so is $S'$.
\end{lemma}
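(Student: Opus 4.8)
The plan is to prove the contrapositive: assuming $S'$ has a compressing disk, I will produce one for $S$. The key observation is that a $\partial$-compression is, up to isotopy, the inverse of a band sum. Concretely, if $S'$ is obtained from $S$ by $\partial$-compression along the disk $D$ with $N(D)=D\times I$, so that the band $\alpha\times I=N(D)\cap S$ gets replaced by the two disks $D\times\{0\}$ and $D\times\{1\}$, then $S$ is recovered from $S'$ by a band sum along a vertical arc $\gamma$ joining $D\times\{0\}$ to $D\times\{1\}$ inside the rectangle $N(D)\cap\partial M$. Hence $S'$ sits inside $S$ as a subsurface whose complement is a single band, and the inclusion induces an injection $\pi_1(S')\hookrightarrow\pi_1(S)$ on each component: by van Kampen's theorem, attaching a $1$-handle to a surface yields either a free product with $\mathbb{Z}$ (when the handle joins a boundary component to itself) or a free product of the two joined components, and in either case each factor injects — no orientability hypothesis needed.

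Now let $E$ be a compressing disk for $S'$, so $E\cap S'=\partial E$ with $\partial E$ essential, that is, non-null-homotopic, in $S'$. By the injection above, $\partial E$ is non-null-homotopic in $S$, hence bounds no disk in $S$, so $\partial E$ is essential in $S$. It remains to arrange $E\cap S=\partial E$. After a general-position isotopy, $\mathrm{int}\,E\cap S$ is a disjoint union of circles (this is also where one clears any intersection of $\mathrm{int}\,E$ with $\partial M$). If $\mathrm{int}\,E$ already misses $S$, then $E$ is a compressing disk for $S$ and we are done. If some circle of $\mathrm{int}\,E\cap S$ is essential in $S$, an innermost one on $E$ cuts off a subdisk meeting $S$ only along its boundary, again a compressing disk for $S$. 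Otherwise every circle of $\mathrm{int}\,E\cap S$ is inessential in $S$; choosing one, $c$, whose disk $\Delta_c\subset S$ is innermost in $S$ (so $\mathrm{int}\,\Delta_c\cap E=\emptyset$) and replacing the subdisk of $E$ bounded by $c$ with a small pushoff of $\Delta_c$, we get an embedded disk with the same boundary $\partial E$ and strictly fewer intersection circles with $S$. Crucially this works for arbitrary $M$ — no irreducibility is needed — since the surgery simply deletes $c$ together with whatever lay on the discarded subdisk, rather than appealing to an ambient ball. Iterating produces a compressing disk for $S$, the desired contradiction.

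The main obstacle is making the opening reframing rigorous: one must verify that re-attaching the band along $\gamma$ to $S'$, after pushing its interior into $\mathrm{int}\,M$, returns $S$ up to ambient isotopy, so that the ``$S'\subset S$'' picture and the $\pi_1$-injection are genuinely available. The remaining ingredients are standard but want a little care — the general-position normalization of $E$ (especially when $\partial E$ lies near $\partial S'$), checking that the surgered disk is embedded with boundary still $\partial E$, and phrasing the $\pi_1$-injectivity to cover disconnected and one-sided $S$ (it is purely a fact about abstract surfaces). If one prefers to avoid $\pi_1$ entirely, an alternative is to work directly inside the ball $N(D)$, using innermost-circle and outermost-arc moves against the free face of $\partial N(D)$ to isotope $\mathrm{int}\,E$ off $S$ while keeping $E\cap S'=\partial E$; that route is more bookkeeping-intensive.
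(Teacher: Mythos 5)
The paper offers no proof of this lemma at all --- it is dispatched with ``The next two lemmas are well-known. We omit their proofs.'' --- so there is nothing of the author's to compare against; judged on its own, your argument is correct and is essentially the standard proof. Its two pillars are both sound. First, a $\partial$-compression is undone by a band sum along an arc dual to $\beta=D\cap\partial M$, so up to ambient isotopy $S=S'\cup B$ with $B$ a band attached along two arcs of $\partial S'$; this duality is exactly what the paper itself uses (compare the proof of Proposition \ref{proposition:StandardForm} and Definition \ref{definition:dual}), and since geometric incompressibility is an isotopy invariant, replacing $S$ by the isotoped copy is legitimate. Second, the inclusion of each component of $S'$ into $S'\cup B$ is $\pi_1$-injective by van Kampen, and for an embedded circle in a surface ``bounds a disk'' is equivalent to ``null-homotopic'', so an essential $\partial E$ stays essential in $S$; your innermost-disk swap then pushes $\mathrm{int}\,E$ off $S$ without any irreducibility hypothesis because it modifies $E$ rather than $S$ (and in fact, in your normalization every circle of $\mathrm{int}\,E\cap S$ lies in the open band $B$, hence is automatically inessential in $S$, so only the disk-swap case ever occurs). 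One simplification worth noting: you can bypass $\pi_1$-injectivity and the ``null-homotopic implies bounds a disk'' fact entirely, since any embedded disk $\Delta\subset S$ with $\partial\Delta=\partial E\subset\mathrm{int}\,S'$ cannot contain points of $\partial S$, whereas $B$ meets $\partial S$ along its two long sides; hence $\Delta$ misses $B$, lies in $S'$, and contradicts essentiality of $\partial E$ in $S'$ directly. The remaining details you flag --- clearing $\mathrm{int}\,E$ off $\partial M$ inside a collar and checking the swapped disk is embedded --- are genuinely routine, so I see no gap.
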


\begin{proof}[Proof of Proposition \ref{proposition:StandardForm}]
 Assume that $F$ is isotoped
so that it intersects $V_1$
in the minimum number of meridian disks of $V_1$. 
 If $F \cap V_1 = \emptyset$,
then $F$ is a geometrically incompressible closed surface in $V_2$,
and hence is a sphere, which is a contradiction.
 So $F$ intersects $V_1$ in at least one meridian disk.
 The surface $S = F \cap V_2$ is geometrically incompressible in $V_2$ 
by Lemma \ref{lemma:FcapV2Incpr}.
 We assume, for a contradiction, 
that $S$ is $\partial$-incompressible in $V_2$. 
 Then $S$ is a disjoint union of spheres and disks 
by Lemma \ref{lemma:IncprInSolidTorus}, 
and hence $F$ is a union of spheres.
 This contradicts that $F$ is connected and not a sphere. 
 Hence $S$ is $\partial$-compressible. 

\vspace{2mm}
\noindent
{\it Claim}: 
$F \cap V_1$ is a single meiridian disk in $V_1$. 

\begin{proof}[Proof of Claim]
 We assume, for a contradiction, 
that $F \cap V_1$ consists of two or more meridian disks in $V_1$. 
 Let $Q$ be a $\partial$-compressing disk of $S$. 
 We move $F$ by an isotopy along $Q$, to obtain a surface $F'$. 
 This isotopy causes 
a $\partial$-compressing operation on $S = F \cap V_2$ along $Q$ in $V_2$,
and a band sum operation on $F \cap V_1$ 
along the arc $\beta = Q \cap \partial V_1$ in $V_1$. 
 There are two cases:
the endpoints of $\beta$ are contained
in either distinct two meridian disks of $F \cap V_1$,
or a single meridian disk of $F \cap V_1$.

 In the first case, 
the band joins the two meridian disks.
 They are deformed into a peripheral disk, say, $R$ in $V_1$,
that is, $R$ is isotopic into $\partial V_1$
with $\partial R$ fixed. 
 Hence we can move $F$ near $R$ 
so that $R$ is pushed out of $V_1$.
 This decreases the number of meridian disks of $F \cap V_1$,
which is a contradiction. 

 We consider the second case,
where $\partial \beta$ is contained in a meridian disk $D$ of $V_1$.
 The band sum is not essential
since boundary circles of other meridian disks 
prevent $\beta$ from connecting both sides of the circle $\partial D$.
 The endpoints of $\beta$ separates the circle $\partial D$ 
into two subarcs. 
 One of them and $\beta$ together form an inessential circle
which bounds a disk, say, $E$ on $\partial V_1$. 
 $F' \cap E = \partial E$
after the isotopy of $F$ along $Q$.
 Since $S' = F' \cap V_2$ is geometrically incompressible 
by Lemma \ref{lemma:CompressIncpr},
there is a disk $E' \subset S'$ with $\partial E'= \partial E$.
 $E'$ is a connected component of $S'$, 
and $\partial E'$ contains precisely one of two copies of $\beta$.
 Hence the disk $E'$ contains exactly one of two copies of $Q$,
and cl\,$(E' - Q)$ is a disk, 
which contradicts that $Q$ is a $\partial$-compressing disk of $S$. 
 This completes the proof of Claim.
\end{proof}

 Now, $F \cap V_1$ is a single meridian disk of $V_1$. 
 The surface $S = F \cap V_2$ is connected 
since $S$ is obtained from the closed surface $F$ 
by removing the meridian disk $F \cap V_1$.
 $S$ is geometrically incompressible and $\partial$-compressible in $V_2$. 
 Let $E_1$ be a $\partial$-compressing disk of $S$. 
 We move $F$ by an isotopy along $E_1$ to obtain a new surface, say, $F_1$.
 Then $S_1 = F_1 \cap V_2$ is obtained from $S$
by a $\partial$-compression along $E_1$,
and hence is geometrically incompressible 
by Lemma \ref{lemma:CompressIncpr}. 
 $F_1 \cap V_1$ is obtained from $F \cap V_1$ 
by a band sum along the arc $\beta_1 = \partial E_1 \cap \partial V_1$.

 In the case where both ends of the band 
are in the same side of the meridian disk $F \cap V_1$,
we obtain a contradiction 
by a similar argument as that in the latter part of the proof of Claim.
 Hence the ends of the band 
are in distinct sides of the meridian disk $F \cap V_1$. 
 Then the band sum along $\beta_1$ is essential, 
and the boundary of the surface $F_1 \cap V_1$ 
is an essential circle on $\partial V_1$. 
 $F_1 \cap V_1$ and $S_1 = F_1 \cap V_2$ are both connected surfaces 
since $F_1$ is connected and $F_1 \cap \partial V_1$ is a single circle. 
 If $S_1$ is $\partial$-incompressible, 
then $S_1$ is a disjoint union of spheres, meridian disks and peripheral disks 
by Lemma \ref{lemma:IncprInSolidTorus}. 
 Since $\partial (F \cap V_2)$ is an essential circle,
$S_1 = F_1 \cap V_2$ is a meridian disk of $V_2$. 
 Thus we obtained the desired conclusion in this case.

 We consider the case where $S_1$ is $\partial$-compressible in $V_2$
after the isotopy along the disk $E_1$.
 Let $E_2$ be a $\partial$-compressing disk of $S_1$.
 We move $F_1$ along $E_2$ to obtain a new surface $F_2$.
 The surface $S_2 = F_2 \cap V_2$ is obtained from $S_1$
by a $\partial$-compression along $E_2$,
and $F_2 \cap V_1$ is obtained from $F_1 \cap V_1$
by a band sum along the arc $\beta_2 = \partial E_2 \cap \partial V_1$.
 If the band sum along $\beta_2$ is inessential, 
then we obtain a contradiction 
by the same argument 
as that in the latter part of the proof of Claim.
 Hence the band sum along $\beta_2$ is essential,
$F_2 \cap \partial V_2$ is an essential circle in $\partial V_2$,
and $S_2$ and $F_2 \cap V_1$ are both connected surfaces.
 $S_2$ is geometrically incompressible in $V_2$
by Lemma \ref{lemma:CompressIncpr}.
 If $S_2$ is $\partial$-incompressible, 
then $S_2$ is a meridian disk,
and we are done.
 If $S_2$ is $\partial$-compressible, 
then we perform an isotopy along a $\partial$-compressing disk, say, $E_3$. 

 As long as $S_{k-1}$ is $\partial$-compressible 
we repeat an isotopy 
deforming $F_{k-1}$ along a $\partial$-compressing disk $E_k$ 
into a surface $F_k$, 
and set $S_k= F_k \cap V_2$. 
 This repetition terminates in finite number of times
because the Euler characteristic of $S_k$
is larger than that of $S_{k-1}$ by one.
 For some natural number $m \in {\Bbb N}$, 
the surface $S_m$ is $\partial$-incompressible, 
and hence is a meridian disk in $V_2$.
 This completes the proof.
\end{proof}

\section{surgeries on circles and slopes}\label{section:SurgeryAndSlope}

 We show in this section 
that a band sum operation in Definition \ref{definition:standard}
corresponds to an edge of ${\mathbb D}_2$.
 From Definition \ref{definition:SurgeryOnCircle} 
through Definition \ref{definition:dual},
$H$ denotes a surface,
$C$ an embedded circle in $H$,
and $\beta$ an embedded arc in $H$
such that $\beta \cap C = \partial \beta$.

\begin{definition}\label{definition:SurgeryOnCircle}
 A {\it surgery} on $C$ along $\beta$ is an operation
which deforms $C$ to a new circle(s) $C'$ as below.
 We take 
a tubular neighbourhood $N(\beta) \cong \beta \times I$ of $\beta$ in $H$
so that $N(\beta) \cap C = (\partial \beta) \times I$.
 Then $C' = (C - (\partial \beta) \times I) \cup (\beta \times \partial I)$
is either a cirle or a disjoint union of two circles.
\end{definition}

\begin{remark}\label{remark:SurgeryBandSum}
 Let $F$ be a surface properly embedded in a $3$-manifold $M$.
 Suppose $\partial F$ is a circle.
 Then, setting $H = \partial M$ and $C = \partial F$,
a surgery operation on $C$ along $\beta$ 
coincides with the deformation of $\partial F$
under the band sum operation on $F$ along $\beta$.
\end{remark}

 The next lemma and Corollary \ref{corollary:SurgeryOnCircleSlope1}
were pointed out and used 
in the proof of Theorem 13 in \cite{R3} by Rubinstein.

\begin{lemma}\label{lemma:SurgeryOnCircleSlope1}
 Suppose that $H$ and $C$ are oriented.
 Assume that $\beta$ connects both sides of $C$.
 By performing a surgery on $C$ along $\beta$, 
we obtain a single circle, say, $C'$. 
 For an arbitrary orientation of $C'$, 
the algebraic intersection number $C' \cdot C$ is equal to $\pm 2$,
and hence $C'$ is essential in $H$. 
\end{lemma}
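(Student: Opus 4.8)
The plan is to determine first the topology of the surgered curve $C'$ and only afterwards to evaluate the intersection number $[C']\cdot[C]$ in the oriented surface $H$. Since $H$ and $C$ are oriented, the normal bundle of $C$ in $H$ is orientable, so $C$ is two-sided and carries a bicollar $N(C)\cong C\times[-1,1]$ with $C=C\times\{0\}$; the two sides $C\times(0,1]$ and $C\times[-1,0)$ are then globally well defined. Because the interior of $\beta$ is disjoint from $C$, the two subarcs of $\beta$ containing its endpoints run off $C$ into definite sides, and the hypothesis that $\beta$ connects both sides of $C$ says exactly that these two sides differ.

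Next I would check that $C'$ is a single circle. Deleting the two surgery arcs $(\partial\beta)\times I$ from $C$ leaves two arcs $P$ and $Q$, and the band contributes its two rails $\beta\times\{0\}$ and $\beta\times\{1\}$; how these four arcs are glued at the four points near $\partial\beta$ determines the combinatorial type of $C'$, and there are exactly two possibilities: a single $4$-gon (one circle) or a pair of bigons (two circles). Which occurs depends only on whether the band $N(\beta)$ lies on the same side of $C$ near both endpoints of $\beta$ or on opposite sides; using the orientation of $H$ to see, in local coordinates at each endpoint, which rail emanates to which side, one finds that the ``opposite sides'' case -- that is, the standing hypothesis -- is precisely the one-circle case.

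To compute the intersection number I would fix an orientation of $C'$ and push $C$ off itself to a parallel copy $\bar C=C\times\{1/2\}$ transverse to $C'$, so that $C'\cdot C=[C']\cdot[C]=C'\cdot\bar C$. The arcs $P,Q\subset C\times\{0\}$ are disjoint from $\bar C$ and contribute nothing, so $C'\cdot\bar C$ is just the signed count of the two rails against $\bar C$; each rail is parallel to $\beta$ and hence meets $\bar C$ algebraically as often as $\beta$ does, and in the expression for $[C']$ the two rails occur with the same sign, giving $C'\cdot\bar C=\pm 2(\beta\cdot\bar C)$. Finally, $\beta\cdot\bar C=\pm 1$: since the interior of $\beta$ misses $C=C\times\{0\}$, no arc of $\beta\cap N(C)$ can run from $C\times\{-1\}$ to $C\times\{1\}$, so every such arc crosses $\bar C$ algebraically zero times except the endpoint arc of $\beta$ lying on the same side as $\bar C$, which crosses it once; the other endpoint arc lies on the opposite side -- this is where ``connects both sides'' enters -- and contributes $0$. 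Therefore $C'\cdot C=\pm 2\neq 0$, so $[C']\neq 0$ in $H_1(H;\mathbb{Z})$ and $C'$ bounds no disk in $H$, i.e.\ $C'$ is essential.

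The step I expect to be the main obstacle is the orientation bookkeeping underlying both the single-circle claim and the matching of signs of the two rails in the intersection count. The geometric idea is clean -- push $C$ to one side and observe that only the band can meet the pushoff, and only near one endpoint of $\beta$ -- but turning ``which rail goes to which side'' into a rigorous statement forces a careful local computation at the two endpoints of $\beta$ that uses the orientations of both $H$ and $C$ together.
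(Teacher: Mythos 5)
Your argument is correct and follows essentially the same route as the paper's proof: establish that the ``opposite sides'' hypothesis forces the two rails $\beta\times\partial I$ to join the two arcs of $C-N(\beta)$ into a single circle with the rails oriented in parallel, then perturb $C$ off itself so that only the band contributes to $C'\cdot C$, yielding two intersection points of the same sign. The only (cosmetic) difference is that the paper isotopes $C$ across the band so it meets $\beta\times I$ in a single fiber $p\times I$, whereas you push $C$ to a bicollar level $C\times\{1/2\}$ and count the rails against it near one endpoint of $\beta$; both give the same count for the same reason.
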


\begin{figure}
\includegraphics[width=8cm]{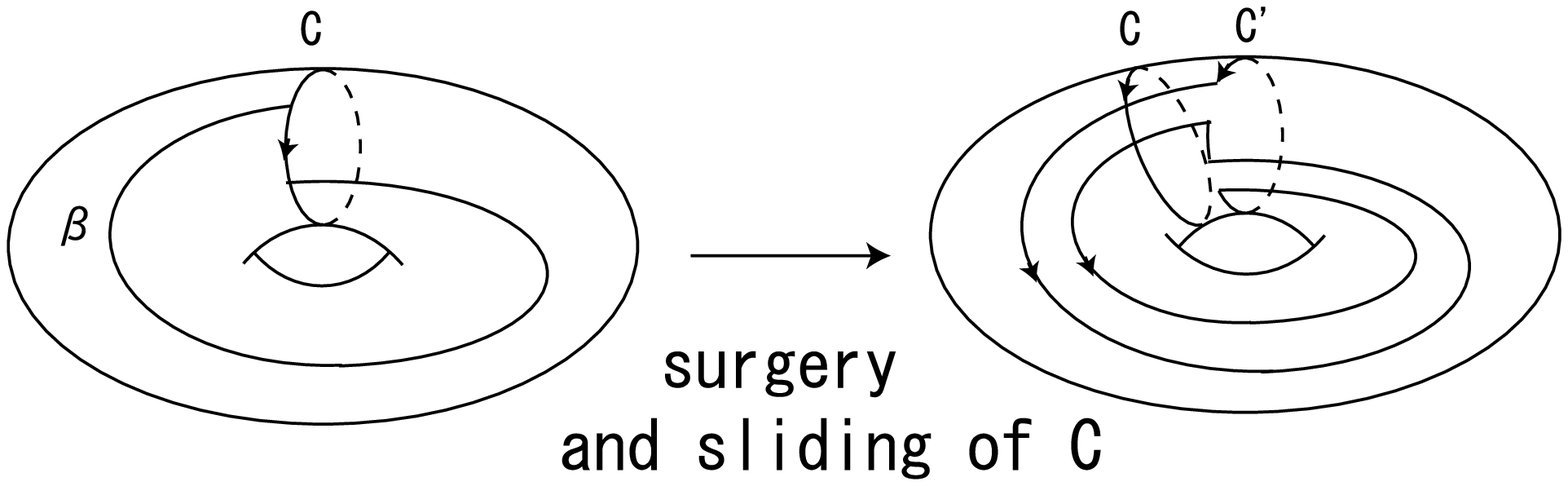}
\caption{}
\label{fig:CSurgeryIntersection2}
\end{figure}

\begin{proof}
 In Definition \ref{definition:SurgeryOnCircle} 
of surgery on $C$ along $\beta$, 
$C - N(\beta)$ consists of two arcs, say, $C_1$ and $C_2$.
 Since $\beta$ connects both sides of $C$ and since $H$ is orientable,
each of the two arcs of $\beta \times \partial I$ 
connects an endpoint of $C_1$ and that of $C_2$. 
 Hence the surgery yields a single circle, say, $C'$. 
 For an arbitrary orientation of $C'$, 
the induced orientation 
of the two subarcs of $\beta \times \partial I$ of $C'$
are parallel
as a pair of opposite sides of the quadrilateral $\beta \times I$.
 After the surgery, we slide the original circle $C$ a little
so that $C$ intersects $\beta \times I$ 
in an arc of the form $p \times I$, where $p$ is a point of $\beta$.
 See Figure \ref{fig:CSurgeryIntersection2}. 
 Then $C$ intersects $C'$ at the endpoints of $p \times I$,
and the signs of them coincide.
 Hence we have $C \cdot C' = \pm 2$, and $C'$ is essential in $H$. 
\end{proof}

\begin{definition}\label{definition:EssentialSurgeryOnCircleSlope}
 The two points $\partial \beta$ 
separate $C$ into two arcs, say, $C_1, C_2$. 
 If none of the circles $\beta \cup C_1$ and $\beta \cup C_2$ 
bounds a disk in $H$, 
then we say that the surgery on $C$ along the arc $\beta$ is {\it essential}. 
\end{definition}

\begin{corollary}\label{corollary:SurgeryOnCircleSlope1}
 If $H$ is a torus,
and if a surgery on $C$ along the arc $\beta$ is essential, 
then it yields a circle $C'$
such that $C \cdot C' = \pm 2$.
 Hence $C'$ is essential in $H$.
\end{corollary}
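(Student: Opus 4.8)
The plan is to reduce the Corollary to Lemma~\ref{lemma:SurgeryOnCircleSlope1}. Fix an orientation of the torus $H$ and of the circle $C$. The only hypothesis of Lemma~\ref{lemma:SurgeryOnCircleSlope1} that is not already granted is that $\beta$ connects both sides of $C$; once that is established, Lemma~\ref{lemma:SurgeryOnCircleSlope1} says verbatim that the surgery produces a single circle $C'$ with $C\cdot C'=\pm2$ and that $C'$ is essential in $H$, which is exactly the assertion of the Corollary. So it suffices to prove the following.

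\emph{Claim.} If the surgery on $C$ along $\beta$ is essential, then $\beta$ connects both sides of $C$.

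I would prove this by contraposition. Suppose $\beta$ does not connect both sides of $C$. Since the interior of $\beta$ is connected and disjoint from $C$, it then lies in a single component of $H\setminus C$, and both endpoints of $\beta$ abut $C$ from that component. At this point I would assume that $C$ is essential in $H$, as is the case in every application in the paper; this assumption is genuinely needed, since if $C$ bounds a disk in $H$ and the interior of $\beta$ is pushed into the complementary once-punctured torus, one can arrange an essential surgery that produces two essential circles, so the Corollary as literally stated fails for such $C$. Granting that $C$ is essential, $H\setminus C$ is connected and its closure $\overline Y$ is an annulus whose two boundary circles are the two sides of $C$; $\beta$ then lifts to a properly embedded arc in $\overline Y$ with both endpoints on one of these boundary circles. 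Such an arc in an annulus is boundary-parallel: it cuts off a disk $E\subset\overline Y$ whose boundary is the union of $\beta$ with one of the two arcs $C_1,C_2$ into which $\partial\beta$ separates $C$, and $E$ meets only one boundary circle of $\overline Y$, hence descends to an embedded disk in $H$. So one of $\beta\cup C_1$, $\beta\cup C_2$ bounds a disk in $H$; that is, the surgery is not essential, which is the contrapositive of the Claim.

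I expect the main obstacle to be the Claim, and within it two points that are easy to state but should be handled with care: the standard but genuinely geometric fact that a properly embedded arc in an annulus with both endpoints on a single boundary circle is boundary-parallel and so cuts off a disk, and the need to flag (or exclude by hypothesis) the degenerate case in which $C$ bounds a disk in $H$. Everything after the Claim — orienting $H$ and $C$, getting a single circle, the value $C\cdot C'=\pm2$, and the deduction that $C'$ is essential in the torus — is immediate from Lemma~\ref{lemma:SurgeryOnCircleSlope1}.
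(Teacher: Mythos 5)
Your proof is correct and takes essentially the same route as the paper's: the paper's entire proof is the one-sentence observation that an essential surgery forces $\beta$ to connect both sides of $C$, after which Lemma~\ref{lemma:SurgeryOnCircleSlope1} applies. Your cut-along-$C$ annulus argument supplies the justification the paper omits for that sentence, and your remark that $C$ must implicitly be essential (as it always is in the paper's applications, where $C=\partial F$) correctly identifies a hypothesis the statement glosses over.
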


\begin{proof}
 The arc $\beta$ connects both sides of $C$
since $H$ is a torus and the surgery is essential. 
 Hence we obtain the desired conclusion 
by Lemma \ref{lemma:SurgeryOnCircleSlope1}. 
\end{proof}

\begin{definition}\label{definition:dual}
 We assume that 
a single circle $C'$ is obtained 
by a surgery on $C$ along the arc $\beta$ in $H$.
 Then we can recover $C$ by performing a {\it dual surgery} on $C'$ 
as below. 
 Let $p$ be a point in the interior of the arc $\beta$. 
 $C$ can be restored 
by a surgery on $C'$ along the arc $(p \times I) \subset (\beta \times I)$,
where $\beta \times I$ is a square 
as in Definition \ref{definition:SurgeryOnCircle}. 
\end{definition}

\begin{theorem}\label{thorem:SurgeryOnCircleSlope2}
 Let $T$ be a torus, 
and $C$ and $C'$ oriented circles in $T$. 
 Then (1) and (2) below are equivalent.
\begin{enumerate}
\item[(1)] 
We can obtain $C'$ by performing a surgery on $C$ along an arc.
\item[(2)] 
$C' \cdot C = \pm 2$.
\end{enumerate}
\end{theorem}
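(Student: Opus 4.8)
The plan is to prove the two implications separately, using the classification of essential simple closed curves on the torus together with Lemma \ref{lemma:SurgeryOnCircleSlope1}.

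For $(1)\Rightarrow(2)$: if a surgery on $C$ along an arc $\beta$ yields a \emph{single} circle $C'$, then $\beta$ must connect both sides of $C$. Indeed, on the orientable surface $T$, if $\beta$ approached $C$ from the same side at both of its endpoints, then each of the two arcs of $\beta\times\partial I$ would join two endpoints of the \emph{same} component of $C-(\partial\beta)\times I$, and the surgery would produce two circles, not one; this is the same orientability argument used in the proof of Lemma \ref{lemma:SurgeryOnCircleSlope1}. Hence $\beta$ connects both sides, and Lemma \ref{lemma:SurgeryOnCircleSlope1} gives $C'\cdot C=\pm 2$ at once (up to isotopy this is all we need, since the algebraic intersection number is an isotopy invariant).

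For $(2)\Rightarrow(1)$, the substantive direction, first note that a null-homotopic circle is null-homologous and so has algebraic intersection number $0$ with every circle; hence $C'\cdot C=\pm2\neq0$ forces both $C$ and $C'$ to be essential. Choose an orientation-preserving homeomorphism $h$ of $T$ carrying $C$ to the standard longitude (the image of $\{y=0\}$ under $\mathbb{R}^2\to\mathbb{R}^2/\mathbb{Z}^2$); such an $h$ exists because $\mathrm{SL}_2(\mathbb{Z})$ acts transitively on primitive homology classes, and it can be isotoped so that $h(C)$ is literally this straight curve. Then $h(C')$ is an essential curve with $(1,0)\cdot[h(C')]=\pm2$, so $[h(C')]=(m,\pm2)$ with $m$ odd; reversing the orientation of $C'$ if necessary, we may assume $[h(C')]=(m,2)$.

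It then remains to exhibit, for each odd $m$, an arc $\beta$ with $\partial\beta\subset h(C)$ whose surgery on $h(C)$ yields a curve of slope $(m,2)$; pulling back by $h^{-1}$ then finishes the proof, up to isotopy. I would take $\beta$ to be a spanning arc of the annulus $T-h(C)$ winding $k=(m-1)/2$ times around, i.e.\ in the band picture $\mathbb{R}\times[0,1]$ an arc from $(0,0)$ to $(1/2+k,\,1)$; this $\beta$ connects both sides of $h(C)$, so by Lemma \ref{lemma:SurgeryOnCircleSlope1} the surgery yields a single circle. Tracing that circle through the two arcs of $h(C)-(\partial\beta)\times I$ and the two rails $\beta\times\partial I$, with the pairing of rails to subarcs pinned down by the orientability of $T$ exactly as in Lemma \ref{lemma:SurgeryOnCircleSlope1}, one computes its homology class to be $(1+2k,2)=(m,2)$; since an essential simple closed curve on the torus is determined up to isotopy by its primitive homology class, the surgered curve is isotopic to $h(C')$. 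The main obstacle is precisely this last computation — keeping track of which rail attaches to which subarc of the cut-open $h(C)$, and that this combinatorics is what contributes the ``$2k$'' (and the ``$+1$'') to the first coordinate; everything else is formal. One can also avoid the explicit choice of $\beta$: isotope $C'$ so that it meets $C$ in the minimal number $2$ of points, and let $\beta$ be one of the two arcs of $C'-C$, which is automatically a spanning arc of $T-C$ of the right winding; the same bookkeeping then shows the surgery recovers $C'$ up to isotopy.
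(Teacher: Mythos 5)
Your proof is correct, but for the substantive direction $(2)\Rightarrow(1)$ it takes a genuinely different route from the paper. For $(1)\Rightarrow(2)$ both arguments reduce to Lemma \ref{lemma:SurgeryOnCircleSlope1}: you argue directly that a one-component outcome forces $\beta$ to join the two sides of $C$, while the paper routes through the essential/inessential dichotomy and Corollary \ref{corollary:SurgeryOnCircleSlope1}; these amount to the same observation. For $(2)\Rightarrow(1)$ the paper never computes a homology class: it isotopes $C$ and $C'$ to meet in two points of the same sign, performs a surgery on $C'$ along one of the two arcs into which these points cut $C$, observes via the already-proved implication that the resulting circle $C''$ is essential and disjoint from $C$, hence parallel to $C$, and then recovers $C'$ from $C''$ --- and so from $C$ --- by the dual surgery of Definition \ref{definition:dual}. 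Your argument instead normalizes $C$ to the standard longitude by a homeomorphism, pins down $[h(C')]=(m,\pm 2)$ with $m$ odd, and exhibits the explicit spanning arc of winding $(m-1)/2$ whose surgery lands in the class $\pm(m,2)$; the rail-to-subarc bookkeeping you flag as the main obstacle does close up correctly (the two rails are traversed in parallel while one subarc of $C$ is traversed with and the other against its orientation, giving first coordinate $\pm(2k+1)$), and the classification of essential simple closed curves on $T$ by primitive homology classes finishes it. The trade-off: the paper's duality trick is shorter and sidesteps all sign-chasing, while your construction is explicit about which arc realizes a given target slope, which is exactly the information exploited later in Lemma \ref{lemma:UniqueBandSum}.
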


\begin{proof}
 First, we assume (1) to show (2). 
 $C'$ can be obatined by a surgery on $C$ along an arc $\beta$.
 If the surgery is inessential, 
then it yields two circles one of which bounds a disk in $T$.
 This contradicts that $C'$ is a single circle.
 Hence the surgery is essential,
and (2) holds by Corollary \ref{corollary:SurgeryOnCircleSlope1}. 

\begin{figure}
\includegraphics[width=8cm]{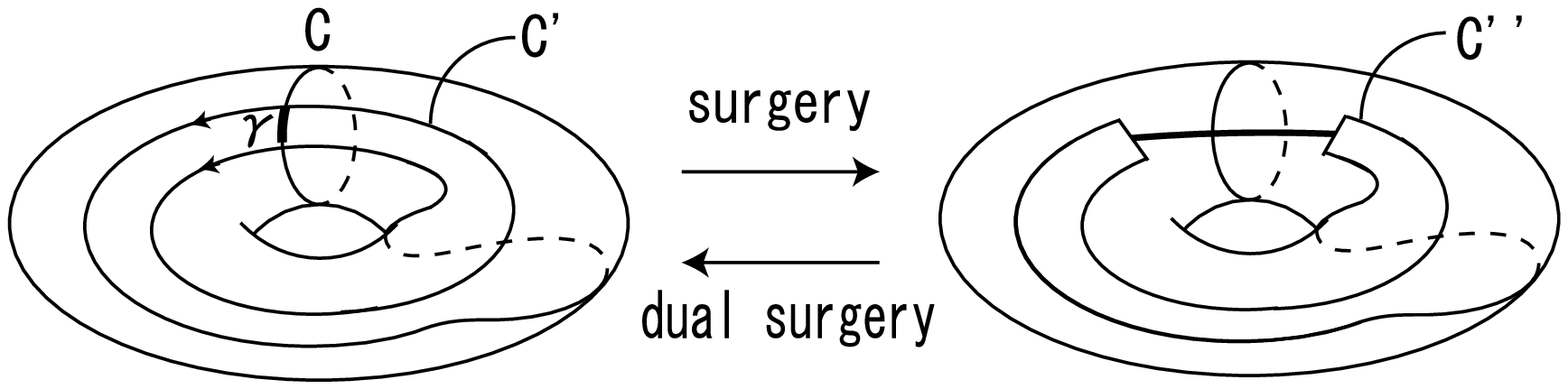}
\caption{}
\label{fig:DualSurgery}
\end{figure}

 Next, we prove (1) under the assumption that (2) holds. 
 It is well-known that 
we can place $C$ and $C'$
so that $C$ and $C'$ intersect at precisely two points of the same sign
since $C \cdot C' = \pm 2$ and $T$ is a torus. 
 The two intersection points separate $C$ into two arcs. 
 Let $\gamma$ be one of them.
 Since the signs of the two intersection points coincide,
a surgery on $C'$ along the arc $\gamma$
yields a single circle, say, $C''$.
 See Figure \ref{fig:DualSurgery}. 
 Because we have already proven (1) implies (2), 
$C' \cdot C'' = \pm 2$, and $C''$ is essential. 
 Then $C''$ is parallel to $C$ in the torus $T$ 
since $C''$ is essential and does not intersect $C$. 
 $C'$ is obtained by a dual surgery on $C''$, and hence on $C$. 
\end{proof}

 Theorem \ref{thorem:SurgeryOnCircleSlope2} 
together with Lemma \ref{lemma:EvenInherit} in the next section
shows Theorem \ref{theorem:BandSumD2}.

\section{${\Bbb D}_2$}\label{section:D2}

 In this section, we prove the graph ${\mathbb D}_2$ is a tree.

\begin{lemma}\label{lemma:EvenInherit}
 Let $p/q$ and $r/s$ be irreducible fractional numbers.
 If $p$ is even and $d(p/q, r/s) = 2$,
then $r$ is also even.
\end{lemma}

\begin{proof}
 Since $\pm 2 = \pm d (p/q, r/s) = ps-qr$,
and since $p$ is even, 
$qr = ps \mp 2$ is also even.
 Hence either $q$ or $r$ is even.
 Because $p/q$ is irreducible, $r$ is even.
\end{proof}

\begin{lemma}
 Let $p/q$ and $r/s$ be irreducible fractional numbers
with $d(p/q, r/s) = 2$.
 If $p/q > 0$ and $r/s < 0$,
then $p/q = 1$ and $r/s=-1$.
\end{lemma}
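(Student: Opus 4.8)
The plan is to normalize the chosen representatives so that both denominators are positive, and then to read off the conclusion from a single sign computation. First I would observe that since $p/q > 0$ and $r/s < 0$, neither number equals $\infty = 1/0$, so $q \neq 0$ and $s \neq 0$. Replacing $(p,q)$ by $(-p,-q)$ and $(r,s)$ by $(-r,-s)$ when necessary — an operation that changes neither the values $p/q$, $r/s$ nor the distance $d(p/q,r/s) = |ps - rq|$, and which preserves coprimality — I may assume $q > 0$ and $s > 0$. Then $p/q > 0$ forces $p \ge 1$ and $r/s < 0$ forces $r \le -1$, so that $p$, $q$, $s$, and $-r$ are all positive integers.

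Next I would compute the sign of the relevant determinant. Since $ps > 0$ and $rq < 0$, we have $ps - rq = ps + (-r)q > 0$, and therefore
\[ 2 = d(p/q, r/s) = ps - rq = ps + (-r)q . \]
The right-hand side is a sum of two products of positive integers, each of which is consequently at least $1$; since they sum to $2$, each equals $1$. From $ps = 1$ we get $p = s = 1$, and from $(-r)q = 1$ we get $q = 1$ and $r = -1$. Hence $p/q = 1$ and $r/s = -1$, as claimed.

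I do not anticipate a genuine obstacle here: the argument is forced once one knows a sum of two positive integers equals $2$. The only step requiring a moment's care is the normalization — checking that passing to representatives with positive denominators is legitimate, leaves the hypotheses $p/q > 0$ and $r/s < 0$ intact, and does not alter $d(p/q,r/s)$ — after which the conclusion is immediate and the hypothesis that the pairs are coprime is not even needed.
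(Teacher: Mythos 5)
Your proof is correct and follows essentially the same route as the paper: normalize the representatives so that both denominators are positive, then observe that $2 = ps - rq = ps + (-r)q$ is a sum of two positive integers, forcing each term to equal $1$. (In fact your writeup is slightly cleaner: the paper's displayed sum contains a typo, writing $s(-r)$ where $q(-r)$ is meant, which your version gets right.)
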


\begin{proof}
 We can assume $p,q,s >0$ and $r < 0$.
 Then $ps \ge 1$ and $s(-r) \ge 1$,
and $ps + s(-r) \ge 2$.
 On the other hand, $ps + s(-r) = \pm d(p/q, r/s) = \pm 2$.
 Hence we have $ps =1$ and $s(-r)=1$.
 Since $p,q,r,s$ are integers, the lemma follows.
\end{proof}

 Because neither $1/1$ nor $-1/1$ is a vertex of ${\mathbb D}_2$,
the next corollary holds.

\begin{corollary}\label{corollary:+side-side}
 The graph ${\mathbb D}_2$ has no edge
connecting the left hand side and the right hand side of the Poincar\'{e} disk.
\end{corollary}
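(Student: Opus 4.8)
The plan is to reduce the statement directly to the lemma immediately preceding it. First I would make precise what ``connecting the left hand side and the right hand side'' means in terms of fractions. Under the identification $z\mapsto \dfrac{z+i}{iz+1}$ used in Figures \ref{fig:D} and \ref{fig:D2}, the geodesic of ${\mathbb D}$ through the vertices $0=0/1$ and $\infty=1/0$ becomes the vertical diameter of the Poincar\'{e} disk, which cuts the disk into two half-disks; since $1$ maps to the right endpoint of the diameter and $-1$ to the left endpoint, the positive irreducible fractions land in the right half and the negative ones in the left half. Each half-disk is hyperbolically convex, so an edge of ${\mathbb D}_2$ (drawn as a geodesic, or as a diagonal of a quadrilateral face, both of whose endpoints lie in one half) stays in that half. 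Hence an edge of ${\mathbb D}_2$ ``connects the left and right hand sides'' precisely when one of its two endpoints is a positive irreducible fraction $p/q$ and the other a negative one $r/s$.

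Second, I would invoke the defining property of ${\mathbb D}_2$: two vertices are joined by an edge exactly when their distance is $2$. So if a crossing edge existed, there would be irreducible fractions with $p/q>0>r/s$ and $d(p/q,r/s)=2$. The preceding lemma then forces $p/q=1/1$ and $r/s=-1/1$. But the numerator $1$ is odd, so $1/1$ is an odd vertex of ${\mathbb D}$; since the vertices of ${\mathbb D}_2$ are by definition the \emph{even} vertices of ${\mathbb D}$ (and, dually, $-1/1$ is likewise not a vertex of ${\mathbb D}_2$), no such edge can occur. This is exactly the remark that neither $1/1$ nor $-1/1$ is a vertex of ${\mathbb D}_2$.

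There is essentially no obstacle here: the corollary is immediate once the lemma is available, and the parity of $1$ does all the work. The only point worth a word of care is the status of the two vertices $0$ and $\infty$, which sit on the separating diameter and lie in neither open half-disk; an edge incident to $0$ runs to some $\pm 2/s$ and does not cross the diameter, while $\infty=1/0$ is an odd vertex and so is not in ${\mathbb D}_2$ at all. Thus these boundary cases contribute no crossing edge either, and the proof is complete.
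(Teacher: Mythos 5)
Your argument is correct and is essentially the paper's own: the corollary follows immediately from the preceding lemma together with the observation that neither $1/1$ nor $-1/1$ is an (even) vertex of ${\mathbb D}_2$. Your extra care about the convexity of the half-disks and the boundary vertices $0$ and $\infty$ is a reasonable elaboration but does not change the route.
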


\begin{notation}
 Let ${\mathbb D}_{2+}$ denote the right half of the graph ${\mathbb D}_2$.
 That is, 
vertices of ${\mathbb D}_{2+}$ are 
those of ${\mathbb D}_2$ corresponding to non-negative rational numbers
and edges of it are those of ${\mathbb D}_2$ 
connecting pairs of vertices of ${\mathbb D}_{2+}$.
 Similarly, we define ${\mathbb D}_{2-}$
as the left half of ${\mathbb D}_2$.
 Then ${\mathbb D}_2 = {\mathbb D}_{2-} \cup {\mathbb D}_{2+}$,
and ${\mathbb D}_{2-} \cap {\mathbb D}_{2+} = \{ 0/1 \}$.
\end{notation}

\begin{lemma}\label{lemma:symmetry}
 ${\mathbb D}_2$ is symmetric about the line connecting $0/1$ and $1/0$.
 Precisely, 
$d(p/q, p'/q') = d(-p/q,-p'/q')$
for any pair of 
irreducible fractional numbers $p/q$ and $p'/q'$. 
\end{lemma}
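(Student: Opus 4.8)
The plan is to realize the asserted symmetry geometrically by the reflection of the hyperbolic plane across the geodesic joining $0/1$ and $1/0$, and then to check that this reflection respects the whole combinatorial structure of ${\mathbb D}_2$. Everything reduces to an elementary identity for the distance function $d$.

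First I would record the key computation. Represent an irreducible fractional number $p/q$ by the integer pair $(p,q)$; then $-p/q$ is represented by $(-p,q)$, and for any irreducible fractional numbers $p/q$ and $p'/q'$,
\[
d(-p/q,\, -p'/q')
= \left| {\rm det} \left( \begin{array}{cc} -p & -p' \\ q & q' \end{array} \right) \right|
= \left| -(pq' - qp') \right|
= |pq' - qp'|
= d(p/q,\, p'/q').
\]
Indeed, negating the first row of a $2 \times 2$ matrix only changes the sign of the determinant. (Choosing the other representative $(p,-q)$ for $-p/q$ negates the first \emph{column} instead, and likewise leaves $|{\rm det}|$ unchanged, so $d$ is well defined on fractional numbers; this is already implicit in the definition of $d$.) This establishes the displayed ``precisely'' assertion of the lemma.

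Next I would translate this into the geometric statement. In the upper half plane, the geodesic joining the vertices $0/1 = 0$ and $1/0 = \infty$ is the imaginary axis, and the reflection $\rho$ across it is the hyperbolic isometry $z \mapsto -\bar z$, which restricts to $x \mapsto -x$ on ${\mathbb R} \cup \{ \infty \}$; on vertices it is exactly $p/q \mapsto -p/q$, interchanging ${\mathbb D}_{2-}$ and ${\mathbb D}_{2+}$ and fixing $0/1$. Since the parity of $-p$ equals that of $p$, the map $\rho$ sends even vertices of ${\mathbb D}$ to even vertices, hence vertices of ${\mathbb D}_2$ to vertices of ${\mathbb D}_2$. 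By the displayed identity it sends a pair of vertices at distance $2$ to a pair of vertices at distance $2$, hence it sends each edge of ${\mathbb D}_2$ to an edge of ${\mathbb D}_2$; and being a hyperbolic isometry, it carries the geodesic arc representing an edge to the geodesic arc representing the image edge. Therefore ${\mathbb D}_2$ is invariant under $\rho$, i.e., it is symmetric about the line joining $0/1$ and $1/0$.

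There is essentially no serious obstacle here; the only point needing a moment's care is to confirm that $d$ is independent of the chosen integer representatives (so that ``$-p/q$'' is unambiguous in the formula), which I would dispatch with the column-negation remark above.
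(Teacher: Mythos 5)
Your proof is correct and its core step -- negating one row of the $2\times 2$ matrix only changes the sign of the determinant, so $|\det|$ and hence $d$ is unchanged -- is exactly the paper's one-line computation. The additional geometric discussion of the reflection $z\mapsto -\bar z$ is a harmless elaboration of what ``symmetric'' means, not a different method.
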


\begin{proof}
$d(-\displaystyle\frac{p}{q}, -\displaystyle\frac{p'}{q'}) 
= |{\rm det} \left( \begin{array}{cc} -p & -p' \\ q & q' \end{array} \right) |
= |-{\rm det} \left( \begin{array}{cc} p & p' \\ q & q' \end{array} \right) |
= |{\rm det} \left( \begin{array}{cc} p & p' \\ q & q' \end{array} \right) |
= d(\displaystyle\frac{p}{q}, \displaystyle\frac{p'}{q'}) $
\end{proof}

\begin{notation}
 For a finite sequence of real numbers 
${\bf a} = (a_1, a_2, \cdots, a_n)$, 
let $\phi_{\bf a} : 
{\mathbb R} \cup \{ \infty \} \rightarrow {\mathbb R} \cup \{ \infty \}$ 
denote a homeomorphism defined by 
$\phi_{\bf a}(x) = [a_n, a_{n-1}, \cdots, a_2, a_1, x]$.
 If $n=0$,
then $\phi_{\mathcal A}(x) = x$.
\end{notation}

\begin{remark}\label{remark:order}
 We set ${\mathbb R}_+ = \{ x \in {\mathbb R} \ |\ x \ge 0 \}$.
 For any $r \in {\mathbb R}_+$,
the map $x \mapsto x+r$ preserves the order 
in ${\mathbb R}_+ \cup \{ \infty \}$
(i.e., $x+r > y+r$ if $x, y \in {\mathbb R}_+ \cup \{ \infty \}$ and $x>y$),
while the map $x \mapsto 1/x$ reverses it.
 Hence,
for a finite sequence of non-negative real numbers 
${\bf a} = (a_1, a_2, \cdots, a_n)$, 
$\phi_{\bf a}$ preserves the order in ${\mathbb R}_+ \cup \{ \infty \}$ 
when $n$ is even,
and reverses it when $n$ is odd.
\end{remark}

\begin{lemma}\label{lemma:BunsuuKansuu}
 For any finite sequence of integers
${\bf a} = (a_1, a_2, \cdots, a_n)$, 
there are integers
$\alpha, \beta, \gamma$ and $\delta$
such that for any irreducible fractional number $p/q$, 
(1) 
$\phi_{\bf a}(p/q) = 
\dfrac{\alpha p + \beta q}{\gamma p + \delta q}$, 
(2) 
$\alpha p + \beta q$ and $\gamma p + \delta q$ are coprime
and (3) 
det$\left( \begin{array}{cc} 
\alpha & \beta \\ \gamma & \delta 
\end{array} \right) = (-1)^n$.
 Moreover, if $a_1, a_2, \cdots, a_n$ are non-negative,
we can take $\alpha, \beta, \gamma, \delta$ to be non-negative.
\end{lemma}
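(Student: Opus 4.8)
The plan is to prove the lemma by induction on $n$, realizing $\phi_{\bf a}$ as the action on the projective line $\mathbb{R} \cup \{\infty\}$ of an explicit $2\times 2$ integer matrix. For an integer $a$, write $M_a = \left( \begin{array}{cc} a & 1 \\ 1 & 0 \end{array} \right)$; with the arithmetic conventions $r/0 = \infty$, $r/\infty = 0$ and $\infty + r = \infty$, the M\"obius action $x \mapsto (ax+1)/x = a + 1/x$ of $M_a$ is a homeomorphism of $\mathbb{R}\cup\{\infty\}$ agreeing with $x \mapsto [a,x]$. Set $A({\bf a}) = M_{a_n}M_{a_{n-1}}\cdots M_{a_1}$ and write $A({\bf a}) = \left( \begin{array}{cc} \alpha & \beta \\ \gamma & \delta \end{array} \right)$ (for $n=0$, $A({\bf a})$ is the identity matrix and $\phi_{\bf a} = \mathrm{id}$). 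Since $[a_n, a_{n-1}, \ldots, a_1, x] = a_n + 1/[a_{n-1}, \ldots, a_1, x]$, one gets $\phi_{(a_1,\ldots,a_n)} = M_{a_n}\circ\phi_{(a_1,\ldots,a_{n-1})}$ as maps of $\mathbb{R}\cup\{\infty\}$, so by induction $\phi_{\bf a}$ is exactly the M\"obius action of $A({\bf a})$. In particular, for an irreducible fraction $p/q$ (allowing $p/q = 1/0 = \infty$), $\phi_{\bf a}(p/q) = (\alpha p + \beta q)/(\gamma p + \delta q)$, which is assertion (1).

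The remaining assertions are then elementary properties of $A({\bf a})$. For (3): $\det M_a = -1$ for every integer $a$, so $\det A({\bf a}) = (-1)^n$. For (2): since $\det A({\bf a}) = \pm 1$, the matrix $A({\bf a})^{-1}$ again has integer entries; if some prime $\ell$ divided both entries of the column $\left( \begin{array}{c} \alpha p + \beta q \\ \gamma p + \delta q \end{array} \right) = A({\bf a})\left( \begin{array}{c} p \\ q \end{array} \right)$, then applying $A({\bf a})^{-1}$ would force $\ell \mid p$ and $\ell \mid q$, contradicting $\mathrm{GCD}(p,q)=1$. For the last claim: if every $a_i \ge 0$, each $M_{a_i}$ has non-negative entries, and a product of matrices with non-negative entries has non-negative entries, so $\alpha,\beta,\gamma,\delta \ge 0$.

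The one point that is not purely mechanical is checking that the recursion $[a_n, \ldots, a_1, x] = a_n + 1/[a_{n-1},\ldots,a_1,x]$ matches matrix multiplication even at the exceptional values where an intermediate quantity equals $0$ or $\infty$; but this is precisely what the conventions $r/0 = \infty$, $r/\infty = 0$, $\infty + r = \infty$ are designed to guarantee, so it reduces to a short case check and is no real obstacle. Everything else is bookkeeping with $2\times 2$ integer matrices, and the explicit description $A({\bf a}) = M_{a_n}\cdots M_{a_1}$ will moreover be convenient later when computing the vertices $r_i$ in Theorem~\ref{theorem:main}.
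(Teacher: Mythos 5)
Your proof is correct and takes essentially the same route as the paper: both argue by induction on $n$ via the recursion $[a_n,\ldots,a_1,x]=a_n+1/[a_{n-1},\ldots,a_1,x]$, which is exactly left-multiplication by your matrix $M_{a_n}$ (the paper simply writes out the resulting entries $\alpha = a_k\alpha'+\gamma'$, $\beta = a_k\beta'+\delta'$, $\gamma=\alpha'$, $\delta=\beta'$ instead of naming the matrices). The only cosmetic difference is the coprimality step, where the paper uses the Euclidean relation $\alpha p+\beta q = a_k(\gamma p+\delta q)+(\gamma'p+\delta'q)$ rather than the integrality of $A({\bf a})^{-1}$.
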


\begin{proof}
 We show this lemma by induction on $n$.
 When $n=0$, 
$\phi_{\mathcal A} (p/q) = p/q 
= \dfrac{1 \cdot p + 0 \cdot q}{0 \cdot p + 1 \cdot q}$,
and the lemma holds.

 We assume that
there are integers $\alpha', \beta', \gamma', \delta'$ as above
for $(a_1, a_2, \cdots, a_{k-1})$.

 When $n = k$,
$\phi_{\mathcal A}(p/q)
= [a_k, a_{k-1}, \cdots, a_2, a_1, p/q]$
\newline
$= [a_k, \displaystyle\frac{\alpha' p + \beta' q}{\gamma' p + \delta' q}]
= a_k + 
\displaystyle\frac{\gamma' p + \delta' q}{\alpha' p + \beta' q}
= 
\dfrac{(a_k \alpha' + \gamma')p+(a_k \beta' + \delta')q}{\alpha' p + \beta' q}$.
 We set 
$\alpha = a_k \alpha' + \gamma'$, 
$\beta = a_k \beta' + \delta'$, 
$\gamma = \alpha'$
and $\delta = \beta'$.

 Then, 
$\alpha p + \beta q$ and $\gamma p + \delta q$ are coprime 
since
$\alpha p + \beta q = a_k (\gamma p + \delta q) + (\gamma' p + \delta' q)$
and since 
$\gamma p + \delta q = \alpha' p + \beta' q$ and $\gamma' p + \delta' q$
are coprime by the assumption of induction.

 In addition,
${\rm det}
\left(
\begin{array}{cc}
\alpha & \beta \\
\gamma & \delta \\
\end{array}
\right) 
=
{\rm det}
\left(
\begin{array}{cc}
a_k \alpha' + \gamma' & a_k \beta' + \delta' \\
\alpha' & \beta' \\
\end{array}
\right) 
= 
{\rm det}
\left(
\begin{array}{cc}
\gamma' & \delta' \\
\alpha' & \beta' \\
\end{array}
\right) 
=$
\newline
$- 
{\rm det}
\left(
\begin{array}{cc}
\alpha' & \beta' \\
\gamma' & \delta' \\
\end{array}
\right) 
= -(-1)^{k-1}$,
where we obtain the first equation 
by subtracting $a_k$ times the second row from the first row,
and the last equation by the assumption of induction.

 If $a_1, a_2, \cdots, a_k$ are non-negative,
then $\alpha = a_k \alpha' + \gamma'$, 
$\beta = a_k \beta' + \delta'$, 
$\gamma = \alpha'$
and $\delta = \beta'$
are non-negative
since $a_k$ is non-negative
and $\alpha', \beta', \gamma'$ and $\delta'$ are non-negative
by the assumption of induction.
\end{proof}

\begin{lemma}\label{lemma:distance}
 For any finite sequence of integers
${\bf a} = (a_1, a_2, \cdots, a_n)$, 
the map $\phi_{\bf a}$ preserves the distance.
\end{lemma}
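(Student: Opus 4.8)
The plan is to reduce the statement to the linear-algebraic description of $\phi_{\bf a}$ already obtained in Lemma \ref{lemma:BunsuuKansuu}. For a finite sequence of integers ${\bf a} = (a_1, \dots, a_n)$, that lemma produces an integer matrix
$A = \left( \begin{array}{cc} \alpha & \beta \\ \gamma & \delta \end{array} \right)$
with $\det A = (-1)^n = \pm 1$, such that $\phi_{\bf a}(p/q) = (\alpha p + \beta q)/(\gamma p + \delta q)$ and moreover the pair $(\alpha p + \beta q, \gamma p + \delta q)$ is already coprime, so it is a legitimate representative of the irreducible fractional number $\phi_{\bf a}(p/q)$. The point is then simply that multiplying a column vector by a matrix of determinant $\pm 1$ does not change the absolute value of a $2\times 2$ determinant formed from two such vectors.

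Concretely, I would argue as follows. Let $p/q$ and $r/s$ be irreducible fractional numbers. Write $\phi_{\bf a}(p/q)$ and $\phi_{\bf a}(r/s)$ using the representatives supplied by Lemma \ref{lemma:BunsuuKansuu}, namely $\phi_{\bf a}(p/q) = (\alpha p + \beta q)/(\gamma p + \delta q)$ and $\phi_{\bf a}(r/s) = (\alpha r + \beta s)/(\gamma r + \delta s)$, with the same $\alpha,\beta,\gamma,\delta$. Then by definition of the distance,
\[
d(\phi_{\bf a}(p/q),\phi_{\bf a}(r/s)) = \left| \det \left( \begin{array}{cc} \alpha p + \beta q & \alpha r + \beta s \\ \gamma p + \delta q & \gamma r + \delta s \end{array} \right) \right|.
\]
The matrix inside is exactly
$\left( \begin{array}{cc} \alpha & \beta \\ \gamma & \delta \end{array} \right) \left( \begin{array}{cc} p & r \\ q & s \end{array} \right)$,
so its determinant equals $\det A \cdot \det \left( \begin{array}{cc} p & r \\ q & s \end{array} \right) = (-1)^n (ps - rq)$. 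Taking absolute values gives $d(\phi_{\bf a}(p/q),\phi_{\bf a}(r/s)) = |ps - rq| = d(p/q, r/s)$, which is the claim.

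I should double-check two routine points so the proof is airtight. First, one must ensure the representatives coming out of Lemma \ref{lemma:BunsuuKansuu} are genuinely the coprime representatives that the definition of $d$ implicitly uses — but coprimality is part (2) of that lemma, so this is already handled, and any two coprime representatives of the same fractional number differ only by an overall sign, which does not affect $|{\det}|$. Second, one must handle the point $\infty = 1/0$ and the exceptional conventions $r/0 = \infty$, $r/\infty = 0$: the formula $\phi_{\bf a}(p/q) = (\alpha p + \beta q)/(\gamma p + \delta q)$ and the matrix computation remain valid with $(p,q) = (1,0)$, since Lemma \ref{lemma:BunsuuKansuu} was proved for arbitrary irreducible fractional numbers including $\infty$. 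There is no real obstacle here; the only thing to be careful about is citing Lemma \ref{lemma:BunsuuKansuu} correctly and noting the multiplicativity of the determinant — the lemma is essentially a corollary of the previous one.
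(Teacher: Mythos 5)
Your proposal is correct and follows essentially the same route as the paper: invoke Lemma \ref{lemma:BunsuuKansuu} to write $\phi_{\bf a}$ as a fractional linear map given by an integer matrix of determinant $(-1)^n$, then use multiplicativity of the determinant to conclude $d(\phi_{\bf a}(p/q),\phi_{\bf a}(r/s)) = |(-1)^n(ps-rq)| = d(p/q,r/s)$. Your extra remarks on coprime representatives and the point $\infty = 1/0$ are sensible but not needed beyond what the cited lemma already provides.
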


\begin{proof}
 Let $p/q$ and $r/s$ be irreducible fractional numbers.
 As in Lemma \ref{lemma:BunsuuKansuu},
there are integers $\alpha, \beta, \gamma$ and $\delta$
with $\phi_{\bf a} (t/u) = (\alpha t + \beta u)/(\gamma t + \delta u)$
for any irreducible fractional number $t/u$.
 Hence
$d(\phi_{\bf a}(p/q), \phi_{\bf a}(r/s))
=|{\rm det} \left( \begin{array}{cc} 
\alpha p + \beta q & \alpha r + \beta s \\
\gamma p + \delta q & \gamma r + \delta s \\
\end{array} \right)|
=|{\rm det} \left( \begin{array}{cc} 
\alpha & \beta \\
\gamma & \delta \\
\end{array} \right)
\left( \begin{array}{cc} 
p & r \\
q & s \\
\end{array} \right)|
=|(-1)^n {\rm det}
\left( \begin{array}{cc} 
p & r \\
q & s \\
\end{array} \right)|
=d(p/q,r/s)$
\end{proof}

\begin{definition}\label{definition:mother}
 For any positive irreducible fractional number $p/q$
with $p$ even and $p, q$ positive,
we define the {\it mother} $M(p/q)$ of $p/q$ as below.
 Let $p/q = [a_0, a_1, \cdots, a_{n-1}, a_n]$
be the standard continued fraction expansion.
 Then we set
$M(p/q) = [a_0, a_1, \cdots, a_{n-1}, a_n - 2]$. 
 Then $M(p/q)$ is a non-negative rational number
which is expressed by an irreducible fractional number
with its numerator even,
which is shown in Lemma \ref{lemma:mother}.
\end{definition}

\begin{remark}
 $d(p/q, M(p/q))=2$ 
by Lemma \ref{lemma:distance} and $d(a_n, a_n-2) = 2$.
 Hence the vertices $p/q$ and $M(p/q)$ are connected by an edge
in the graph ${\mathbb D}_2$.
\end{remark}

\begin{definition}
 For an irreducible fractional number $p/q$ with $p\ge 0$ and $q>0$,
we define {\it size} of $p/q$ as ${\rm size}(p/q) = p + q$.
 For $\infty = 1/0$,
${\rm size}(\infty) = 1 + 0 = 1$.
\end{definition}

\begin{lemma}\label{lemma:mother}
 For any positive irreducible fractional number $p/q$
with $p$ even and $p, q$ positive,
the mother $M(p/q)$ is a non-negative rational number
which is expressed by an irreducible fractional number
with its numerator even.
 Moreover, ${\rm size}(M(p/q)) < {\rm size}(p/q)$
\end{lemma}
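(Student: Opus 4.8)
The plan is to analyze the standard continued fraction expansion $p/q = [a_0, a_1, \dots, a_n]$ under the operation $a_n \mapsto a_n - 2$ and to split into cases according to whether $a_n \ge 4$, $a_n = 3$, or $a_n = 2$. First I would observe that $M(p/q) = \phi_{\mathbf{a}}(a_n - 2)$ where $\mathbf{a} = (a_0, a_1, \dots, a_{n-1})$ and $a_n - 2 \ge 0$, so by the non-negativity clause of Lemma \ref{lemma:BunsuuKansuu} together with Remark \ref{remark:order}, $M(p/q)$ is a non-negative rational number (here one uses $a_i \ge 1$ for $i \ge 1$ and $a_0 \ge 0$). To exhibit $M(p/q)$ as an irreducible fraction with even numerator: write $M(p/q) = (\alpha(a_n-2) + \gamma)/(\beta(a_n - 2) + \delta)$ with the integer matrix $\bigl(\begin{smallmatrix}\alpha & \gamma\\ \beta & \delta\end{smallmatrix}\bigr)$ of determinant $\pm 1$ coming from Lemma \ref{lemma:BunsuuKansuu}; this numerator and denominator are automatically coprime. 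Since $d(a_n, a_n - 2) = 2$ and $\phi_{\mathbf a}$ preserves distance (Lemma \ref{lemma:distance}), we get $d(p/q, M(p/q)) = 2$, and then Lemma \ref{lemma:EvenInherit} forces the numerator of $M(p/q)$ to be even because $p$ is even. This already settles the first assertion without any case analysis, which is the cleaner route.

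For the size inequality I would argue by cases. When $a_n \ge 4$, the expansion $[a_0, \dots, a_{n-1}, a_n - 2]$ with $a_n - 2 \ge 2$ is already standard, so by the uniqueness of standard expansions $M(p/q)$ has this as its standard form, and I would show $\mathrm{size}$ strictly decreases by comparing $\phi_{\mathbf a}(a_n)$ with $\phi_{\mathbf a}(a_n - 2)$: since $a_n > a_n - 2 \ge 0$ and each partial map $x \mapsto x + r$ (for $r \ge 0$) strictly increases size-ordering on $\mathbb{R}_+ \cup \{\infty\}$ while $x \mapsto 1/x$ inverts the fraction, a direct computation with the matrix $\bigl(\begin{smallmatrix}\alpha & \gamma\\ \beta & \delta\end{smallmatrix}\bigr)$ (all entries non-negative, determinant $\pm1$, not both $\alpha,\beta$ zero) gives $\mathrm{size}(M(p/q)) = \alpha(a_n - 2) + \gamma + \beta(a_n - 2) + \delta < \alpha a_n + \gamma + \beta a_n + \delta = p + q = \mathrm{size}(p/q)$, the strict inequality holding because $\alpha + \beta > 0$. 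When $a_n = 3$, we have $a_n - 2 = 1$, so $[a_0, \dots, a_{n-1}, 1] = [a_0, \dots, a_{n-1} + 1]$, which shortens the expansion; and when $a_n = 2$, we have $a_n - 2 = 0$, and $[a_0, \dots, a_{n-1}, 0] = [a_0, \dots, a_{n-2}]$ (using $a + 1/0^{-1}$-type collapsing via the conventions $r/0 = \infty$, $\infty + r = \infty$, $1/\infty = 0$), which shortens it by two terms. In both short cases I would note that the new fraction is $\phi_{\mathbf a}(a_n - 2)$ with $0 \le a_n - 2 < a_n$, so the same matrix estimate as above applies verbatim and yields the strict size decrease; I do not actually need to identify the standard form in these cases, only the value of $M(p/q)$ as a reduced fraction.

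The main obstacle I anticipate is bookkeeping the degenerate conventions when $a_n \in \{2,3\}$: one must check that the formula $M(p/q) = \phi_{\mathbf a}(a_n - 2)$ from Lemma \ref{lemma:BunsuuKansuu} still computes the honest reduced fraction even when $a_n - 2$ equals $0$ or $1$ and the intermediate continued-fraction value passes through $\infty$ — i.e., that the linear-fractional description $t \mapsto (\alpha t + \gamma)/(\beta t + \delta)$ remains valid at $t = 0$ (it does, giving $\gamma/\delta$, which is reduced since $\gcd$ of columns is forced by $\det = \pm 1$). A secondary technical point is ruling out $\alpha = \beta = 0$; but that matrix has determinant $0$, contradicting Lemma \ref{lemma:BunsuuKansuu}(3), so $\alpha + \beta \ge 1$ always, which is exactly what makes the size inequality strict. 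Once these conventions are pinned down, the proof is a short computation with the $2\times 2$ matrix and the observation that replacing $a_n$ by the strictly smaller non-negative $a_n - 2$ strictly decreases $\alpha a_n + \beta a_n + \gamma + \delta$.
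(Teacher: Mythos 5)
Your proposal is correct and follows essentially the same route as the paper: both express $p/q=\phi_{\bf a}(a_n)$ and $M(p/q)=\phi_{\bf a}(a_n-2)$ via the non-negative unimodular matrix of Lemma \ref{lemma:BunsuuKansuu} and read off irreducibility, non-negativity, and the strict size drop $2(\alpha+\gamma)>0$ from that single computation (your case split on $a_n\in\{2,3\}$ versus $a_n\ge 4$ collapses to the same estimate, as you note). The only deviation is cosmetic: for evenness of the numerator you invoke $d(p/q,M(p/q))=2$ together with Lemma \ref{lemma:EvenInherit}, while the paper just observes directly that the numerator equals $p-2\alpha$; both are one-line arguments.
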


\begin{proof}
 As in Definition \ref{definition:mother},
there is a unique continued fraction expansion
\newline
$p/q = [a_0, a_1, \cdots, a_{n-1}, a_n]$.
 For the sequence ${\bf a}=(a_{n-1}, a_{n-2}, \cdots, a_1, a_0)$,
there are non-negative integers $\alpha, \beta, \gamma, \delta$
such that 
${\rm det} \left( \begin{array}{cc}
\alpha & \beta \\
\gamma & \delta \\
\end{array} \right) 
= (-1)^n$
and 
$\phi_{\bf a}(r/s) = 
\dfrac{\alpha r + \beta s}{\gamma r + \delta s}$
for any irreducible fractional number $r/s$
as in Lemma \ref{lemma:BunsuuKansuu}. 

 Then 
$\displaystyle\frac{p}{q} = \phi_{\bf a} (a_n) = 
\displaystyle\frac{\alpha \cdot a_n + \beta \cdot 1}
{\gamma \cdot a_n + \delta \cdot 1}$,
and 
$M(\displaystyle\frac{p}{q}) = \phi_{\bf a} (a_n -2) = 
\dfrac{\alpha (a_n -2) + \beta \cdot 1}
{\gamma (a_n -2) + \delta \cdot 1}$.
 These expression of fractional numbers are irreducible
by Lemma \ref{lemma:BunsuuKansuu}.
 Since $p = \alpha a_n + \beta \cdot 1$ is even,
(the numerator of $M(p/q)$) $= \alpha (a_n -2) + \beta \cdot 1$ is also even.

 Moreover,
size $(p/q) = a_n (\alpha + \gamma) + \beta + \delta$, and
size $(M(p/q)) = (a_n-2)(\alpha + \gamma) + \beta + \delta$. 
 Hence
size $(p/q) - {\rm size}(M(p/q))= 2(\alpha + \gamma) > 0$. 
 Note that either $\alpha > 0$ or $\gamma > 0$ holds
because they are non-negative and 
${\rm det}
\left(
\begin{array}{cc}
\alpha & \beta \\
\gamma & \delta \\
\end{array}
\right) 
\ne 0$.
\end{proof}

\begin{lemma}\label{lemma:connected}
 The graph ${\mathbb D}_2$ is connected.
 In fact, for any irreducible fractional number $p/q$
with $p$ even, $p \ge 0$ and $q > 0$,
the sequence
\newline
$p/q, M(p/q), M(M(p/q)), M(M(M(p/q))), \cdots, M^k(p/q), \cdots $
\newline
reaches $0/1$.
 That is, $M^m (p/q) = 0/1$ for some non-negative integer $m$.
\end{lemma}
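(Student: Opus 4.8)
The plan is to prove the displayed ``in fact'' assertion first and then deduce the connectedness of ${\mathbb D}_2$ from it together with the symmetry recorded in Lemma \ref{lemma:symmetry}. The whole argument is driven by the single monotone quantity ${\rm size}$, which takes values in the positive integers on every non-negative irreducible fractional number, together with Lemma \ref{lemma:mother}, which tells us that $M$ sends a positive irreducible fraction with even numerator to a non-negative irreducible fraction with even numerator of strictly smaller size.

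First I would verify that the sequence $p/q,\ M(p/q),\ M^2(p/q),\ \dots$ is well defined up to the step at which it first equals $0/1$, and that it does reach $0/1$. Start from $p/q$ irreducible with $p$ even, $p \ge 0$, $q > 0$. If $p = 0$ then $p/q = 0/1$ and the assertion holds with $m = 0$. If $p > 0$ then $M(p/q)$ is defined, and by Lemma \ref{lemma:mother} it is a non-negative irreducible fraction with even numerator and ${\rm size}\bigl(M(p/q)\bigr) < {\rm size}(p/q)$; if its numerator is $0$ this value is $0/1$ and we stop, and otherwise it is a positive rational, hence has positive reduced numerator and denominator, so $M$ applies to it again. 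Iterating, we obtain a sequence, defined as long as its terms differ from $0/1$, whose terms are non-negative irreducible fractions with even numerators and whose sizes strictly decrease. A strictly decreasing sequence of positive integers is finite, so the construction halts, and by the above it can halt only by producing $0/1$; thus $M^m(p/q) = 0/1$ for some non-negative integer $m$, which is the displayed statement.

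Next I would read off connectedness. As noted in the remark following Definition \ref{definition:mother}, $d\bigl(x, M(x)\bigr) = 2$, so consecutive terms of the sequence above are joined by an edge of ${\mathbb D}_2$; hence that sequence is an edge-path in ${\mathbb D}_2$ from $p/q$ to $0/1$, and every vertex of ${\mathbb D}_{2+}$ lies in the connected component of $0/1$. Given a vertex $p/q$ of ${\mathbb D}_{2-}$, written with $q > 0$ and $p \le 0$, the fraction $-p/q$ is a vertex of ${\mathbb D}_{2+}$; since $-1$ preserves the parity of numerators and, by Lemma \ref{lemma:symmetry}, preserves distance, applying $-1$ to the edge-path from $-p/q$ to $0/1$ gives an edge-path in ${\mathbb D}_2$ from $p/q$ to $0/1$. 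As ${\mathbb D}_2 = {\mathbb D}_{2-} \cup {\mathbb D}_{2+}$, every vertex is joined to $0/1$, so ${\mathbb D}_2$ is connected.

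I do not expect a real obstacle: the substantive content is already packaged into Lemma \ref{lemma:mother}. The one point that needs a moment of care is the bookkeeping in the second paragraph --- checking that every non-negative irreducible fraction with even numerator other than $0/1$ is an admissible argument of $M$ (that is, both its reduced numerator and denominator are positive) --- which is exactly what guarantees the orbit is defined all the way down to $0/1$ rather than getting stuck earlier.
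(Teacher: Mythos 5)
Your proposal is correct and follows essentially the same route as the paper: iterate the mother map, use Lemma \ref{lemma:mother} to get a strictly decreasing sequence of sizes that forces termination at $0/1$, note that consecutive terms are joined by edges of ${\mathbb D}_2$ since $d(x,M(x))=2$, and transfer to ${\mathbb D}_{2-}$ via Lemma \ref{lemma:symmetry}. Your added bookkeeping that each iterate remains an admissible argument of $M$ is a welcome explicit check that the paper leaves implicit.
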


\begin{proof}
 By Lemma \ref{lemma:symmetry}
and ${\mathbb D}_{2-} \cap {\mathbb D}_{2+} = \{ 0/1 \}$,
it is enough to show that ${\mathbb D}_{2+}$ is connected.

 If $M^k(p/q) > 0$, 
then we take $M^{k+1}(p/q)$,
which is of smaller size than $M^k(p/q)$ by Lemma \ref{lemma:mother}.
 This repetition terminates at most ${\rm size}(p/q) = p+q$ times.
 Hence ${\mathbb D}_{2+}$ is connected.
\end{proof}

\begin{definition}
 Let $p$, $q$ be coprime integers
with $p$ even, $p \ge 0$ and $q>0$.
 Then an irreducible fractional number $r/s$
is a {\it child} of $p/q$
if $d(r/s, p/q) = 2$
and $r/s$ is not the mother of $p/q$.
\end{definition}

\begin{definition}\label{definition:generation}
 Let $p$ and $q$ be coprime integers with $p$ even, $p \ge 0$ and $q > 0$.
 We say that the irreducible fractional number $p/q$
is of the $k$th {\it generation}
if $M^k(p/q) = 0/1$.
 We set ${\mathbb D}_{2k+}$ 
to be a subgraph of ${\mathbb D}_{2+}$
such that
its vertices are the vertices of ${\mathbb D}_{2+}$
of $k$ or smaller generation
and its edges are those of ${\mathbb D}_{2+}$
with endpoints at vertices of $k$ or smaller generations.
\end{definition}

\begin{definition}
 Let $p$, $q$ be coprime integers
with $p$ even, $p \ge 0$ and $q>0$.
 For the irreducible fractional number $p/q$,
we define the territory $T(p/q)$ of $p/q$ 
as a certain open interval in ${\mathbb R}$ below.
 Let $p/q = [a_0, a_1, \cdots, a_{n-1}, a_n]$
be the standard continued fraction expansion.

 When $n$ is even, 
$T(p/q) = 
([a_0, a_1, \cdots, a_{n-1}, a_n-1], \ 
[a_0, a_1, \cdots, a_{n-1}, \infty])$. 

 When $n$ is odd, 
$T(p/q) = 
([a_0, a_1, \cdots, a_{n-1}, \infty], \ 
[a_0, a_1, \cdots, a_{n-1}, a_n-1])$. 
\end{definition}

\begin{lemma}\label{lemma:child}
 Let $p$, $q$ be coprime positive integers with $p$ even, 
and $p/q = [a_0, a_1, \cdots, a_{n-1}, a_n]$
the standard continued fraction expansion.
 A rational number $r/s$ is a child of $p/q$
if and only if $r/s=[a_0, a_1, \cdots, a_{n-1}, a_n + 2/t]$
for some odd integer $t$ other than $-1$.
 $p/q$ is the mother of its child,
and hence if $p/q$ is of the $g$th generation,
then its child is of the $(g+1)$st generation.
 The territory $T(p/q)$ contains $p/q$ and all the children of $p/q$,
and $M(p/q) \notin T(p/q)$.
\end{lemma}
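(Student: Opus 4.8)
The plan is to transport the whole question through the homeomorphism $\phi_{\bf a}$ of ${\mathbb R}\cup\{\infty\}$ attached to the reversed sequence ${\bf a}=(a_{n-1},a_{n-2},\cdots,a_1,a_0)$, for which $\phi_{\bf a}(x)=[a_0,a_1,\cdots,a_{n-1},x]$; thus $p/q=\phi_{\bf a}(a_n)$ and, by Definition \ref{definition:mother}, $M(p/q)=\phi_{\bf a}(a_n-2)$. By Lemma \ref{lemma:distance}, $\phi_{\bf a}$ preserves the distance $d$, so for an irreducible fractional number $r/s$ we have $d(r/s,p/q)=2$ exactly when $d(x/y,a_n)=2$, where $x/y=\phi_{\bf a}^{-1}(r/s)$ is written in lowest terms. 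The latter means $x=a_n y\pm 2$, and since ${\rm GCD}(x,y)=1$ this forces $y$ to be a nonzero odd integer; putting $t=\pm y$ gives $x/y=a_n+2/t$ with $t$ odd, and conversely $d(a_n+2/t,a_n)=|2|=2$ for every odd $t$. Since $a_n-2=a_n+2/(-1)$ yields $M(p/q)$ and $\phi_{\bf a}$ is injective, the vertices at distance $2$ from $p/q$ other than $M(p/q)$ are precisely $\phi_{\bf a}(a_n+2/t)=[a_0,\cdots,a_{n-1},a_n+2/t]$ with $t$ odd and $t\ne-1$, which is the stated description of the children.

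To see that $p/q$ is the mother of such a child $r/s=[a_0,\cdots,a_{n-1},a_n+2/t]$, I would write down its standard continued fraction expansion and apply $M$. Using $a_n\ge 2$ and the relevant cases for $t$, a short computation gives $r/s=[a_0,\cdots,a_{n-1},a_n+2]$ when $t=1$; $[a_0,\cdots,a_{n-1},a_n,(t-1)/2,2]$ when $t\ge 3$; $[a_0,\cdots,a_{n-1},a_n-1,3]$ when $t=-3$; and $[a_0,\cdots,a_{n-1},a_n-1,1,(-t-3)/2,2]$ when $t\le-5$. Each of these is a standard expansion (the entries after the first are natural numbers and the last is $\ge 2$), so decreasing the last entry by $2$ and simplifying with the identities $[\cdots,c,0]=[\cdots]$ (delete the last two terms) and $[\cdots,c,1]=[\cdots,c+1]$ collapses the result to $[a_0,\cdots,a_n]=p/q$ in every case. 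Hence $M(r/s)=p/q$, and the generation statement is immediate from Definition \ref{definition:generation}.

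For the territory I would note that the two defining formulas for $T(p/q)$ combine into the single statement $T(p/q)=\phi_{\bf a}(I)$ with $I=\{x\in{\mathbb R}\mid x>a_n-1\}$. Indeed the reversed sequence ${\bf a}$ has non-negative entries, so by Remark \ref{remark:order} $\phi_{\bf a}$ preserves the order on ${\mathbb R}_+\cup\{\infty\}$ when $n$ is even and reverses it when $n$ is odd; in either case the image of the open ray $I$ is the open interval bounded by $\phi_{\bf a}(a_n-1)=[a_0,\cdots,a_{n-1},a_n-1]$ and $\phi_{\bf a}(\infty)=[a_0,\cdots,a_{n-1},\infty]$, which is exactly $T(p/q)$. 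Now $a_n>a_n-1$ gives $p/q=\phi_{\bf a}(a_n)\in T(p/q)$; for a child, $2/t>-1$ holds for every odd $t\ne-1$, hence $a_n+2/t>a_n-1$ and the child lies in $T(p/q)$; and $a_n-2<a_n-1$ shows $a_n-2\notin I$, so $M(p/q)=\phi_{\bf a}(a_n-2)\notin T(p/q)$ by injectivity of $\phi_{\bf a}$.

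The only laborious point is the case analysis in the second paragraph: one has to handle the small values $|t|=1,3$ and the boundary case $a_n=2$ (and the degenerate case $n=0$, where the prefix $a_0,\cdots,a_{n-1}$ is empty) and confirm that the continued fractions exhibited really are the standard ones. Everything else is a formal consequence of the properties of $\phi_{\bf a}$ already established in Lemmas \ref{lemma:BunsuuKansuu} and \ref{lemma:distance} and Remark \ref{remark:order}.
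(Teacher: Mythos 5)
Your proposal is correct and follows essentially the same route as the paper: classify the distance-$2$ neighbours of $a_n$ as $a_n+2/t$ with $t$ odd, transport through the distance-preserving map $\phi_{\bf a}$, verify $M(r/s)=p/q$ by writing out the standard expansion of each child, and read off the territory claims from the ordering $\infty>a_n+2/1>\cdots>a_n>\cdots>a_n-1>a_n-2$ via Remark \ref{remark:order}. The only difference is cosmetic: you spell out all four cases of the "easy calculation" ($t=1$, $t\ge 3$, $t=-3$, $t\le -5$) where the paper records only the case $t\le -5$ as a representative example.
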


\begin{proof}
 Let $u/w$ be an irreducible fractional number 
with $d(a_n, u/w)=2, u\ge 0$ and $w>0$. 
 Then $u = a_n w \pm 2$. 
 If $w$ is even, then $u$ is also even, 
which contradicts that $u/w$ is irreducible. 
 Hence $w$ is odd. 
 Dividing both sides by $w$, 
we have $u/w = a_n \pm 2/w$. 

 Hence, by Lemma \ref{lemma:distance},
an irreducible fractional number $r/s$ with $d(r/s, p/q)=2$
is of the form
$r/s=[a_0, a_1, \cdot, a_{n-1}, a_n + 2/t]$
for some odd integer $t$.
 This is the mother of $p/q$ when $t=-1$.

 When $t \ne -1$, 
an easy calculation shows that $p/q$ is the mother of $r/s$.
 For example, when $t \le -5$,
we have a continued fraction expansion
$r/s=[a_0, a_1, \cdots, a_{n-1}, a_n - 1, 1, (-t-3)/2, 2]$.
 Hence the mother of $r/s$ is
$M(r/s)=[a_0, a_1, \cdots, a_{n-1}, a_n - 1, 1, (-t-3)/2, 2-2]
=[a_0, a_1, \cdots, a_n] = p/q$.

 Remark \ref{remark:order} and the sequence of inequalities below imply that 
$p/q \in T(p/q)$,
any child of $p/q$ is in $T(p/q)$
and $M(p/q) \notin T(p/q)$.
\newline
$\infty > a_n + 2/1 > a_n + 2/3 > a_n + 2/5 > \cdots > a_n >$
\newline
\hspace*{2cm}$\cdots > a_n - 2/5 > a_n - 2/3 > a_n -1 > a_n -2 \ge 0$
\end{proof}

\begin{lemma}\label{lemma:territory}
 Let $p$, $q$ be coprime positive integers with $p$ even.
 For any child $r/s$ of $p/q$,
$T(r/s) \subset T(p/q)$ and $p/q \notin T(r,s)$.
 Let $r'/s'$ be another child of $p/q$.
 Then $T(r'/s') \cap T(r/s) = \emptyset$.
\end{lemma}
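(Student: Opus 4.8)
Lemma \ref{lemma:territory} concerns a child $r/s$ of $p/q$ (both numerators even, coprime positive integers), and asserts that the territory of the child sits strictly inside the territory of the parent, that the parent is outside the child's territory, and that territories of distinct children are disjoint.

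The plan is to move the whole configuration into the ``inner coordinate'' furnished by the continued fraction expansion of $p/q$, where the three assertions become statements about disjointness of honest open intervals. Write $p/q=[a_0,a_1,\ldots,a_{n-1},a_n]$ for the standard expansion and put ${\bf a}:=(a_{n-1},a_{n-2},\ldots,a_1,a_0)$, so that $\phi_{\bf a}$ is a homeomorphism of ${\mathbb R}\cup\{\infty\}$ with $\phi_{\bf a}(y)=[a_0,a_1,\ldots,a_{n-1},y]$ and $\phi_{\bf a}(a_n)=p/q$. Since the entries of ${\bf a}$ are non-negative, Remark \ref{remark:order} tells us $\phi_{\bf a}$ is monotone on ${\mathbb R}_+\cup\{\infty\}$, and comparing with the definition of the territory yields, in both parities of $n$,
\[
T(p/q)=\phi_{\bf a}\big((a_n-1,\infty)\big),
\]
where $(a_n-1,\infty)$ denotes the open real interval $\{x\in{\mathbb R}:x>a_n-1\}$. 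Next, fix a child $r/s$ of $p/q$. By Lemma \ref{lemma:child} we may write $r/s=[a_0,\ldots,a_{n-1},z]=\phi_{\bf a}(z)$ with $z=a_n+2/t$ for an odd integer $t\ne-1$. As in the proof of Lemma \ref{lemma:child} one computes the standard expansion $z=[c_0,\ldots,c_m]$: it equals $[a_n+2]$ when $t=1$, $[a_n,\frac{t-1}{2},2]$ when $t\ge3$, $[a_n-1,3]$ when $t=-3$, and $[a_n-1,1,\frac{-t-3}{2},2]$ when $t\le-5$. In each case $z>1$, so $c_0\ge1$ and prepending $a_0,\ldots,a_{n-1}$ again produces a standard expansion; by uniqueness it is the standard expansion of $r/s$. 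Unwinding the definition of $T(r/s)$ as a set of continued fractions and factoring out $\phi_{\bf a}$ gives $T(r/s)=\phi_{\bf a}(J_t)$, where $J_t:=\{[c_0,\ldots,c_{m-1},x]:x>c_m-1\}$.

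The key step is the identity $J_t=\mu\big((t-1,t+1)\big)$ for every odd $t\ne-1$, where $\mu(u):=a_n+\frac{2}{u}$ is the M\"obius map with $\mu(\infty)=a_n$, $\mu(0)=\infty$. I would prove this case by case from the four expansions above; for instance when $t\ge3$ one has $[a_n,\frac{t-1}{2},x]=a_n+\frac{2}{(t-1)+2/x}$, and $x\mapsto(t-1)+2/x$ maps $(1,\infty)$ bijectively onto $(t-1,t+1)$, so the image is $\mu((t-1,t+1))$; the cases $t=1$ (where $J_1=(a_n+1,\infty)=\mu((0,2))$), $t=-3$, and $t\le-5$ are similar pieces of continued fraction algebra. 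For later use I record that $\phi_{\bf a}$ and $\mu$ are injective and that $0\notin(t-1,t+1)$ for every odd $t$.

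The three claims now follow. \emph{Containment:} solving $a_n+2/u>a_n-1$ gives $\mu^{-1}\big((a_n-1,\infty)\big)=({\mathbb R}\cup\{\infty\})\setminus[-2,0]$, while $(t-1,t+1)\cap[-2,0]=\emptyset$ for every odd $t\ne-1$ --- indeed $(t-1,t+1)$ lies in $(2,\infty)$, equals $(0,2)$, equals $(-4,-2)$, or lies in $(-\infty,-4)$ according as $t\ge3$, $t=1$, $t=-3$, $t\le-5$. Hence $J_t=\mu((t-1,t+1))\subseteq(a_n-1,\infty)$ and so $T(r/s)=\phi_{\bf a}(J_t)\subseteq\phi_{\bf a}\big((a_n-1,\infty)\big)=T(p/q)$. \emph{Exclusion of the mother:} $p/q=\phi_{\bf a}(a_n)=\phi_{\bf a}(\mu(\infty))$ and $\infty\notin(t-1,t+1)$, so injectivity gives $p/q\notin\phi_{\bf a}(\mu((t-1,t+1)))=T(r/s)$ (equivalently this is the instance $M(r/s)\notin T(r/s)$ of the last clause of Lemma \ref{lemma:child}, since $p/q=M(r/s)$). \emph{Disjointness of siblings:} if $r'/s'$ is another child, with parameter $t'\ne t$, then $t,t'$ are distinct odd integers, so $|t-t'|\ge2$ and $(t-1,t+1)\cap(t'-1,t'+1)=\emptyset$; as neither interval contains $0$, $\mu$ is injective on their union, whence $J_t\cap J_{t'}=\emptyset$ and therefore $T(r/s)\cap T(r'/s')=\phi_{\bf a}(J_t)\cap\phi_{\bf a}(J_{t'})=\emptyset$.

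The main obstacle is the case-by-case verification of $J_t=\mu((t-1,t+1))$: the arithmetic is routine but the standard expansion of $a_n+2/t$ has four different shapes, and one must watch the degenerate endpoints --- $t=1$ makes $J_t$ unbounded, and $t=-3$ makes the closure of $J_t$ meet the boundary point $a_n-1$, so the inclusion $J_t\subseteq(a_n-1,\infty)$ is only barely true there. Some minor care is also needed to check that the concatenation of the standard expansion of the prefix of $p/q$ with that of $a_n+2/t$ is again standard, and that the degenerate case $n=0$ (empty ${\bf a}$, $\phi_{\bf a}={\rm id}$) is handled.
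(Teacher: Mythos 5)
Your proposal is correct and follows essentially the same route as the paper: both compute the standard continued fraction expansion of the child case-by-case in the parameter $t$, identify $T(r/s)$ as the interval with endpoints $[a_0,\ldots,a_{n-1},a_n+2/(t\pm 1)]$ (your $\phi_{\bf a}(\mu(t\pm 1))$), and conclude by comparing these subintervals of $T(p/q)=\phi_{\bf a}((a_n-1,\infty))$ using the monotonicity of $\phi_{\bf a}$ from Remark \ref{remark:order}. Your packaging via the M\"obius map $\mu(u)=a_n+2/u$ and the disjoint inner intervals $(t-1,t+1)$ is just a tidier bookkeeping of the paper's chain of inequalities.
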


\begin{proof}
 Let $p/q = [a_0, a_1, \cdots, a_{n-1}, a_n]$
be the standard continued fraction expansion.
 Then $r/s=[a_0, a_1, \cdots, a_{n-1}, a_n + 2/t]$
for some odd integer $t$ other than $-1$ by Lemma \ref{lemma:child}.

 We show the lemma in the case where $n$ is even.
 (If $n$ is odd, then the order in ${\mathbb R}$ is reversed
by the map $\phi_{\bf a}$ 
with ${\bf a} = (a_{n-1}, a_{n-2}, \cdots, a_0)$.
 However, a similar argument as below shows the lemma.)

 An easy calculation shows that
\newline
$T(r/s)= 
([a_0, a_1, \cdots, a_{n-1}, a_n + 2/(t+1)],\ 
 [a_0, a_1, \cdots, a_{n-1}, a_n + 2/(t-1)])$.
\newline
 (Note that $2/(t-1) = \infty$ when $t=1$.)
 For example, when $t \le -5$,
we have a continued fraction expansion
$r/s=[a_0, a_1, \cdots, a_{n-1}, a_n - 1, 1, (-t-3)/2, 2]$.
 Hence 
$T(r/s) = 
([a_0, a_1, \cdots, a_{n-1}, a_n - 1, 1, (-t-3)/2, \infty],\ 
[a_0, a_1, \cdots, a_{n-1}, a_n - 1, 1, (-t-3)/2, 2-1])$,
which coincides with the above open interval.

 Since
$a_n - 2/2 < a_n -2/4 < a_n - 2/6 < \cdots < a_n 
< \cdots < a_n + 2/4 < a_n+ 2/2 < \infty$, 
Remark \ref{remark:order} implies that 
the territory $T(r/s)$ is contained in
$([a_0, a_1, \cdots, a_{n-1}, a_n -1],\ p/q)$ when $t$ is negative,
and 
in $(p/q,\ [a_0, a_1, \cdots, a_{n-1}, \infty])$ when $t$ is positive,
and the lemma holds.
\end{proof}

\begin{proof}[Proof of Theorem \ref{theorem:tree}]
 We show that ${\mathbb D}_2$ is a tree.
 By Lemma \ref{lemma:connected}, ${\mathbb D}_2$ is connected.
 Hence we have only to show that ${\mathbb D}_2$ contains no cycle.
 Since ${\mathbb D}_2 = {\mathbb D}_{2+} \cup {\mathbb D}_{2-}$
and ${\mathbb D}_{2+} \cap {\mathbb D}_{2-} = \{ 0/1 \}$,
and since ${\mathbb D}_2$ is symmetric 
about the line connecting $0/1$ and $1/0$,
it is sufficient to show 
that ${\mathbb D}_{2+}$ contains no cycle.

 Lemmas \ref{lemma:child} and \ref{lemma:territory}
together imply that
${\mathbb D}_{2(m+1)+}$ retracts to ${\mathbb D}_{2m+}$
for any positive integer $m$.
 Hence ${\mathbb D}_{2k+}$ does not contain a cycle.

 ${\mathbb D}_{2+} = \cup_{i=1}^{\infty} {\mathbb D}_{2i+}$
by Lemma \ref{lemma:connected}.
 If ${\mathbb D}_{2+}$ contained a cycle,
then also ${\mathbb D}_{2k+}$ would contain a cycle
for some positive integer $k$.
\end{proof}

\section{The proof of the main theorem}\label{section:MainTheorem}

\begin{lemma}\label{lemma:FEuler}
 Let $F$ be a surface in standard form
as in Definition \ref{definition:standard},
and $b$ the number of the band sum operations.
 Then,
the Euler characteristic of $F$ is calculated by $\chi(F) = 2- b$,
and hence the crosscap number of $F$ is Cr\,$(F) = b$. 
\end{lemma}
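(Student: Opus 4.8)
The plan is to compute $\chi(F)$ by tracking how the Euler characteristic changes at each stage of the construction in Definition \ref{definition:standard}. I would start from $F_0 = D_1$, a single meridian disk of $V_1$, which has $\chi(D_1) = 1$. Then I would observe that a band sum operation, as described in the definition, replaces a surface $F'$ by a surface $F''$ obtained by attaching a band $N(\beta) \cong \beta \times I$ along two arcs in $\partial F'$ and then pushing the interior into $\mathrm{int}\,M$; attaching a $1$-handle (band) to a surface drops the Euler characteristic by exactly $1$, since $\chi(\beta \times I) = 1$ and the intersection $F' \cap N(\beta) = (\partial \beta) \times I$ is two arcs with $\chi = 2$, so $\chi(F'') = \chi(F') + 1 - 2 = \chi(F') - 1$. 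Iterating over the $b$ band sums gives $\chi(F_b) = 1 - b$, where $F_b$ is the surface in $V_1$ after all the band sums.

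Next I would cap off $\partial F_b$ by the meridian disk $D_2$ of $V_2$ to form the closed surface $F$. Since $\partial F_b$ is a single circle (each non-trivial band sum along an arc connecting both sides of an essential circle yields again a single essential circle, by Remark \ref{remark:EssBandSum}), gluing $F_b$ to $D_2$ along this common boundary circle $S^1$ gives $\chi(F) = \chi(F_b) + \chi(D_2) - \chi(S^1) = (1 - b) + 1 - 0 = 2 - b$. For the crosscap number, I would use the note following Definition \ref{definition:standard}: $F$ is non-orientable (it meets a core of $V_1$ transversally in one point). For a closed non-orientable surface $N_k$ of crosscap number $k$ one has $\chi(N_k) = 2 - k$, so $\chi(F) = 2 - b$ forces $\mathrm{Cr}(F) = b$.

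The only real subtlety, and the step I would be most careful about, is making sure the Euler-characteristic bookkeeping for a single band sum is airtight: one must use the precise definition (the band meets $\partial F'$ in exactly $(\partial\beta)\times I$, two disjoint arcs, and the subsequent isotopy into $\mathrm{int}\,M$ does not change $\chi$) and confirm that the band is genuinely attached along two \emph{distinct} arcs rather than being absorbed, which is exactly what non-triviality guarantees via Remark \ref{remark:EssBandSum}. Everything else is a routine inclusion–exclusion computation for $\chi$ of a union along a circle, plus the standard classification fact $\chi(N_k) = 2-k$. I expect no genuine obstacle here; the lemma is essentially a careful accounting argument.
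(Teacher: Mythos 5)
Your argument is correct and is essentially the paper's own proof written out in detail: the paper simply regards $D_1$ as a $0$-handle, each band as a $1$-handle, and $D_2$ as a $2$-handle, which is exactly the Euler-characteristic bookkeeping ($1 - b + 1 = 2 - b$) you carry out via inclusion--exclusion. Your added care about the band meeting $\partial F'$ in two arcs and the use of $\chi(N_k)=2-k$ fills in steps the paper leaves implicit, but the route is the same.
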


\begin{proof}
 We can regard the meridian disk in $V_1$ as a $0$-handle, 
each band as a $1$-handle, 
the meridian disk in $V_2$ as a $2$-handle. 
\end{proof}

\begin{figure}
\includegraphics[width=8cm]{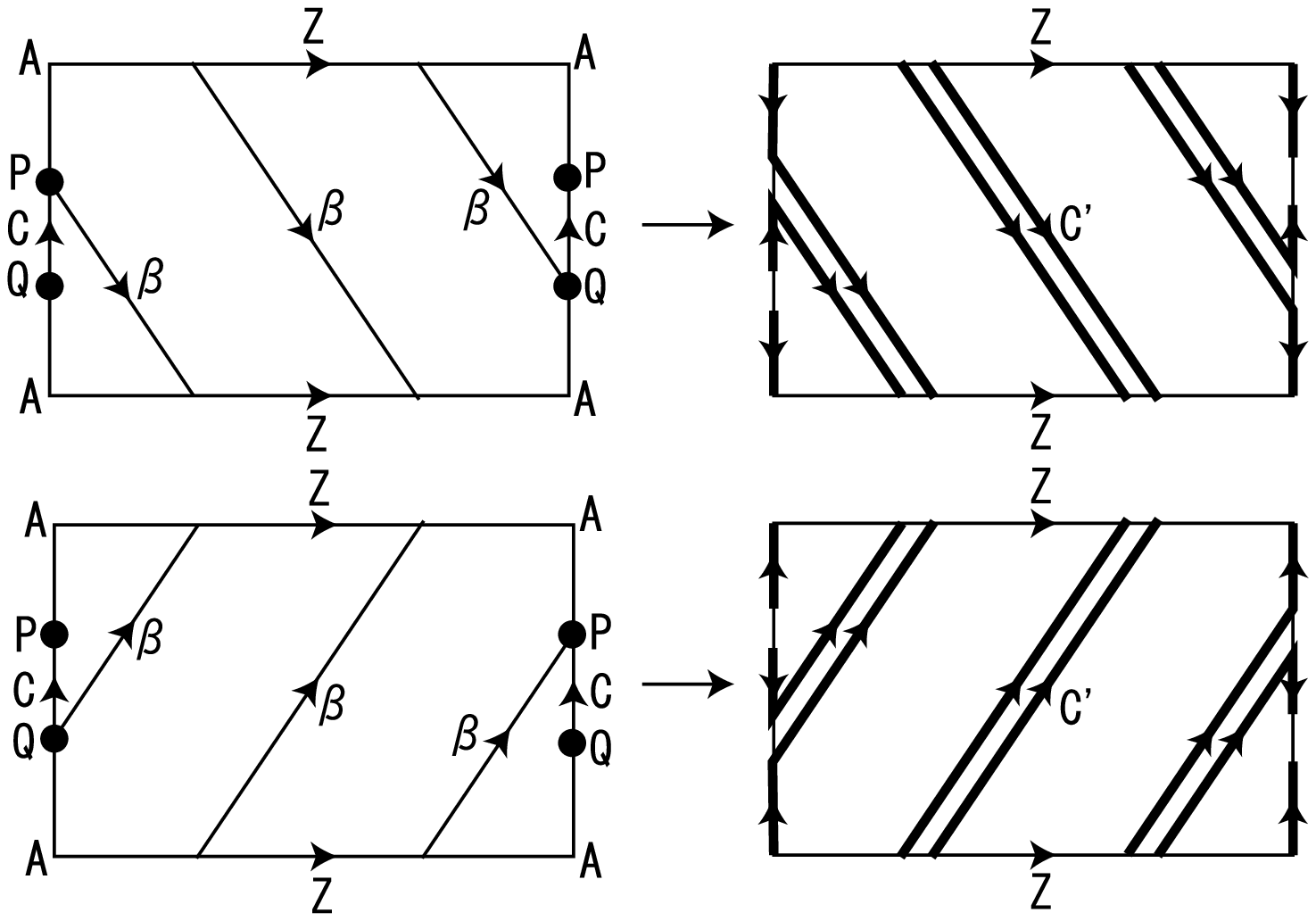}
\caption{}
\label{fig:UniqueBandSum}
\end{figure}

\begin{lemma}\label{lemma:UniqueBandSum}
 Let $V \cong D^2 \times I$ be a solid torus,
$C$ a circle of $(p,q)$-slope on $\partial V$,
and $r/s$ an irreducible fractional number with $d(r/s, p/q)=2$.
 Then there is a unique arc $\beta$
up to ambient isotopy of $V$ fixing $C$ as a set
such that a surgery on $C$ along $\beta$ yields a circle of $(r,s)$-slope.
\end{lemma}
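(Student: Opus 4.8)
The plan is to establish existence and uniqueness separately, exploiting the homogeneity of the mapping class group of the solid torus. For existence, I would first invoke Theorem \ref{thorem:SurgeryOnCircleSlope2}: since $d(r/s,p/q)=2$ means precisely that the $(r,s)$-circle $C'$ and the $(p,q)$-circle $C$ satisfy $C'\cdot C=\pm 2$ (with suitable orientations), that theorem guarantees that \emph{some} arc $\beta$ realizes the surgery taking $C$ to a circle of slope $(r,s)$. So the content of this lemma is really the uniqueness clause.

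For uniqueness, the natural strategy is to reduce to a normal form using the action of the mapping class group of $V$. First I would recall that any two circles of the same slope on $\partial V$ are related by an ambient isotopy of $V$, so we may fix $C$ to be a standard $(p,q)$-curve. Next, given an arc $\beta$ with $\partial\beta=\partial\beta\subset C$ and with $\beta$ connecting the two sides of $C$ (which, by Corollary \ref{corollary:SurgeryOnCircleSlope1} and Theorem \ref{thorem:SurgeryOnCircleSlope2}, is forced if the surgery is to yield a single essential circle), I would analyze $\beta$ up to isotopy rel endpoints in the torus $\partial V$. The key geometric observation is that the torus $\partial V$ cut along $C$ is an annulus $A$, and $\beta$ becomes a properly embedded arc in $A$ joining the two boundary components; such an arc is determined up to isotopy in $A$ by a single integer — how many times it wraps around the annulus. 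Different wrapping numbers give circles $C'$ whose homology classes $[C']=2[\text{core}]\pm (\text{wrapping contribution})$ differ, and I would check that exactly one wrapping number yields homology class equal to $\pm(r,s)$ in $H_1(\partial V)$; the two arcs giving $[C']$ and $-[C']$ are interchanged by the "flip" of $V$ fixing $C$ setwise, and anyway produce the same unoriented slope. Since an essential circle on a torus is determined up to isotopy by its unoriented homology class, this pins down $\beta$ uniquely up to ambient isotopy of $V$ fixing $C$ as a set.

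The main obstacle I anticipate is the bookkeeping in the annulus $A = \partial V \setminus C$: one must be careful that the two endpoints of $\beta$ sit on the two distinct copies of $C$ in $\partial A$ (this is where "$\beta$ connects both sides" is used), that isotopies are taken rel $\partial\beta$ versus allowing $\partial\beta$ to slide along $C$, and that the ambient isotopies of $V$ fixing $C$ setwise (which include Dehn twists along the meridian-parallel annulus in the interior of $V$, plus the orientation-reversing flip) act on the wrapping number in exactly the way needed to collapse all choices producing a given slope to a single orbit. A secondary subtlety is confirming that $\beta$ can indeed be taken to lie in $\partial V$ rather than merely in $V$ — but since $\beta$ is an arc in a surface $H=\partial V$ by the setup of the band-sum/surgery definitions (Definition \ref{definition:SurgeryOnCircle}, Remark \ref{remark:SurgeryBandSum}), this is built into the statement. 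Once the annulus analysis is set up cleanly the uniqueness count is essentially a one-line computation in $H_1(\partial V;\mathbb{Z})\cong\mathbb{Z}^2$.
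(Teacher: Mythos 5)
Your overall strategy is the paper's: existence is quoted from Theorem \ref{thorem:SurgeryOnCircleSlope2}, and uniqueness is obtained by classifying the arcs $\beta$ by an integer winding invariant and checking that distinct values of the invariant yield distinct slopes. (The paper measures the winding as the minimal number of intersections of $\beta$ with a fixed dual circle $Z$ meeting $C$ once, rather than by cutting $\partial V$ along $C$; these encode the same datum.) Two steps of your sketch, however, are not mere bookkeeping and as written would fail.

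First, spanning arcs of the annulus $A$ obtained by cutting $\partial V$ along $C$ are \emph{not} classified by a single integer under the equivalence actually in force. Ambient isotopy of $V$ fixing $C$ setwise lets $\partial\beta$ slide along $C$, and a spanning arc of an annulus with freely sliding endpoints can be completely unwound by dragging one endpoint around its boundary circle. The integer survives only because the two boundary circles of $A$ are the same circle $C$ in $\partial V$, so the sliding endpoint would have to pass through the other endpoint of $\beta$, which an isotopy of the embedded arc forbids. This collision obstruction is exactly why the paper keeps track of the three points $A=Z\cap C$, $P$, $Q$ and their cyclic order on $C$; you flag the rel-endpoints issue as a subtlety to watch, but the argument needs it resolved, not anticipated. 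Second, the remark that the two arcs giving $[C']$ and $-[C']$ are ``interchanged by the flip \ldots and anyway produce the same unoriented slope'' cannot be left as is: the flip of $(V,C)$ reverses the orientation of $H_1(V)$ and so is not isotopic to the identity of $V$, hence is not an ambient isotopy and cannot be used to identify two arcs; and if two non-isotopic arcs really did produce the same unoriented slope, the lemma would be false. What must be shown is that this coincidence never happens. The paper's normalization does this cleanly: orienting each surgered circle $C'$ so that $C'\cdot C=+2$ forces two circles of the same unoriented slope arising this way to have \emph{equal} (not opposite) classes, and then $C'\cdot Z=\pm(2k+1)$, with the sign fixed by which endpoint $\beta$ leaves first, takes pairwise distinct values over the isotopy classes of $\beta$. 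So the flip is neither available nor needed; the missing content is the explicit computation that the winding number is recoverable from the unoriented slope of $C'$.
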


\begin{proof}
 The existence follows from Theorem \ref{thorem:SurgeryOnCircleSlope2}.
 We show the uniqueness.
 We give $C$ an arbitrary orientation.
 There is an oriented circle $Z$ in $\partial V$
such that $Z$ intersects $C$ transversely in a single point, say, $A$,
and $Z \cdot C = + 1$.
 We fix $Z$.
 Let $\beta$ be an arc embedded in $\partial V$
such that $\beta \cap C = \partial \beta$.
 Let $C'$ be the circle obtained from $C$
by a surgery along $\beta$.
 We orient $C'$ so that $C' \cdot C = + 2$,
which induces an orientation of $\beta$.
 It is sufficient to show
that distinct ambient isotopy classes of $\beta$
give distinct intersection numbers $C' \cdot Z$.
 Let $P$ and $Q$ denote endpoints of $\beta$
so that $A, Q, P$ appear in this order on the oriented circle $C$.
 Assume that $\beta$ intersects $Z$ transversely 
in the nimimum number of points
up to ambient isotopy of $V$ fixing $C$ as a set.
 Let $k$ be the minimum number.
 Then $C' \cdot Z = +(2k+1)$ or $-(2k+1)$
according as $\beta$ starts at $P$ or $Q$.
 See Figure \ref{fig:UniqueBandSum},
where the torus $\partial V$ cut along $C \cup Z$ is described. 
\end{proof}

 Note that the argument below shows
the uniqueness of the isotopy class
of geometrically incompressible closed surface 
in $L(p,q)$ with $p$ even.

\begin{proof}[Proof of Theorem \ref{theorem:main}]
 By Proposition \ref{proposition:StandardForm},
$F$ is isotopic to a surface in standard form.
 The transition of slopes by band sum operations
is along an edge-path, say, $\rho$ in ${\Bbb D}_2$
as mentioned in right after Theorem \ref{theorem:BandSumD2}.
 $\rho$ starts at $0/1$ and ends at $p/q$. 

 Since ${\Bbb D}_2$ is a tree by Theorem \ref{theorem:tree}, 
there is a unique minimal edge-path $\gamma (p,q)$
connecting $0/1$ and $p/q$
such that $\gamma (p,q)$ passes each edge of ${\mathbb D}_2$ at most once.
 The union of the edges of $\rho$ contains $\gamma (p,q)$.

 Suppose, for a contradiction, 
that $\rho \ne \gamma (p,q)$.
 Then $\rho$ passes the same edge of ${\mathbb D}_2$ twice consecutively.
 Hence corresponding two band sum operations 
cause mutually dual surgeries on boundary circles on $\partial V_1$
by Lemma \ref{lemma:UniqueBandSum}.
 These two bands together form an annulus
whose core circle bounds a compressing disk $Q$ of the surface $F_i$ in $V_1$
such that $\partial Q$ is non-separating in $F_i$.
 This contradicts that $F$ is geometrically incompressible.

 Thus $\rho = \gamma (p,q)$.
 (This and Lemma \ref{lemma:UniqueBandSum} together
imply that a surface in standard form is unique up to isotopy.)
 Then the theorem follows 
from Lemmas \ref{lemma:FEuler} and \ref{lemma:connected}.
\end{proof}

\begin{proof}[Proof of Theorem \ref{theorem:NewFormula}]
 The number of edges 
in the edge-path $\gamma (p,q)$ in Theorem \ref{theorem:main}
is equal to the number of the band sum operations
and to the minimum crosscap number.

 Let $p/q=[\alpha_n, \alpha_{n-1}, \cdots, \alpha_1, \alpha_0]$ 
be the standard continued fraction expansion.
 If $\alpha'_0 = \alpha_0 = 2\beta_0 + 1$ for some $\beta_0 \in {\mathbb N}$,
then 
$M^{\beta_0} (p/q) =
[\alpha_n, \alpha_{n-1}, \cdots, \alpha_1, 1]
=[\alpha_n, \alpha_{n-1}, \cdots, \alpha_1 +1 ]$.
 We set $\alpha'_1 = \alpha_1 + 1$, which is the last term.
 If $\alpha'_0 = \alpha_0 = 2\beta_0$ for some $\beta_0 \in {\mathbb N}$,
then
$M^{\beta_0} (p/q) =
[\alpha_n, \alpha_{n-1}, \cdots, \alpha_2, \alpha_1, 0]
=[\alpha_n, \alpha_{n-1}, \cdots, \alpha_2, \alpha_1 + 1/0 ]
=[\alpha_n, \alpha_{n-1}, \cdots, \alpha_2, \infty ]$.
 We set $\alpha'_1 = \infty$.

 When $\alpha'_1 = \infty$,
$[\alpha_n, \alpha_{n-1}, \cdots, \alpha_2, \infty]
=[\alpha_n, \alpha_{n-1}, \cdots, \alpha_2 + 1/\infty]
=[\alpha_n, \alpha_{n-1}, \cdots, \alpha_2]$.
 We set $\beta_1 = 0$ and $\alpha'_2 = \alpha_2$.
 When $\alpha'_1 = 2\beta_1 + 1$ $(\beta_1 \in {\mathbb N})$,
we set $\alpha'_2 = \alpha_2 + 1$.
 When $\alpha'_1 = 2\beta_1$ $(\beta_1 \in {\mathbb N})$,
we set $\alpha'_2 = \infty$.

 We repeat operations of going up to the mother as above.
 Let $[\alpha_n, \alpha_{n-1}, \cdots, \alpha_i, \alpha'_{i-1}]$
be the continued fraction expansion of length $n-(i-2)$
which we first reach.
 Then we set 
$\alpha'_i = 
\alpha_i$ (when $\alpha'_{i-1}=\infty$),
$\ \alpha_i+1$ 
(when $\alpha'_{i-1}= 2 \beta_{i-1} +1$ 
for some $\beta_{i-1} \in {\mathbb N}\cup \{ 0 \}$) and
$\ \infty$ 
(when $\alpha'_{i-1}= 2 \beta_{i-1}$ 
for some $\beta_{i-1} \in {\mathbb N}$)
in a similar way as above.

 When we first reach the continued fraction of length one, say, $[\alpha'_n]$,
the non-negative integer $\alpha'_n$ is even
because it is a vertex of ${\mathbb D}_2$.
 Hence we set $\alpha'_n = 2\beta_n$.
 Then $M^{\beta_n} (\alpha'_n) = 0/1$.
 Thus the number of operations of going up to the mother from $p/q$ to $0/1$
is $\sum_{i=0}^n \beta_i$,
which is equal to the crosscap number.
\end{proof}

\section*{Acknowledgement}

The author thanks Koya Shimokawa 
for a helpful comment on the proof of Theorem \ref{theorem:tree},
and her advisor, Chuichiro Hayashi, 
for his encouragement and useful discussions.

\bibliographystyle{amsplain}

\begin{thebibliography}{30}

\bibitem{BW} G. E. Bredon and J. W. Wood, 
\textit{Non-orientable surfaces in orientable $3$-manifolds}, 
Invent. Math. 7 (1969), 83--110.

\bibitem{F} C. Frohman,
\textit{One-sided incompressible surfaces in Seifert fibered spaces},
Topology Appl. 23 (1986), 103--116.

\bibitem{HT} A. Hatcher and W. Thurston,
\textit{Incompressible surfaces in $2$-bridge knot complements}, 
Invent. Math. 79 (1985), 225--246.

\bibitem{JRT} W. Jaco, J. H. Rubinstein and S. Tillmann,
\textit{Minimal triangulations for an infinite family of lens spaces},
to appear in Topology Appl.

\bibitem{R1} J. H. Rubinstein,
\textit{One-sided Heegaard splittings of $3$-manifolds},
Pacific J.~Math. 76 (1978), 185--200.

\bibitem{R2} J. H. Rubinstein,
\textit{On $3$-manifolds that have finite fundamental group 
and contain Klein bottles},
Trans.~Amer.~Math.~Soc. 251 (1979), 129--137.

\bibitem{R3} J. H. Rubinstein,
\textit{Nonorientable surfaces in some non-Haken $3$-manifolds},
Trans.~Amer. Math.~Soc. 270 (1982), 503--524.

\bibitem{R4} J. H. Rubinstein and J. S. Birman,
\textit{One-sided Heegaard splittings and the homeotopy groups 
of some $3$-manifolds},
Proc.~London Math.~Soc. 49 (1984), 517--536.
\end{thebibliography}

\medskip

\noindent
Miwa Iwakura:
Department of Mathematical and Physical Sciences,
Faculty of Science, Japan Women's University,
2-8-1 Mejirodai, Bunkyo-ku, Tokyo, 112-8681, Japan.
miwamathematics@yahoo.co.jp

\end{document}